\documentclass[11pt,reqno]{amsart}
\usepackage{graphicx}
\usepackage{amssymb,amsmath}
\usepackage{amsthm}
\usepackage{color,graphicx}
\usepackage{hyperref}
\usepackage{color}
\usepackage{epstopdf}
\usepackage{caption}
\usepackage{subcaption}
\usepackage[dvips]{epsfig}

\setlength{\textwidth}{15cm}
\setlength{\textheight}{22cm}
\addtolength{\oddsidemargin}{-1cm}
\addtolength{\evensidemargin}{-1cm}

\newcommand{\be}{\begin{equation}}

\newcommand{\ee}{\end{equation}}

\newcommand{\R}{{\mathbb R}}


\numberwithin{equation}{section}
\numberwithin{figure}{section}

\newtheorem{theorem}{Theorem}[section]
\newtheorem{proposition}[theorem]{Proposition}
\newtheorem{remark}[theorem]{Remark}
\newtheorem{lemma}[theorem]{Lemma}
\newtheorem{corollary}[theorem]{Corollary}
\newtheorem{conjecture}[theorem]{Conjecture}
\newtheorem{definition}[theorem]{Definition}


\begin{document}

\vglue-1cm \hskip1cm
\title[Variational characterization of periodic waves]{New variational characterization
of periodic waves in the fractional Korteweg--de Vries equation}

\author{F\'{a}bio Natali}
\address[F. Natali]{Departamento de Matem\'{a}tica - Universidade Estadual de Maring\'{a}, Avenida Colombo 5790, CEP 87020-900, Maring\'{a}, PR, Brazil}
\email{fmanatali@uem.br}

\author{Uyen Le}
\address[U. Le]{Department of Mathematics and Statistics, McMaster University,
Hamilton, Ontario, Canada, L8S 4K1}
\email{leu@mcmaster.ca}

\author{Dmitry E. Pelinovsky}
\address[D. Pelinovsky]{Department of Mathematics and Statistics, McMaster University,
Hamilton, Ontario, Canada, L8S 4K1}
\email{dmpeli@math.mcmaster.ca}
\address[D. Pelinovsky]{Department of Applied Mathematics, Nizhny Novgorod State Technical University, 603950, Russia}

\subjclass[2000]{76B25, 35Q51, 35Q53.}

\keywords{Periodic traveling waves, Existence, Spectral stability, Fractional Korteweg--de Vries equation}

\maketitle

\begin{abstract}
Periodic waves in the fractional Korteweg--de Vries equation have been previously characterized
as constrained minimizers of energy subject to fixed momentum and mass. Here we characterize these periodic waves
as constrained minimizers of the quadratic form of energy subject to fixed cubic part of energy and the zero mean.
This new variational characterization allows us to unfold the existence region of travelling periodic waves
and to give a sharp criterion for spectral stability of periodic waves with respect to perturbations of the same period.
The sharp stability criterion is given by the monotonicity of the map from the wave speed to
the wave momentum similarly to the stability criterion for solitary waves.
 \end{abstract}

\section{Introduction}

One popular model for wave dynamics in a shallow fluid is expressed by
the fractional Korteweg-de Vries (KdV) equation \cite{BBM}, which is written in the form:
\begin{equation}\label{rDE}
u_t + 2 u u_x-(D^{\alpha}u)_x=0,
\end{equation}
where $u(t,x)$ is a real function of $(t,x) \in \R\times\R$ and $D^{\alpha}$ represents the fractional derivative
defined via Fourier transform as
\begin{equation*}
\widehat{D^{\alpha}g}(\xi) = |\xi|^{\alpha} \widehat{g}(\xi), \quad \xi \in \mathbb{R}.
\end{equation*}
In what follows we consider the periodic traveling waves with the normalized period $T = 2\pi$,
for which $x$ is restricted on $\mathbb{T} := [-\pi,\pi]$ and $\xi$ is restricted on $\mathbb{Z}$.

The fractional KdV equation (\ref{rDE}) admits formally the following conserved quantities:
\begin{equation}\label{Eu}
E(u) = \frac{1}{2} \int_{-\pi}^{\pi} (D^{\frac{\alpha}{2}}u)^2 - \frac{1}{3} \int_{-\pi}^{\pi} u^3 dx,
\end{equation}
\begin{equation}\label{Fu}
F(u)=\frac{1}{2}\int_{-\pi}^{\pi}u^2dx,
\end{equation}
and
\begin{equation}\label{Mu}
M(u)=\int_{-\pi}^{\pi}u\,dx,
\end{equation}
which have meaning of energy, momentum, and mass respectively.

Local well-posedness of the Cauchy problem for the fractional KdV equation (\ref{rDE}) was proven in \cite{ab1}
for the initial data in Sobolev space $H^{s}(\mathbb{R})$ or $H^s(\mathbb{T})$ for $s \geq \frac{3}{2}$.
Local well-posedness in $H^s(\R)$ for $s > \frac{3}{2} - \frac{3}{8} \alpha$ was proven in \cite{SautPilod},
where the authors also showed existence of weak global solutions in energy space $H^{\frac{\alpha}{2}}(\mathbb{R})$
for $\alpha > \frac{1}{2}$ and for $\alpha = \frac{1}{2}$ and small data. More recently, local well-posedness in $H^s(\mathbb{R})$
was proven in \cite{Molinet} for $\alpha > 0$ and $s > \frac{3}{2} - \frac{5}{4} \alpha$. Together with the conservation of energy,
the latter result implies global well-posedness in the energy space $H^{\frac{\alpha}{2}}(\mathbb{R})$ for $\alpha > \frac{6}{7}$.
Traveling solitary waves were characterized as minimizers of energy subject to the fixed momentum
in \cite{SautSoliton} for $\alpha \in \left(\frac{1}{2},1\right)$ and in \cite{Albert} for $\alpha \geq 1$.

Existence and stability of traveling periodic waves were analyzed by using perturbative \cite{J},
variational \cite{Bronski,BC,hur}, and fixed-point \cite{C} methods.
From the variational point of view, the traveling periodic waves are characterized as
constrained minimizers of energy $E(u)$ subject to fixed momentum $F(u)$ and mass $M(u)$
for every $\alpha \in \left(\frac{1}{3},2\right]$ \cite{hur}.
Spectral stability of periodic waves with respect to perturbations of the same period
follows from computations of eigenvalues of a $2$-by-$2$ matrix involving derivatives
of momentum and mass with respect to two parameters of the periodic waves, see
\cite{DK,haragus} for review.

The following two recent works are particularly important in the context of the present study.
In \cite{lepeli}, perturbative and fixed-point arguments for single-lobe
periodic waves were reviewed and a threshold was found on bifurcations of
the small-amplitude periodic waves at $\alpha = \alpha_0$, where
$$
\alpha_0 := \frac{\log 3}{\log 2} - 1 \approx 0.585.
$$
This threshold separates the supercritical pitchfork bifurcation of single-lobe periodic solutions from the constant solution
for $\alpha > \alpha_0$ and the subcritical pitchfork bifurcation for $\alpha < \alpha_0$.
It is also confirmed in Lemmas 2.2 and 2.3 of \cite{lepeli} that the small-amplitude periodic
waves are constrained minimizers of energy for $\alpha > \alpha_0$ and $\alpha < \alpha_0$
subject to fixed momentum and mass, although the count of negative eigenvalues of the associated Hessian operator
and the $2$-by-$2$ matrix of constraints is different between the two cases.

In \cite{stefanov}, the positive single-lobe periodic waves were constructed by minimizing the
energy $E(u)$ subject to only one constraint of the fixed momentum $F(u)$. It was shown that for every $\alpha \in \left(
\frac{1}{2},2\right]$ and for every positive value of the fixed momentum each such minimizer is
degenerate only up to the translation symmetry and is spectrally stable.
No derivatives of the momentum with respect to Lagrange multipliers is used in \cite{stefanov}.

The main purpose of this work is to develop a new variational characterization of the
periodic waves in the fractional KdV equation (\ref{rDE}). These periodic waves are
constrained minimizers of the quadratic part of the energy $E(u)$
subject to the fixed cubic part of the energy $E(u)$ and the zero mean value,
see \cite{Levand} for a similar approach in the context of the fifth-order KdV equation.
The existence region of the periodic waves with the zero mean
for $\alpha$ near $\alpha_0$ is unfolded in the new variational characterization.
Moreover, spectral stability of periodic waves with respect to perturbations of the same period
is obtained from the sharp criterion of monotonicity of the map from the wave speed to
the wave momentum similarly to the stability criterion for solitary waves, see
\cite{bona2,kap,lin,Pel} for review.

Let us now explain the main formalism for existence and stability of traveling periodic waves.
A traveling wave solution to the fractional KdV equation \eqref{rDE} is a solution of the form
$u(t,x) = \psi(x-c t)$, where $c$ is a real constant representing the wave speed and
$\psi(x) : \mathbb{T} \to\R$ is a smooth $2\pi$-periodic function satisfying the stationary equation:
\begin{equation}\label{ode-wave}
	D^{\alpha}\psi + c\psi - \psi^2 + b = 0,
\end{equation}
where $b$ is another real constant obtained from integrating equation (\ref{rDE}) in $x$.
If we require that $\psi(x) : \mathbb{T} \to\R$ be a periodic function with the zero mean value, then $b = b(c)$ is defined
at an admissible solution $\psi$ by
\begin{equation}
\label{b-c}
b(c) := \frac{1}{2\pi} \int_{-\pi}^{\pi}\psi^2 dx.
\end{equation}
The solution $\psi$ also depends on the speed parameter $c$ but we often omit
explicit reference to this dependence for notational simplicity.
The momentum $F(u)$ and mass $M(u)$ computed at the solution $\psi$
are given by
\begin{equation}
\label{correspondence-momentum}
F(\psi) = \pi b(c), \quad M(\psi) = 0.
\end{equation}
Note that the choice (\ref{b-c}) is precisely the relation excluded from
the statement of Theorem 1 in \cite{stefanov}. The relation (\ref{b-c}) closes the
stationary equation (\ref{ode-wave}) as the boundary-value problem
\begin{equation}\label{ode-bvp}
	D^{\alpha}\psi + c\psi = \Pi_0 \psi^2, \quad \psi \in H^{\alpha}_{\rm per}(\mathbb{T}),
\end{equation}
where $\Pi_0 f := f - \frac{1}{2\pi} \int_{-\pi}^{\pi} f(x) dx$ is the projection
operator reducing the mean value of $2\pi$-periodic functions to zero.

Among all possible periodic waves satisfying the boundary-value problem (\ref{ode-bvp}),
we are interested in the \textit{single-lobe} periodic waves, according to the following definition.

\begin{definition}\label{defilobe}
We say that the periodic wave satisfying the boundary-value problem (\ref{ode-bvp})
has a single-lobe profile $\psi$ if there exist only one maximum and minimum of $\psi$ on $\mathbb{T}$.
Without the loss of generality, the maximum of $\psi$ is placed at $x=0$.
\end{definition}

The stationary equation (\ref{ode-wave}) is the Euler--Lagrange equation for the augmented Lyapunov functional,
\begin{equation}\label{lyafun}
	G(u)=E(u)+c F(u)+  b M(u),
\end{equation}
so that $G'(\psi)=0$. Computing the Hessian operator from (\ref{lyafun}) yields the linearized operator around the wave $\psi$
\begin{equation}\label{operator}
\mathcal{L} := G''(\psi) = D^{\alpha} + c - 2\psi.
\end{equation}
The linearized operator $\mathcal{L}$ determines the spectral and linear stability of the periodic wave
with the profile $\psi$. By using $u(t,x) = \psi(x-ct) + v(t,x-ct)$ and substituting equation (\ref{ode-wave}) for $\psi$,
we obtain
\begin{equation}
\label{vequ}
v_t + 2 v v_x + 2 (\psi v)_x - c v_x - D^{\alpha}v_x=0.
\end{equation}
Replacing the nonlinear equation (\ref{vequ}) by its linearization at the zero solution
yields the linearized stability problem
\begin{equation}\label{vlinear}
v_t = \partial_x \mathcal{L} v,
\end{equation}
where $\mathcal{L}$ is given by (\ref{operator}).
Since $\psi$ depends only on $x$, separation of variables
in the form $v(t,x) = e^{\lambda t} \eta(x)$ with some $\lambda \in \mathbb{C}$
and $\eta(x) : \mathbb{T} \to \mathbb{C}$ reduces the linear equation (\ref{vlinear})
to the spectral stability problem
\begin{equation}
\label{spectral-stab}
\partial_x \mathcal{L} \eta=\lambda \eta.
\end{equation}
The spectral stability of the periodic wave $\psi$ is defined as follows.

\begin{definition}
\label{defspe} The periodic wave $\psi \in H^{\alpha}_{\rm per}(\mathbb{T})$ is said to be spectrally stable
with respect to perturbations of the same period if
$\sigma(\partial_x \mathcal{L}) \subset i\mathbb R$ in $L^2_{\rm per}(\mathbb{T})$.
Otherwise, that is, if $\sigma(\partial_x \mathcal{L})$ in $L^2_{\rm per}(\mathbb{T})$
contains a point $\lambda$ with $\mbox{\rm  Re}(\lambda)>0$, the periodic wave $\psi$ is said to be {\it spectrally unstable}.
\end{definition}

In the periodic case, since $\partial_x$ is not a one-to-one operator,
the classical spectral stability theory as the one in \cite{grillakis1}  can not be applied.
To overcome this difficulty, a constrained spectral problem was
considered in \cite{haragus}:
\begin{equation}\label{modspecp1}
\partial_x \mathcal{L}\big|_{X_0}\eta=\lambda \eta,
\end{equation}
where $\mathcal{L}\big|_{X_0} = \Pi_0 \mathcal{L} \Pi_0$ is
a restriction of $\mathcal{L}$ on the closed subspace $X_0$ of
periodic functions with zero mean,
\begin{equation}\label{zero}
X_0=\Big\{f\in L^2_{\rm per}(\mathbb{T}): \quad \int_{-\pi}^{\pi} f(x) dx = 0 \Big\}.
\end{equation}
A specific Krein-Hamiltonian index formula for
the constrained spectral problem (\ref{modspecp1}) determines a sharp criterion
for spectral stability of periodic waves \cite{AN1,DK,harPel,Pel}.
This theory has been applied to the generalized KdV equation of the form:
\begin{equation}\label{gkdv}
u_t+u^pu_x+u_{xxx}=0,
\end{equation}
where $p\in\mathbb N$. For nonlocal evolution equations, spectral stability of periodic traveling waves was studied in
\cite{ACN} in the context of the Intermediate Long-Wave (ILW) equation,
\begin{equation}\label{ILW}
u_t+uu_x+\upsilon^{-1}u_x-(\mathcal{T_\upsilon}u)_{xx}=0,\ \ \ \ \ \ \upsilon>0,
\end{equation}
where $\mathcal{T_{\upsilon}}$ is the the linear operator is defined by
$$
\mathcal{T_\upsilon}u(x) = \text{p.v.} \int_{-\pi}^{\pi}
\Gamma_{\upsilon}(x-y) u(y)dy,
$$
with $\Gamma_{\upsilon}(\xi) = \frac{1}{2\pi i} \sum_{n\neq 0} \coth(
n\upsilon)\, e^{i n\xi}$. In the limit
$\upsilon \to 0$, the ILW equation reduces to the KdV equation (\ref{gkdv})
with $p = 1$, whereas in the limit $\upsilon \to \infty$,
the ILW equation reduces to the Benjamin--Ono (BO) equation.
Alternatively, these two limiting cases coincide with the fractional
KdV equation (\ref{rDE}) with $\alpha = 2$ and $\alpha = 1$ respectively.
Stability of periodic waves for these limiting cases were previously considered
in \cite{AN} by exploring the fact that the corresponding periodic waves
are positive with positive Fourier transform. In \cite{ACN}, periodic waves of the ILW equation
with $\upsilon \in (0,\infty)$ were considered under the zero mean constraint, whereas
Galilean transformation was used to connect periodic waves with zero mean
and periodic wave with positive Fourier transform.

Another important case of the fractional KdV equation (\ref{rDE})
is the reduced Ostrovsky equation
\begin{equation}
\label{redOst}
(u_t + u u_x)_x = u
\end{equation}
which corresponds to $\alpha = -2$. Periodic waves of the
reduced Ostrovsky equation naturally have zero mean
and smooth periodic waves exist in an admissible interval of the wave
speeds for $\alpha = -2$ \cite{GeyPel1} and more generally
for every $\alpha < -1$ \cite{BD}. Spectral stability
of such periodic waves with zero mean was obtained for $\alpha = -2$ in \cite{GeyPel1}
from a sharp criterion given by monotonicity of
the map from the wave speed to the wave momentum.
Interesting enough, the family of smooth periodic waves
terminates for every $\alpha < -1$ at a peaked periodic wave \cite{BD,GeyPel2}
and the peaked periodic wave was shown to be linearly and spectrally
unstable \cite{GeyPel2,GeyPel3}.

The following theorem presents the main results of this paper.
\begin{theorem}
\label{theorem-main}
Fix $\alpha \in \left(\frac{1}{3},2 \right]$. For every $c_0 \in (-1,\infty)$, there exists a solution to the boundary-value
problem (\ref{ode-bvp}) with the even, single-lobe profile $\psi_0$, which is obtained from a constrained
minimizer of the following variational problem:
\begin{equation}
\label{minimizer}
\inf_{u\in H_{\rm per}^\frac{\alpha}{2}(\mathbb{T})} \left\{ \int_{-\pi}^{\pi} \left[ (D^{\frac{\alpha}{2}}u)^2 + c_0 u^2 \right] dx : \quad
\int_{-\pi}^{\pi} u^3 dx = 1, \quad \int_{-\pi}^{\pi} u dx = 0 \right\}.
\end{equation}
Assuming that ${\rm Ker}(\mathcal{L} |_{X_0}) = {\rm span}(\partial_x \psi_0)$ for the linearized
operator $\mathcal{L}$ at $\psi_0$, there exists a $C^1$ mapping $c \mapsto \psi(\cdot,c) \in H^{\alpha}_{\rm per}(\mathbb{T})$
in a local neighborhood of $c_0$ such that $\psi(\cdot,c_0) = \psi_0$ and the spectrum of $\mathcal{L}$ in $L^2_{\rm per}(\mathbb{T})$
includes
\begin{itemize}
\item a simple negative eigenvalue and a simple zero eigenvalue if $c_0 + 2 b'(c_0) > 0$,
\item a simple negative eigenvalue and a double zero eigenvalue if $c_0 + 2 b'(c_0) = 0$,
\item two negative eigenvalues and a simple zero eigenvalue if $c_0 + 2 b'(c_0) < 0$.
\end{itemize}
The periodic wave $\psi_0$ is spectrally stable if $b'(c_0) \geq 0$ and is spectrally unstable
with exactly one unstable (real, positive) eigenvalue of $\partial_x \mathcal{L}$ in $L^2_{\rm per}(\mathbb{T})$ if $b'(c_0) < 0$.
\end{theorem}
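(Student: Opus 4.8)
The plan is to split the statement into three parts: (i) existence of a single-lobe even minimizer of \eqref{minimizer}, (ii) the $C^1$ continuation in $c$ together with the spectral count of $\mathcal{L}$ in terms of the sign of $c_0 + 2b'(c_0)$, and (iii) the spectral stability dichotomy for $\partial_x\mathcal{L}$ driven by the sign of $b'(c_0)$.

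For part (i), I would run the standard direct method on the functional $B_{c_0}(u) = \int_{-\pi}^{\pi}[(D^{\alpha/2}u)^2 + c_0 u^2]\,dx$ over the constraint set $\{u \in H^{\alpha/2}_{\rm per}(\mathbb{T}) : \int u^3 = 1,\ \int u = 0\}$. On the zero-mean subspace $X_0$, $B_{c_0}$ is coercive and equivalent to the $H^{\alpha/2}$-norm for every $c_0 > -1$ (the constant $-1$ being the bottom of the spectrum of $D^\alpha$ restricted to $X_0$, whose lowest nonzero frequency gives eigenvalue $1$). The cubic constraint is weakly continuous by the compact Sobolev embedding $H^{\alpha/2}_{\rm per}\hookrightarrow L^3$ valid for $\alpha > 1/3$. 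A minimizing sequence is therefore bounded, has a weakly convergent subsequence whose limit still satisfies $\int u^3 = 1$ and $\int u = 0$, and by weak lower semicontinuity realizes the infimum; the infimum is positive because $B_{c_0}$ is a positive-definite quadratic form on $X_0$. The Euler--Lagrange equation reads $D^\alpha\psi + c_0\psi = \mu \Pi_0(\psi^2)$ for a Lagrange multiplier $\mu$, and pairing with $\psi$ shows $\mu = B_{c_0}(\psi) > 0$; rescaling $\psi \mapsto \mu\psi$ (which changes $c_0$ not at all but sets the multiplier to $1$, up to the normalization of the cubic integral which we are free to fix) produces a solution of \eqref{ode-bvp}. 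To upgrade the minimizer to an \emph{even, single-lobe} profile I would symmetrize: replace $\psi$ by its symmetric decreasing rearrangement on $\mathbb{T}$, using that $D^{\alpha/2}$ has a rearrangement (Pólya--Szegő type) inequality on the torus for $\alpha \in (0,2]$ — equivalently the kernel of $(D^\alpha + c_0)^{-1}$ on $X_0$ is symmetric and monotone, so that the convolution formulation of the equation forces the fixed point to be even and monotone on each half-period. Smoothness of $\psi$ then follows by bootstrapping in the equation $D^\alpha\psi = \Pi_0\psi^2 - c_0\psi$.

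For parts (ii) and (iii) I would invoke the standard machinery for the linearized operator $\mathcal{L} = D^\alpha + c - 2\psi$, now viewed as an operator with a given constrained kernel ${\rm Ker}(\mathcal{L}|_{X_0}) = {\rm span}(\partial_x\psi_0)$. Since $\psi_0$ is an even single-lobe state, $\partial_x\psi_0$ has exactly one sign change on a half-period, so by Sturm--Oscillation-type arguments for the fractional operator (as developed for the fractional KdV in, e.g., the references \cite{hur,lepeli,stefanov}) the operator $\mathcal{L}$ on $L^2_{\rm per}(\mathbb{T})$ has exactly one negative eigenvalue and a simple zero eigenvalue (spanned by $\partial_x\psi_0$). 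Differentiating \eqref{ode-bvp} in $c$ gives $\mathcal{L}\,\partial_c\psi = -\psi + \text{(mean terms)}$, and using \eqref{correspondence-momentum}, $F(\psi) = \pi b(c)$, one computes the key quadratic form $\langle \mathcal{L}^{-1}\psi, \psi\rangle$ (restricted appropriately to the zero-mean subspace and modulo the kernel) and shows it equals a negative multiple of $c + 2b'(c)$; the $C^1$ dependence $c\mapsto\psi(\cdot,c)$ in a neighborhood of $c_0$ comes from the implicit function theorem applied to $F(\psi,c) := D^\alpha\psi + c\psi - \Pi_0\psi^2 = 0$ on $H^\alpha_{\rm per}(\mathbb{T})\cap X_0$, whose linearization $\mathcal{L}|_{X_0}$ is invertible on the orthogonal complement of $\partial_x\psi_0$ — this is exactly where the nondegeneracy hypothesis ${\rm Ker}(\mathcal{L}|_{X_0}) = {\rm span}(\partial_x\psi_0)$ is needed. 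The three cases in the theorem then follow from perturbation theory for the eigenvalue of $\mathcal{L}$ that crosses zero as $\partial_c\psi$ enters or leaves the kernel: when $c_0 + 2b'(c_0) = 0$ the vector $\partial_c\psi$ becomes orthogonal to the range obstruction and the zero eigenvalue of $\mathcal{L}$ becomes double; its sign on either side governs whether a second negative eigenvalue appears.

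Finally, for the spectral stability dichotomy I would apply the Krein--Hamiltonian index formula for $\partial_x\mathcal{L}|_{X_0}$ (see \cite{AN1,DK,harPel,Pel}): the number of unstable eigenvalues of $\partial_x\mathcal{L}$ equals $n(\mathcal{L}|_{X_0}) - n_0$, where $n(\mathcal{L}|_{X_0})$ is the number of negative eigenvalues of $\mathcal{L}$ restricted to $X_0$ and $n_0$ is the number of nonpositive directions contributed by the constraint/kernel structure, the latter encoded by the sign of the $1\times 1$ matrix built from $\langle\mathcal{L}^{-1}\psi,\psi\rangle$, hence by the sign of $c_0 + 2b'(c_0)$ combined with the sign of $b'(c_0)$ from \eqref{correspondence-momentum}. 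Tracking the bookkeeping through the three cases above yields: zero unstable eigenvalues precisely when $b'(c_0) \geq 0$, and exactly one real positive eigenvalue when $b'(c_0) < 0$. The main obstacle I anticipate is the careful index count in the borderline case $c_0 + 2b'(c_0) = 0$ (the double zero eigenvalue of $\mathcal{L}$), where one must verify that the extra kernel direction of $\partial_x\mathcal{L}$ does not produce a Jordan block generating instability — this requires computing the relevant solvability/pairing condition and showing it has the sign consistent with $b'(c_0)\geq 0$; establishing the fractional Pólya--Szegő / rearrangement step in part (i) on the torus is the other technical point that needs care, though it is available in the cited literature for $\alpha\in(0,2]$.
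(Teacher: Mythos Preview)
Your part (i) and the $C^1$ continuation in $c$ via the implicit function theorem match the paper closely. The genuine gap is in the spectral count of $\mathcal{L}$.

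You assert that Sturm oscillation for the fractional operator gives ``exactly one negative eigenvalue and a simple zero eigenvalue'' for $\mathcal{L}$. It does not. The oscillation result from \cite{hur} says only that an eigenfunction for the $n$-th eigenvalue has at most $2(n-1)$ sign changes; since $\partial_x\psi_0$ has exactly two sign changes on $\mathbb{T}$, zero may be the second \emph{or} the third eigenvalue of $\mathcal{L}$, i.e.\ oscillation alone yields only $n(\mathcal{L})\in\{1,2\}$. Indeed the very statement you are proving asserts $n(\mathcal{L})=2$ when $c_0+2b'(c_0)<0$, so your starting claim contradicts the conclusion. What you are missing is the mechanism the paper actually uses: the \emph{new} variational characterization supplies $\mathcal{L}\big|_{\{1,\psi_0^2\}^{\perp}}\geq 0$ (two constraints, not one), and then a $2\times 2$ constraint matrix $P(0)$ built from $\langle\mathcal{L}^{-1}1,1\rangle$, $\langle\mathcal{L}^{-1}1,\psi_0^2\rangle$, $\langle\mathcal{L}^{-1}\psi_0^2,\psi_0^2\rangle$ is computed explicitly via the relations $\mathcal{L}\psi_0=-\psi_0^2-b(c_0)$, $\mathcal{L}1=c_0-2\psi_0$, $\mathcal{L}\partial_c\psi=-\psi_0-b'(c_0)$. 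Its determinant equals $-4\pi^2\gamma(c_0)/(c_0+2b'(c_0))$ with $\gamma(c_0)>0$, and the index formula $n(\mathcal{L})=n(\mathcal{L}|_{\{1,\psi_0^2\}^\perp})+n_0+z_0$ then yields the trichotomy. Your description in terms of ``$\partial_c\psi$ entering or leaving the kernel'' is also off: the extra kernel vector at $c_0+2b'(c_0)=0$ is $1-2\partial_c\psi$, not $\partial_c\psi$.

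For part (iii) the paper runs the \emph{same} constraint-counting argument a second time, now with the physical constraints $\{1,\psi_0\}$ relevant to stability, obtaining a second $2\times 2$ matrix $D(0)$ whose determinant is $-2\pi^2 b'(c_0)/(c_0+2b'(c_0))$; combining $n_0(D),z_0(D)$ with the already-established value of $n(\mathcal{L})$ from the first computation gives $n(\mathcal{L}|_{\{1,\psi_0\}^\perp})=0$ iff $b'(c_0)\geq 0$ in all three cases. Your reference to a ``$1\times 1$ matrix built from $\langle\mathcal{L}^{-1}\psi,\psi\rangle$'' would be appropriate if $n(\mathcal{L})=1$ were known and only the momentum constraint mattered, but here both the mass constraint and the possibility $n(\mathcal{L})=2$ are in play, so the $2\times2$ computation is essential.
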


\begin{remark}
\label{remark-main}
If $\mathcal{L}$ has a simple negative eigenvalue, we show that the assumption
$${\rm Ker}(\mathcal{L} |_{X_0}) = {\rm span}(\partial_x \psi_0)$$ in Theorem \ref{theorem-main}
is satisfied. Moreover, we show that if this assumption
is not satisfied, then the periodic wave with the profile $\psi_0$ is spectrally unstable but
$b(c)$ is not differentiable at $c_0$.
\end{remark}

In Section \ref{sec-existence}, we prove existence of solutions
of the boundary-value problem (\ref{ode-bvp}) with an even, single-lobe profile $\psi$
in the sense of Definition \ref{defilobe} for every fixed
$\alpha \in \left(\frac{1}{3},2 \right]$ and $c \in (-1,\infty)$. This result is obtained
from the existence of minimizers in the constrained variational problem (\ref{minimizer})
at every fixed $c_0 \in (-1,\infty)$ using classical tools of calculus of variations in the compact domain $\mathbb{T}$.
Furthermore, we prove with the help of Lagrange multipliers that each constrained minimizer in $H^{\alpha/2}_{\rm per}(\mathbb{T})$
yields a proper solution $\psi_0$ to the boundary-value problem (\ref{ode-bvp})
for the same $c_0$. Moreover, the solution $\psi_0$ is smooth in $H^{\infty}_{\rm per}(\mathbb{T})$.
The first assertion of Theorem \ref{theorem-main} is proven from
Theorem \ref{minlem}, Corollary \ref{cor-existence}, and Proposition \ref{regularity}.

In Section \ref{sec-spectrum}, we characterize the number and multiplicity of negative
and zero eigenvalues of the linearized operator $\mathcal{L}$ in $L^2_{\rm per}(\mathbb{T})$
The linearized operator $\mathcal{L}$ is considered for the periodic wave with the profile
$\psi_0$ and the speed $c_0$.
We find in Lemma \ref{teoexist} a sharp condition ${\rm Ker}(\mathcal{L} |_{X_0}) = {\rm span}(\partial_x \psi_0)$
for continuation of the zero-mean solution $\psi$ to the boundary-value problem (\ref{ode-bvp})
as a smooth family with respect to parameter $c$ in a local neighborhood of $c_0$.
For each value of $c_0 \in (-1,\infty)$, for which the family is a $C^1$ function of $c$,
we show in Lemma \ref{propL} that $\mathcal{L}$ has two negative eigenvalues if $c_0 + 2 b'(c_0) < 0$ and
one simple negative eigenvalue if $c_0 + 2 b'(c_0) \geq 0$. In addition, $\mathcal{L}$ has a double zero eigenvalue
if $c_0 + 2 b'(c_0) = 0$ and a simple zero eigenvalue if $c_0 + 2 b'(c_0) \neq 0$. The zero eigenvalue of $\mathcal{L}$
always exists due to the translational symmetry implying $\mathcal{L} \partial_x \psi_0 = 0$.
The second assertion of Theorem \ref{theorem-main} is proven from Lemma \ref{teoexist}, Corollary \ref{corollary-i},
and Lemma \ref{propL}.

The sharp characterization of negative and zero eigenvalues of the linearized operator $\mathcal{L}$
is one of {\em the most interesting applications} of the new variational formulation.
It allows us to discuss the non-degeneracy result on simplicity of the zero
eigenvalue obtained in Proposition 3.1 of \cite{hur} based on an extension of Sturm's oscillation theory.
The non-degeneracy result does not hold for $\alpha < \alpha_0$ because a continuation of the solution $\psi$ to
the stationary equation (\ref{ode-wave}) with respect to parameters $c$ and $b$
passes a fold point in the sense of the following definition.

\begin{definition}
\label{def-fold}
We say that the solution $\psi$ to the stationary equation (\ref{ode-wave}) is at the fold point
if the linearized operator $\mathcal{L}$ at $\psi$ has a double zero eigenvalue.
\end{definition}

If $b = 0$ is fixed and $c$ is labeled as $\omega$ with $c = \omega$,
the fold point located at $\omega_0 \in (0,\infty)$ induces the fold
bifurcation: no branches of single-lobe solutions exist for $\omega < \omega_0$ and two branches of single-lobe solutions
exist for $\omega > \omega_0$. The linearized operator $\mathcal{L}$ has one negative eigenvalue for
one branch of single-lobe solutions and two negative eigenvalues for the other branch.
The fold bifurcation occurs if $\alpha < \alpha_0$, as follows from the Stokes expansions in \cite{lepeli}.
We show that this fold bifurcation is unfolded in the boundary-value problem (\ref{ode-bvp})
so that only one branch of single-lobe solutions exists on the $(c,b)$ parameter plane from both sides of the fold point.
These results are discussed in Remarks \ref{remark12}, \ref{remark21}, and \ref{rem-small}
using the Galilean transformation in Proposition \ref{prop-galileo}
and the Stokes expansion in Proposition \ref{proposition-small}.

In Section \ref{sec-stability}, we present the spectral stability result which yields
the last assertion of Theorem \ref{theorem-main}.
For each value of $c_0 \in (-1,\infty)$, for which the family is a $C^1$ function of $c$, we prove
in Lemma \ref{mainT} that
the periodic wave is spectrally stable in the sense of Definition \ref{defspe}
if $b'(c_0) \geq 0$ and unstable if $b'(c_0) < 0$. Moreover,
in the case of spectral instability, there exists exactly one unstable
(real, positive) eigenvalue of $\partial_x \mathcal{L}$ in $L^2_{\rm per}(\mathbb{T})$.
Thanks to the correspondence $F(\psi) = \pi b(c)$ in (\ref{correspondence-momentum}),
the spectral stability result reproduces the criterion for stability of solitary waves \cite{bona2,kap,lin,Pel}.
Note that this scalar criterion obtained from the new variational characterization
of periodic waves replaces computations of a $2\times 2$ matrix needed
to establish if the periodic wave is a constrained minimizer of energy
subject to fixed momentum and mass as in \cite{hur}.
In particular, the sharp criterion based on the sign of $b'(c_0)$ works equally well
in the cases when the linearized operator $\mathcal{L}$ has one or two
negative eigenvalues, see Remark \ref{remark33}.

We note that if $b'(c_0) > 0$ and the periodic wave with profile $\psi_0$ is spectrally stable, then
it is also orbitally stable in $H^{\frac{\alpha}{2}}_{\rm per}(\mathbb{T})$
according to the standard technique from \cite{ANP},
assuming global well-posedness of the fractional KdV equation (\ref{rDE}) in $H^s_{\rm per}(\mathbb{T})$
for $s > \frac{\alpha}{2}$. For such results on the orbital stability of the periodic wave,
we do not need to use the non-degeneracy assumption
on the $2$-by-$2$ matrix of derivatives of momentum $F(\psi)$ and mass $M(\psi)$ with
respect to parameters $c$ and $b$ stated in Theorem 4.1 in \cite{hur}.

We show the validity of Remark \ref{remark-main} in Lemma \ref{corollary-iii}, Corollary \ref{cor-condition},
Lemma \ref{lem-unstable-point}, and Lemma \ref{corollary-ii}.
Because all constrained minimizers of energy subject to fixed momentum in \cite{stefanov} are characterized by
only one simple negative eigenvalue of the linearized operator $\mathcal{L}$, the assumption
${\rm Ker}(\mathcal{L} |_{X_0}) = {\rm span}(\partial_x \psi_0)$ in Theorem \ref{theorem-main} is satisfied
for all solutions in \cite{stefanov}. Based on the numerical evidence, we formulate the following conjecture.

\begin{conjecture}
Let $\psi_0 \in H^{\alpha}_{\rm per}(\mathbb{T})$ be the solution to
the boundary-value problem (\ref{ode-bvp}) with $c = c_0$ obtained from Theorem \ref{theorem-main}.
For every $c_0 \in (-1,\infty)$ and every $\alpha \in \left(\frac{1}{3},2\right]$,
${\rm Ker}(\mathcal{L} |_{X_0}) = {\rm span}(\partial_x \psi_0)$.
\end{conjecture}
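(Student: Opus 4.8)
The statement is the non-degeneracy of the \emph{constrained} operator $\mathcal{L}|_{X_0}$, which is strictly weaker than the non-degeneracy of $\mathcal{L}$ itself and is therefore expected to survive even across the fold points where $\mathcal{L}$ acquires a double zero eigenvalue. My plan is to exploit the reflection symmetry inherited from the even profile $\psi_0$. Since $\psi_0$ is even, $\mathcal{L}$ commutes with the reflection $x\mapsto -x$ and preserves the splitting $L^2_{\rm per}(\mathbb{T})=L^2_{\rm even}\oplus L^2_{\rm odd}$; the constants lie in $L^2_{\rm even}$ while $\partial_x\psi_0\in L^2_{\rm odd}$. First I would reduce the kernel of $\mathcal{L}|_{X_0}$ to the two parity sectors. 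In the odd sector, a function $v$ with $\Pi_0\mathcal{L}v=0$ satisfies $\mathcal{L}v=\text{const}$, and since $\mathcal{L}v$ is then odd while a nonzero constant is even, we must have $\mathcal{L}v=0$; hence the odd part of ${\rm Ker}(\mathcal{L}|_{X_0})$ coincides with the odd part of ${\rm Ker}(\mathcal{L})$. In the even sector one is left with the solvability of $\mathcal{L}v=\mu$ for a constant $\mu$ with $v$ of zero mean. This reduces the conjecture to two independent assertions.

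The first assertion is that $\partial_x\psi_0$ spans the entire odd part of ${\rm Ker}(\mathcal{L})$. Because $\psi_0$ is single-lobe and even, it is strictly monotone on $(0,\pi)$, so $\partial_x\psi_0$ is odd and vanishes only at $x=0$ and $x=\pi$; on the fundamental domain $(0,\pi)$ it is nodeless. The plan is to show that such a nodeless odd eigenfunction is the lowest eigenfunction of $\mathcal{L}$ in $L^2_{\rm odd}$ and that this lowest eigenvalue is simple, so that no other odd mode can reach the translational zero eigenvalue that is pinned at zero by symmetry. For $\alpha=2$ this is immediate from classical Sturm--Liouville/Floquet oscillation theory together with the second-order ODE structure. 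For general $\alpha\in\left(\frac13,2\right)$ I would replace this by the extension of Sturm's oscillation theory invoked for Proposition 3.1 of \cite{hur}, or alternatively by the Caffarelli--Silvestre harmonic extension, which trades $D^\alpha$ on $\mathbb{T}$ for a local degenerate-elliptic problem on a half-cylinder where a nodal-domain argument in the spirit of Frank--Lenzmann can be run inside the odd sector.

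The second assertion is that the even sector contributes nothing: there is no even, zero-mean $v$ with $\mathcal{L}v$ constant. Differentiating the stationary equation in the decoupled parameters $(c,b)$ gives $\mathcal{L}\,\partial_b\psi=-1$, so this assertion is equivalent to $\langle\mathcal{L}^{-1}1,1\rangle\neq 0$ on the even sector, that is, to $\partial_b M\neq 0$ along the two-parameter family $\psi(c,b)$, that is, to the curve of single-lobe solutions being a graph $b=b(c)$ over $c$ with no vertical tangent. This is precisely the ``unfolding'' of the fold bifurcation already established near $\alpha_0$ from the Stokes expansion of Proposition \ref{proposition-small} and transported by the Galilean transformation of Proposition \ref{prop-galileo} to the positive waves with positive Fourier coefficients. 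The plan is to upgrade this local information to a global statement on $(-1,\infty)\times\left(\frac13,2\right]$, checking at each fold that the even, two-node kernel eigenfunction carries nonzero mean, so that $1\notin{\rm Range}(\mathcal{L})$ there and the even constrained kernel stays trivial.

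The main obstacle is the odd-sector non-degeneracy for general nonlocal $\alpha$. Unlike the case $\alpha=2$, the operator $D^\alpha$ is nonlocal, Courant's nodal-domain theorem fails, and positivity-improving (Perron--Frobenius) arguments control only the global ground state, which is even, and not the odd ground state whose eigenfunction necessarily changes sign. Establishing that $\partial_x\psi_0$ remains the simple odd ground state uniformly in $c\in(-1,\infty)$ and $\alpha\in\left(\frac13,2\right]$ requires global spectral control, ruling out any second odd eigenvalue descending to zero across the entire parameter range, including through the fold transitions where the even part of the spectrum reorganizes. It is exactly this global control that is presently accessible only from the numerics, which is why the statement is recorded as a conjecture rather than a theorem.
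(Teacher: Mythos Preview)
The statement is recorded in the paper as a \emph{conjecture}, supported only by the numerical evidence of Section~\ref{sec-numerics}; the paper gives no proof. Your proposal is likewise not a proof but an outline of a strategy, and you correctly concede in the last paragraph that the required global control is ``presently accessible only from the numerics.'' So there is nothing to compare on the level of proofs.

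Your diagnosis of where the difficulty lies, however, is misplaced. You single out the odd-sector simplicity of ${\rm Ker}(\mathcal{L})$ as the main obstacle, but this part is already settled in the paper. By parity, any odd $f\in{\rm Ker}(\mathcal{L}|_{X_0})$ orthogonal to $\partial_x\psi_0$ has $\langle f,\psi_0\rangle=0$, and the first case in the proof of Lemma~\ref{lem-mean-value} then derives a contradiction via Proposition~\ref{prop-kernel}; that proposition is used only as a conditional statement (if $\{1,\psi,\psi^2\}\subset{\rm Range}(\mathcal{L})$ then the kernel is simple), and its hypothesis is verified directly in that proof, so the criticism in Remark~\ref{rem-false} does not apply. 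Hence the odd sector contributes exactly ${\rm span}(\partial_x\psi_0)$ unconditionally, and the entire open content of the conjecture sits in the even sector: one must exclude an even, zero-mean $f$ with $\mathcal{L}f$ equal to a nonzero constant, equivalently $\langle\mathcal{L}^{-1}1,1\rangle=0$.

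Your treatment of the even sector also conflates two distinct degeneracies. The fold points of Definition~\ref{def-fold}, where $z(\mathcal{L})=2$, are \emph{not} where the conjecture can fail: Lemma~\ref{lem-mean-value} shows that any extra element of ${\rm Ker}(\mathcal{L}|_{X_0})$ forces ${\rm Ker}(\mathcal{L})={\rm span}(\partial_x\psi_0)$, i.e.\ $z(\mathcal{L})=1$. What actually has to be ruled out, by Lemmas~\ref{corollary-iii} and~\ref{corollary-ii}, is a point with $n(\mathcal{L})=2$, $z(\mathcal{L})=1$, at which the map $\omega\mapsto c(\omega)$ from the Galilean parametrization acquires a critical point; this is a fold in the $c$-parametrization of the zero-mean family, not in the spectrum of $\mathcal{L}$. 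Your plan to ``check at each fold that the even, two-node kernel eigenfunction carries nonzero mean'' therefore addresses the wrong object.
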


For further comparison with the outcomes of the variational method in \cite{stefanov},
we mention that our method allows us (i) to construct all single-lobe periodic solutions
of the stationary equation (\ref{ode-wave}) on the $(c,b)$ parameter plane,
(ii) to extend the results for every $\alpha \in \left(\frac{1}{3},2\right]$,
(iii) to filter out the constant solution from the single-lobe periodic solutions,
(iv) to find more spectrally stable branches of local minimizers, and
(v) to unfold the fold point in Definition \ref{def-fold}.

\begin{figure}[h!]
	\centering
		\includegraphics[height=6cm,width=7cm]{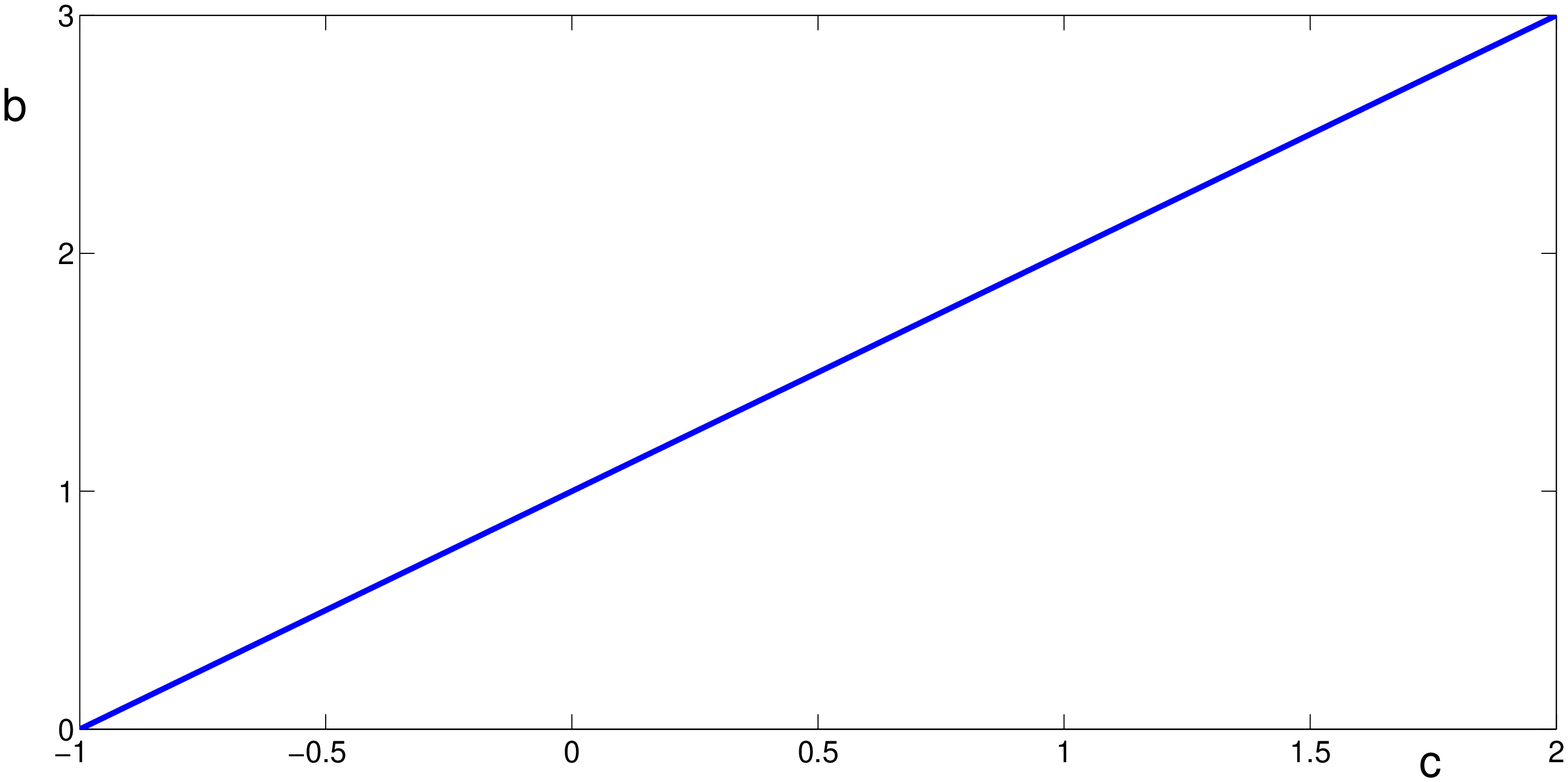}
		\includegraphics[height=6cm,width=7cm]{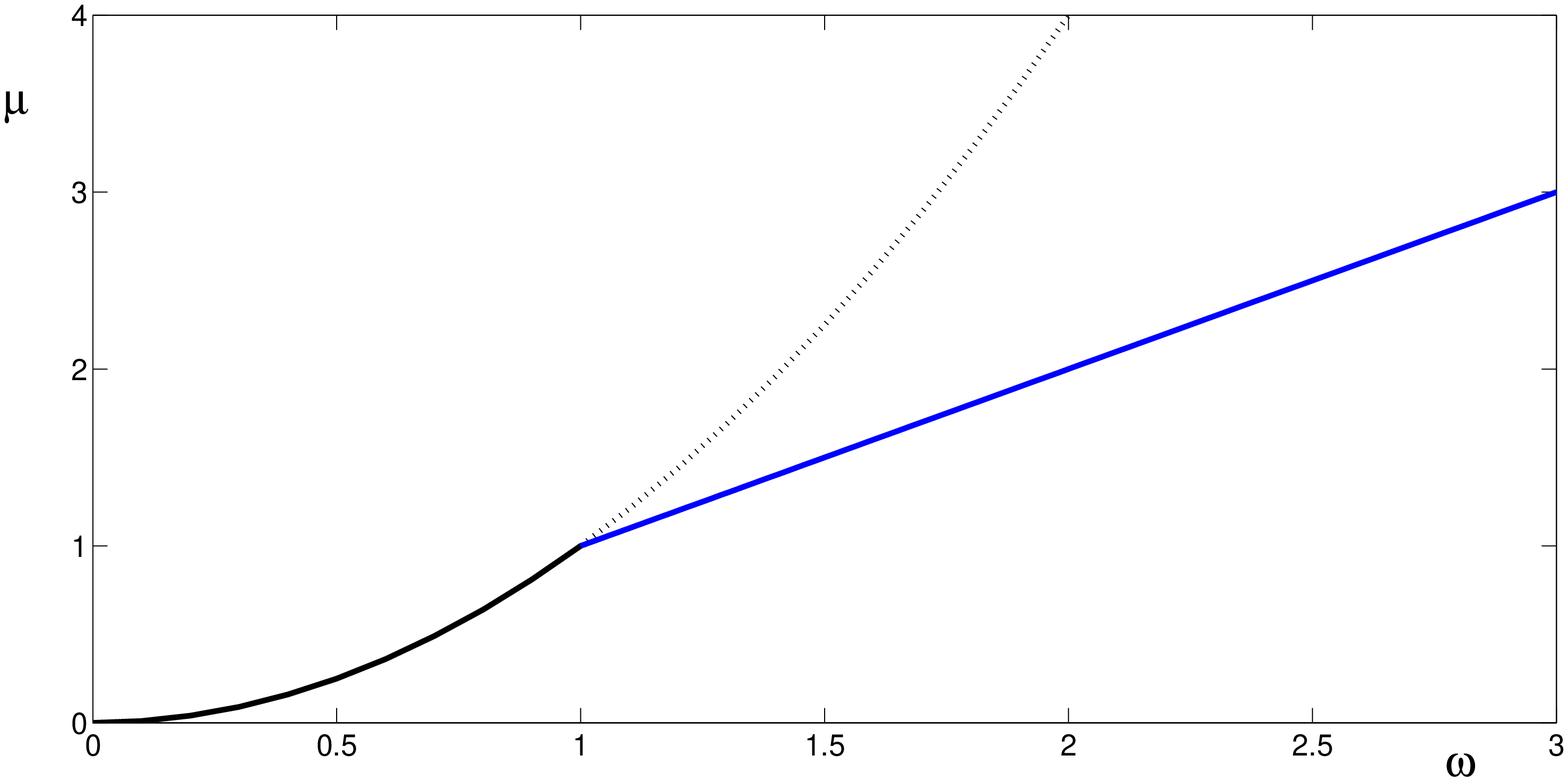}
	\caption{The dependence of $b$ versus $c$ (left) and $\mu$ versus $\omega$ (right) for $\alpha = 1$.}
	\label{fig:alpha1}
\end{figure}

As an illustrative example, we consider the simplest case $\alpha = 1$ (the BO equation).
Fig.\ref{fig:alpha1} (left) shows
the exact dependence $b(c) = c+1$ computed for the mean-zero single-lobe periodic waves
with the profile $\psi$ satisfying the boundary-value problem (\ref{ode-bvp}).

In comparison, Fig.\ref{fig:alpha1} (right)
shows the outcome of the variational method in \cite{stefanov}
on the parameter plane $(\omega,\mu)$, where $b = 0$ and
$c = \omega \in (0,\infty)$ is chosen in the stationary equation (\ref{ode-wave})
and $\mu$ is the period-normalized momentum $F(\psi)$.
Note that the periodic wave with the single-lobe profile $\psi$
is positive and has nonzero mean if $b = 0$ and $\omega \in (1,\infty)$,
see the exact solutions (\ref{BO-wave}).

There exists a constrained minimizer of
energy for every $\mu > 0$ as in Theorem 1 in \cite{stefanov}, however, it is given by the constant solution for $\mu \in (0,1)$
and $\omega \in (0,1)$ with the exact relation $\mu = \omega^2$ (solid black curve)
and by the single-lobe periodic solution for $\mu \in (1,\infty)$
and $\omega \in (1,\infty)$ with the exact relation $\mu = \omega$ (solid blue curve).
The constant solution is a saddle point of energy for $\mu \in (1,\infty)$ (dotted black curve).
As a result, the family of constrained minimizers of energy is piecewise smooth and
a transition between the two minimizers occur at $\mu = 1$.
Only the single-lobe solutions are recovered on the parameter plane $(c,b)$
shown on Fig.\ref{fig:alpha1} (left). In the end of Section 5, we show that the bifurcations of minimizers of energy become more complicated
for $\alpha < 1$ with more branches of local minimizers and saddle points of energy,
all are unfolded on the $(c,b)$ parameter plane.

Spectral stability of solitary waves for the fractional KdV equation
(\ref{rDE}) was recently considered in \cite{A} for $\alpha\in\left(\frac{1}{3},2\right]$.
Solitary waves were found  to be spectrally and orbitally stable if $\alpha > \frac{1}{2} $
and unstable if $\alpha < \frac{1}{2}$ with an open question on the borderline case $\alpha=\frac{1}{2}$.
The result of \cite{A} relies on the scaling invariance of the fractional KdV equation
on infinite line $\mathbb{R}$. Since this scaling invariance is lost in the periodic
domain, we have to rely on the numerical computations of the existence curve on the $(c,b)$ plane in order to
find the parameter regions where the periodic waves are spectrally stable or unstable.

Numerical computations of the existence curve on the parameter plane $(c,b)$ for different values of $\alpha$
are reported in Section \ref{sec-numerics}. For the integrable cases $\alpha = 1$ and $\alpha = 2$,
the existence curve can be computed exactly. For $\alpha \in \left[\frac{1}{2},2 \right]$, we show numerically that
$b'(c) > 0$ for every $c \in (-1,\infty)$, hence the corresponding periodic waves are spectrally stable. For
$\alpha \in \left( \frac{1}{3},\frac{1}{2} \right)$, we show numerically that there exists $c_* \in (-1,\infty)$ such that
$b'(c) > 0$ for $c \in (-1,c_*)$ and $b'(c) < 0$ for $c \in (c_*,\infty)$, hence the periodic waves
are spectrally stable for $c \in (-1,c_*)$ and spectrally unstable for $c \in (c_*,\infty)$.
These numerical results in the limit $c \to \infty$ agree with the analytical results of \cite{A}
for the solitary waves.

\section{Existence via a new variational problem}
\label{sec-existence}

Here we obtain solutions to the boundary-value problem (\ref{ode-bvp}) for $\alpha > \frac{1}{3}$.
These solutions have an even, single-lobe profile $\psi$ in the sense of Definition \ref{defilobe} for $\alpha \leq 2$.
Compared to the first assertion of Theorem \ref{theorem-main}, we use the general notation $\psi$ for the profile of
the periodic wave satisfying the boundary-value problem (\ref{ode-bvp}) and $c$ for the (fixed) wave speed.

For every fixed $c \in (-1,\infty)$, the existence of the periodic wave with profile $\psi$
is established in three steps. First, we prove the existence of a minimizer of
the following minimization problem
\begin{equation}
\label{infB}
q_c = \inf_{u\in Y_0} \mathcal{B}_c(u), \quad
\mathcal{B}_c(u) := \frac{1}{2}\int_{-\pi}^{\pi} \left[ (D^{\frac{\alpha}{2}}u)^2 + c u^2 \right] dx
\end{equation}
in the constrained set
\begin{equation}
\label{Y-constraint}
Y_0 := \left\{u\in H_{\rm per}^\frac{\alpha}{2}(\mathbb{T}) : \quad  \int_{-\pi}^{\pi} u^3 dx = 1, \quad \int_{-\pi}^{\pi} u dx = 0 \right\}.
\end{equation}
Second, we use Lagrange multipliers to show that the Euler--Lagrange equation for
(\ref{infB}) and (\ref{Y-constraint}) is equivalent to the stationary equation (\ref{ode-wave}).
Third, we use bootstrapping arguments to show that the solution $\psi$
of the minimization problem (\ref{infB}) is actually smooth in $H^{\infty}_{\rm per}(\mathbb{R})$ so that it
satisfies the boundary-value problem (\ref{ode-bvp}).

\begin{theorem}
\label{minlem}
Fix $\alpha > \frac{1}{3}$. For every $c > -1$, there exists a ground state
of the constrained minimization problem \eqref{infB},
that is, there exists $\phi\in Y_0$ satisfying
\begin{equation}\label{minBfunc}
\mathcal{B}_c(\phi)=\inf_{u\in Y_0}\mathcal{B}_c(u).
\end{equation}
If $\alpha \leq 2$, the ground state has an even, single-lobe profile $\phi$
in the sense of Definition \ref{defilobe}.
\end{theorem}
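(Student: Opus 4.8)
The plan is to establish existence of a minimizer by the direct method of the calculus of variations, then to prove the single-lobe property for $\alpha \le 2$ by a rearrangement argument. First I would verify that the functional $\mathcal{B}_c$ is bounded below on $Y_0$ and that minimizing sequences are bounded in $H^{\alpha/2}_{\rm per}(\mathbb{T})$. For $c \ge 0$ this is immediate since $\mathcal{B}_c(u) \ge \frac{1}{2}\|D^{\alpha/2}u\|_{L^2}^2$; for $c \in (-1,0)$ one needs to control $-|c|\|u\|_{L^2}^2$ using the zero-mean constraint, which gives the Poincaré-type inequality $\|u\|_{L^2}^2 \le \|D^{\alpha/2}u\|_{L^2}^2$ on $X_0$ (the lowest nonzero frequency on $\mathbb{T}$ is $|\xi|=1$), so that $\mathcal{B}_c(u) \ge \frac{1}{2}(1-|c|)\|D^{\alpha/2}u\|_{L^2}^2 \ge 0$. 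Hence $q_c \ge 0$. One must also check $q_c$ is finite, i.e. $Y_0 \ne \varnothing$ and $q_c < \infty$: a suitable smooth trial function with zero mean and positive cubic integral, rescaled, lies in $Y_0$, and since $\alpha > \frac{1}{3}$ the Sobolev embedding $H^{\alpha/2}_{\rm per}(\mathbb{T}) \hookrightarrow L^3_{\rm per}(\mathbb{T})$ holds (because $\alpha/2 > \frac{1}{6} = \frac{1}{2} - \frac{1}{3}$), so the cubic constraint is well-defined and continuous.

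Next I would take a minimizing sequence $\{u_n\} \subset Y_0$, which by the above is bounded in $H^{\alpha/2}_{\rm per}(\mathbb{T})$; extract a weakly convergent subsequence $u_n \rightharpoonup \phi$ in $H^{\alpha/2}_{\rm per}(\mathbb{T})$. Since $\mathbb{T}$ is compact and $\alpha > \frac{1}{3}$, the embedding $H^{\alpha/2}_{\rm per}(\mathbb{T}) \hookrightarrow L^3_{\rm per}(\mathbb{T})$ is \emph{compact} (Rellich--Kondrachov on the torus: $H^{\alpha/2} \hookrightarrow\hookrightarrow L^p$ for $p < \frac{2}{1-\alpha}$ when $\alpha < 1$, all $p$ when $\alpha \ge 1$, and $3 < \frac{2}{1-\alpha}$ is equivalent to $\alpha > \frac{1}{3}$), so $u_n \to \phi$ strongly in $L^3$ and in $L^2$. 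This passes both constraints to the limit: $\int \phi^3\,dx = 1$ and $\int \phi\,dx = 0$, so $\phi \in Y_0$; in particular $\phi \not\equiv 0$. Weak lower semicontinuity of the convex functional $u \mapsto \|D^{\alpha/2}u\|_{L^2}^2$ together with strong $L^2$ convergence of the $cu^2$ term gives $\mathcal{B}_c(\phi) \le \liminf \mathcal{B}_c(u_n) = q_c$, and since $\phi \in Y_0$ we conclude $\mathcal{B}_c(\phi) = q_c$. This proves the first assertion.

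For the single-lobe claim when $\alpha \le 2$, the strategy is symmetric decreasing rearrangement. The key analytic input is the Riesz-type rearrangement inequality on the circle for the operator $D^\alpha$ with $\alpha \in (0,2]$: if $u^*$ denotes the symmetric decreasing rearrangement of $u$ on $\mathbb{T}$, then $\|D^{\alpha/2}u^*\|_{L^2} \le \|D^{\alpha/2}u\|_{L^2}$ (this is the periodic P\'olya--Szeg\H{o} inequality, valid because the Fourier multiplier $|\xi|^\alpha$ corresponds to a symmetric, nonnegative-definite, convolution kernel that is decreasing in the appropriate sense for $\alpha \le 2$; equivalently the heat-type semigroup $e^{-tD^\alpha}$ has a rearrangement-friendly kernel). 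Since $\|u^*\|_{L^2} = \|u\|_{L^2}$, rearrangement does not increase $\mathcal{B}_c$. The subtlety is that rearrangement must preserve membership in $Y_0$; the zero-mean constraint is not preserved by $u \mapsto u^*$. To handle this I would follow the now-standard device (used e.g. in \cite{hur,stefanov}): replace $u$ by $u - \min u$ plus a translate, or rather work with the rearrangement of the function whose maximum is normalized, and then subtract the mean; more precisely, one shows that for a minimizer $\phi$ one can assume $\phi - \min\phi \ge 0$, rearrange this nonnegative function to get an even, nonincreasing-on-$[0,\pi]$ profile, and then argue that the extremal equation forces the profile itself to have a single maximum and single minimum. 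Concretely, once existence is known, $\phi$ solves the Euler--Lagrange equation $D^\alpha \phi + c\phi = a\phi^2 + \beta$ for Lagrange multipliers $a,\beta$ (this is the content of the second step of Section \ref{sec-existence}, which I may assume), and then a comparison/rearrangement argument at the level of this equation — using that $D^\alpha$ with $\alpha \le 2$ satisfies a maximum-principle-type estimate and that equality in P\'olya--Szeg\H{o} forces the function to already be a translate of its rearrangement — yields that $\phi$ is, after translation, even and monotone on each half-period, i.e. single-lobe in the sense of Definition \ref{defilobe}.

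The main obstacle I anticipate is precisely this last point: reconciling the rearrangement, which wants a \emph{nonnegative} function, with the \emph{zero-mean} constraint, and then upgrading "equal to its own rearrangement" to the strict single-lobe statement (one maximum, one minimum, strict monotonicity in between). The cleanest route is likely to prove the single-lobe property not directly for the constrained minimizer but to invoke the equivalence — established by the Galilean/Lagrange-multiplier correspondence between (\ref{ode-bvp}) and (\ref{ode-wave}) — with the positive single-lobe waves of \cite{stefanov} (valid for $\alpha \in (\frac{1}{2},2]$) and with the rearrangement arguments of \cite{hur} (valid for $\alpha \in (\frac{1}{3},2]$), and then show strict monotonicity by a Sturm-type nodal argument on the first-order (in a suitable sense) equation satisfied by $\phi'$, using $D^\alpha \phi' + c\phi' = 2a\phi\phi'$ and the fact that $\phi'$ changes sign exactly at the extrema.
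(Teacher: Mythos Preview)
Your existence argument via the direct method is correct and essentially coincides with the paper's proof: coercivity from the zero-mean Poincar\'e inequality (valid for $c>-1$), compactness of $H^{\alpha/2}_{\rm per}(\mathbb{T})\hookrightarrow L^3_{\rm per}(\mathbb{T})$ for $\alpha>\frac13$, and weak lower semicontinuity.

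The gap is in the single-lobe step. You write that ``the zero-mean constraint is not preserved by $u\mapsto u^*$'' and then embark on a workaround via $u-\min u$, the Euler--Lagrange equation, and a Sturm-type argument. This concern is misplaced, and the detour is unnecessary. On the finite-measure space $\mathbb{T}$ the symmetric decreasing rearrangement $u^*$ is \emph{equimeasurable} with $u$ (this is how it is defined, also for sign-changing functions: $|\{u^*>t\}|=|\{u>t\}|$ for every $t\in\mathbb{R}$). Consequently $\int_{\mathbb{T}} g(u^*)\,dx=\int_{\mathbb{T}} g(u)\,dx$ for every Borel $g$, so in particular
\[
\int_{\mathbb{T}} u^*\,dx=\int_{\mathbb{T}} u\,dx=0,\qquad \int_{\mathbb{T}} (u^*)^3\,dx=\int_{\mathbb{T}} u^3\,dx=1.
\]
Thus $u\in Y_0$ implies $u^*\in Y_0$, and both constraints survive rearrangement automatically. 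The paper's proof is then one line: by the fractional Polya--Szeg\H{o} inequality on $\mathbb{T}$ for $\alpha\in(0,2]$ (cited from \cite{CJ2019}) one has $\mathcal{B}_c(u^*)\le \mathcal{B}_c(u)$, the constraints are invariant, hence any minimizer may be replaced by its rearrangement, which is even and single-lobe by construction. No appeal to the Euler--Lagrange equation, to \cite{stefanov}, or to nodal arguments for $\phi'$ is needed. Your confusion may stem from the common convention of defining $u^*$ only for nonnegative $u$; on a bounded domain the definition extends to real-valued $u$ without difficulty and equimeasurability does the work.
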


\begin{proof}
It follows that $\mathcal{B}_c$ is a smooth functional bounded on $H_{\rm per}^\frac{\alpha}{2}(\mathbb{T})$.
Moreover, $\mathcal{B}_c$ is proportional to the quadratic form of the operator $c + D^{\alpha}$
with the spectrum in $L^2_{\rm per}(\mathbb{T})$ given by $\{ c + |m|^{\alpha}, \;\; m \in \mathbb{Z} \}$.
Thanks to the zero-mass constraint in (\ref{Y-constraint}), for every $c > -1$,
we have
\begin{equation}
\label{positivity}
\mathcal{B}_c(u) \geq \frac{1}{2} (c + 1) \| u \|_{L^2_{\rm per}(\mathbb{T})}^2, \quad u \in Y_0,
\end{equation}
and by the standard G{\aa}rding's inequality, for every $c > -1$ there exists $C > 0$ such that
$$
\mathcal{B}_c(u) \geq C \| u \|_{H^{\frac{\alpha}{2}}_{\rm per}(\mathbb{T})}^2, \quad u \in Y_0.
$$
Hence $\mathcal{B}_c$ is equivalent to the squared norm in $H_{\rm per}^\frac{\alpha}{2}(\mathbb{T})$
for functions in $Y_0$, yielding $q_c \geq 0$ in (\ref{infB}).
Let $\{u_n\}_{n \in \mathbb{N}}$ be a minimizing sequence for the constrained minimization problem
\eqref{infB}, that is, a sequence in $Y_0$ satisfying
$$
\mathcal{B}_c(u_n)\rightarrow q_c \quad  \mbox{as} \quad  n\rightarrow \infty.
$$
Since $\{u_n\}_{n \in \mathbb{N}}$ is bounded in $H_{\rm per}^\frac{\alpha}{2}(\mathbb{T})$, there exists
$\phi\in H_{\rm per}^\frac{\alpha}{2}(\mathbb{T})$  such that, up to a subsequence,
$$
u_n\rightharpoonup \phi \quad \mbox{in} \ H_{\rm per}^\frac{\alpha}{2}(\mathbb{T}),  \quad  \mbox{as} \quad n\rightarrow \infty.
$$
For every $\alpha>\frac{1}{3}$, the energy space $H_{\rm per}^\frac{\alpha}{2}(\mathbb{T})$ is compactly embedded in
$L_{\rm per}^{3}(\mathbb{T})$. Thus,
$$
u_n\rightarrow \phi \quad \mbox{in} \ L^3_{\rm per}(\mathbb{T}),  \quad  \mbox{as} \quad n\rightarrow \infty.
$$
Using the estimate
\begin{eqnarray*}
\left|\int_{-\pi}^{\pi}(u_n^3-\phi^3)dx\right|
&\leq&\int_{-\pi}^{\pi}|u_n^3-\phi^3|dx \\
&\leq& \left( \|\phi\|^2_{L^3_{\rm per}} + \|\phi\|_{L^3_{\rm per}} \|u_n\|_{L^3_{\rm per}} + \|u_n\|_{L^3_{\rm per}}^2 \right) \|u_n-\phi\|_{L^3_{\rm per}},
\end{eqnarray*}
it follows that $\int_{-\pi}^{\pi} \phi^3 dx = 1$. By a similar argument, since
$H_{\rm per}^{\frac{\alpha}{2}}(\mathbb{T})$ is also compactly embedded in $L_{\rm per}^1(\mathbb{T})$,
it follows that $\int_{-\pi}^{\pi}\phi dx =0$. Hence, $\phi \in Y_0$.
Thanks to the weak lower semi-continuity of $\mathcal{B}_c$, we have
\begin{equation*}
\mathcal{B}_c(\phi)\leq\liminf_{n\rightarrow \infty} \mathcal{B}(u_n)=q_c.
\end{equation*}
Therefore, $\mathcal{B}_c(\phi) = q_c$.

If $\alpha \in (0,2]$, the symmetric decreasing rearrangements of $u$ do not increase
$\mathcal{B}_c(u)$ while leaving the constraints in $Y_0$ invariant thanks to the
fractional Polya--Szeg\"{o} inequality, see Lemma A.1 in \cite{CJ2019}.
As a result, the minimizer $\phi \in Y_0$ of $\mathcal{B}_c(u)$ must decrease away symmetrically from
the maximum point. By the translational invariance, the maximum point can be placed at $x = 0$,
which yields an even, single-lobe profile for $\phi$.
\end{proof}

\begin{corollary}
\label{cor-existence}
For every $\alpha \in \left(\frac{1}{3},2\right]$, there exists a solution to the boundary-value problem
(\ref{ode-bvp}) with an even, single-lobe profile $\psi$.
\end{corollary}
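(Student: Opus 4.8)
The plan is to turn the constrained minimizer $\phi \in Y_0$ produced by Theorem \ref{minlem} into a solution of the boundary-value problem (\ref{ode-bvp}) by means of the Lagrange multiplier rule followed by a positive rescaling. The first step is to check that $\phi$ is a regular point of the constraint set (\ref{Y-constraint}), that is, that the two differentials $v \mapsto 3\int_{-\pi}^{\pi}\phi^2 v\,dx$ and $v \mapsto \int_{-\pi}^{\pi} v\,dx$ are linearly independent as functionals on $H^{\alpha/2}_{\rm per}(\mathbb{T})$. This reduces to the claim that $\phi^2$ is not a constant function, which holds because $\phi$ itself cannot be constant: a constant $\phi$ with $\int_{-\pi}^{\pi}\phi\,dx = 0$ would contradict $\int_{-\pi}^{\pi}\phi^3\,dx = 1$. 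Hence the Lagrange multiplier theorem applies and produces constants $\nu,\mu \in \R$ such that
\[
(D^{\alpha}+c)\phi = 3\nu\phi^2 + \mu.
\]

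The second step is to determine $\mu$ and the sign of $\nu$. Integrating the Euler--Lagrange equation over $\mathbb{T}$ annihilates both $D^{\alpha}\phi$ and $c\phi$ because $\phi$ has zero mean, which gives $\mu = -\frac{3\nu}{2\pi}\|\phi\|_{L^2_{\rm per}(\mathbb{T})}^2$. Pairing the same equation with $\phi$ and using the constraints $\int_{-\pi}^{\pi}\phi^3\,dx = 1$ and $\int_{-\pi}^{\pi}\phi\,dx = 0$ yields $3\nu = 2\mathcal{B}_c(\phi) = 2 q_c$. Since the coercivity estimate in the proof of Theorem \ref{minlem} shows that $q_c = 0$ would force $\phi = 0$, which is impossible in $Y_0$, we conclude $q_c > 0$ and therefore $\nu > 0$.

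The third step is the rescaling. Setting $\psi := 3\nu\,\phi$ and substituting into the Euler--Lagrange equation, one checks directly that
\[
D^{\alpha}\psi + c\psi - \psi^2 + b = 0, \qquad b := -3\nu\mu = \frac{1}{2\pi}\int_{-\pi}^{\pi}\psi^2\,dx > 0,
\]
which is exactly the closure relation (\ref{b-c}); moreover $\int_{-\pi}^{\pi}\psi\,dx = 0$. Because $\nu > 0$, the positive rescaling preserves the even, single-lobe structure of $\phi$ established in Theorem \ref{minlem}, so $\psi$ is an even single-lobe profile in the sense of Definition \ref{defilobe}. Finally, since $\psi^2 - b$ has zero mean and $(D^{\alpha}+c)^{-1}$ is bounded and gains $\alpha$ derivatives on the zero-mean subspace for $c > -1$, the identity $\psi = (D^{\alpha}+c)^{-1}(\psi^2 - b)$ together with the Sobolev embeddings of $H^{\alpha/2}_{\rm per}(\mathbb{T})$ (iterated finitely many times when $\alpha$ is small) bootstraps $\psi$ into $H^{\alpha}_{\rm per}(\mathbb{T})$, so that $\psi$ solves (\ref{ode-bvp}); the full $H^{\infty}_{\rm per}(\mathbb{T})$ regularity is the content of Proposition \ref{regularity}.

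The main obstacle is the nonvanishing of the cubic Lagrange multiplier $\nu$ together with its definite sign: this is precisely what legitimizes the rescaling $\psi = 3\nu\phi$ and what forces the constant $b$ to coincide with the admissible value (\ref{b-c}). The argument extracts this from the strict positivity $q_c > 0$, which in turn rests on the G{\aa}rding-type coercivity $\mathcal{B}_c(u) \geq C\|u\|_{H^{\alpha/2}_{\rm per}(\mathbb{T})}^2$ valid on $Y_0$ only for $c > -1$; at $c = -1$ this coercivity, and with it the whole construction, degenerates, which is consistent with the restriction $c \in (-1,\infty)$.
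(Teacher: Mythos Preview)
Your argument is essentially the paper's own: apply the Lagrange multiplier rule to the minimizer $\phi$ from Theorem~\ref{minlem}, identify the multipliers via the two constraints, and rescale by the positive multiplier $C_1 = 3\nu = 2\mathcal{B}_c(\phi)$ to land on (\ref{ode-wave}) with $b = b(c)$. You are in fact more careful than the paper in two places: you verify the constraint qualification before invoking Lagrange, and you explicitly argue $q_c > 0$ so that the rescaling is legitimate and preserves the single-lobe shape (the paper simply writes $\psi = C_1\phi$ without comment).

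One small slip: the step ``$\phi^2$ is not constant because $\phi$ is not constant'' is not a valid implication in general (think of a function taking only the values $\pm a$). What actually rules out $\phi^2 \equiv a^2$ is the pair of constraints themselves: if $\phi = \pm a$ a.e.\ then zero mean forces $|\{\phi = a\}| = |\{\phi = -a\}|$, whence $\int_{-\pi}^{\pi}\phi^3\,dx = 0 \neq 1$. Alternatively, for $\alpha \in (\tfrac{1}{3},2]$ you may invoke the single-lobe structure from Theorem~\ref{minlem}, which already excludes two-valued profiles. Either fix closes the gap and the rest of your proof goes through unchanged.
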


\begin{proof}
By Lagrange's Multiplier Theorem, the constrained minimizer $\phi \in Y_0$ in Theorem \ref{minlem} satisfies
the stationary equation
\begin{equation}
\label{lagrange}
D^{\alpha}\phi +c \phi = C_1 \phi^2 + C_2,
\end{equation}
for some constants $C_1$ and $C_2$. From the two constraints in $Y_0$, we have
\begin{equation}
\label{lagrange-constraints}
C_1 = 2 \mathcal{B}_c(\phi), \quad C_2 = - \frac{1}{2\pi} \left( \int_{-\pi}^{\pi} \phi^2 dx \right) C_1,
\end{equation}
The scaling transformation $\psi = C_1 \phi$ maps the stationary equation (\ref{lagrange})
to the form (\ref{ode-wave}) with $b = b(c)$ computed from $\psi$ by (\ref{b-c}).
\end{proof}

The following lemma states that the infimum $q_c$ in (\ref{infB}) is continuous in $c$ for $c > -1$
and that $q_c \to 0$ as $c \to -1$.

\begin{lemma}
\label{lemma-continuity}
Let $\phi \in Y_0$ be the ground state of the constrained minimization problem (\ref{infB})
in Theorem \ref{minlem} and $q_c = \mathcal{B}_c(\phi)$. Then $q_c$ is continuous in
$c$ for $c > -1$ and $q_c \to 0$ as $c \to -1$.
\end{lemma}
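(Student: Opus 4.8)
The plan is to establish continuity of $q_c$ in two stages: first an upper bound via the variational characterization, then a matching lower bound via compactness, and finally a separate argument for the limit as $c \to -1$. To prove continuity at a fixed point $c_* > -1$, I would fix a sequence $c_n \to c_*$ and show $q_{c_n} \to q_{c_*}$. For the \emph{upper} semicontinuity, let $\phi_* \in Y_0$ be the ground state at $c_*$ from Theorem \ref{minlem}. Since $\phi_*$ is a fixed admissible function belonging to $Y_0$ (the constraint set does not depend on $c$), we have
$$
q_{c_n} \leq \mathcal{B}_{c_n}(\phi_*) = \mathcal{B}_{c_*}(\phi_*) + \frac{1}{2}(c_n - c_*) \int_{-\pi}^{\pi} \phi_*^2\, dx = q_{c_*} + \frac{1}{2}(c_n - c_*) \|\phi_*\|_{L^2_{\rm per}}^2,
$$
so $\limsup_{n\to\infty} q_{c_n} \leq q_{c_*}$. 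For the \emph{lower} semicontinuity, let $\phi_n \in Y_0$ be a ground state at $c_n$; by the same identity $\mathcal{B}_{c_*}(\phi_n) = \mathcal{B}_{c_n}(\phi_n) - \frac{1}{2}(c_n - c_*)\|\phi_n\|_{L^2_{\rm per}}^2$. The point is that $\|\phi_n\|_{L^2_{\rm per}}^2$ is bounded uniformly in $n$: from the coercivity estimate in the proof of Theorem \ref{minlem}, for $c_n$ bounded away from $-1$ we get $\|\phi_n\|^2_{H^{\alpha/2}_{\rm per}} \leq C^{-1} q_{c_n}$, and $q_{c_n}$ is bounded above by the already-established upper bound. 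Hence $\mathcal{B}_{c_*}(\phi_n) \geq q_{c_n} - \tfrac12 |c_n - c_*|\, \mathrm{const}$, and since $\phi_n \in Y_0$ we also have $\mathcal{B}_{c_*}(\phi_n) \geq q_{c_*}$; combining, $q_{c_*} \leq q_{c_n} + \tfrac12|c_n - c_*|\,\mathrm{const}$, so $\liminf_{n\to\infty} q_{c_n} \geq q_{c_*}$. Together with the upper bound this gives $q_{c_n} \to q_{c_*}$.

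For the limiting behavior $q_c \to 0$ as $c \to -1^+$, I would construct an explicit near-optimal test function. The natural candidate is a small perturbation built from the first nonconstant Fourier mode, which realizes the bottom of the spectrum of $c + D^\alpha$ on the zero-mean subspace: the operator value on $\cos x$ is $c + 1$, which tends to $0$. Concretely, take $u_\varepsilon = \varepsilon \cos x + a_\varepsilon (\text{correction})$ with the amplitude $\varepsilon$ and a second-mode correction chosen so that $\int u_\varepsilon^3\,dx = 1$ and $\int u_\varepsilon\, dx = 0$; since $\int (\cos x)^3 dx = 0$, one needs to include a $\cos 2x$ (or higher-harmonic) term so that a nonzero cubic integral is produced at leading order, and then rescale. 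A cleaner route: start from \emph{any} fixed $v \in Y_0$, and for $t > 0$ set $u_t = t^{-1} v_t$ where $v_t(x)$ is a rescaling/modulation concentrating the mass into the lowest mode — but in the periodic setting scaling is unavailable, so instead I would use $u_s = s\,w + (\text{correction to stay in }Y_0)$ and let the quadratic form be dominated by $\frac12(c+1)\|u_s\|_{L^2}^2$ plus controlled higher-mode contributions, optimizing so that the whole expression is $O(c+1) \to 0$. In fact, since $q_c \geq 0$ always, it suffices to exhibit for each $c$ close to $-1$ a single element $u_c \in Y_0$ with $\mathcal{B}_c(u_c) \leq K(c+1)$ for a constant $K$ independent of $c$; then $0 \le q_c \le \mathcal{B}_c(u_c) \to 0$.

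The main obstacle is the construction of the test function for the $c \to -1$ limit: because the cubic constraint $\int u^3\,dx = 1$ forces the test function to have amplitude bounded below (one cannot simply send amplitude to zero), the quadratic form $\mathcal{B}_c(u_c)$ does not automatically vanish — it vanishes only because the quadratic form on the lowest zero-mean Fourier mode degenerates as $c \to -1$. So the test function must be chosen to put asymptotically all of its $H^{\alpha/2}$-energy into the $m = \pm 1$ modes while still satisfying the cubic normalization, which requires a delicate balance: the cubic integral of a pure $\cos x$ profile is zero, so one must add a small amount of a higher harmonic, contributing a fixed positive amount to the quadratic form of order $|2|^\alpha - 1 > 0$ times the square of its (small) amplitude. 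One then checks that this amplitude can be taken to vanish as $c \to -1$ while the $\cos x$ amplitude stays bounded — but the cubic term $3 \cdot (\text{amp}_1)^2 (\text{amp}_2)$ is the dominant contribution to $\int u^3$, so $\text{amp}_2$ need only be bounded away from $0$ by a constant, not small. This means a more careful accounting is needed: one should let $\text{amp}_1 \to \infty$ like $(c+1)^{-1/2}$ so that $(c+1)\,\text{amp}_1^2$ stays bounded while $\text{amp}_2 \sim \text{amp}_1^{-2} \to 0$ keeps the cubic constraint satisfied and the second-mode energy $\sim \text{amp}_2^2 \to 0$. Verifying that all cross terms and the cubic integral behave as claimed under this scaling is the technical heart of the argument; everything else is the routine semicontinuity bookkeeping above.
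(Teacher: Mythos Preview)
Your continuity argument is fine and is essentially the paper's argument rephrased through sequences; the paper streamlines it by observing directly that $\|u\|_{L^2}^2 \le \frac{2}{c+1}\mathcal{B}_c(u)$ for $u\in Y_0$, which gives the two-sided estimate $0 \le q_{c'} - q_c \le \frac{c'-c}{c+1}\,q_c$ for $c' > c$ without any separate uniform bound on the minimizers.

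The real gap is in your treatment of $q_c \to 0$ as $c\to -1$. Your proposed scaling $\text{amp}_1 \sim (c+1)^{-1/2}$ does \emph{not} work: with that choice $(c+1)\,\text{amp}_1^2$ is merely bounded (in fact equals a positive constant), so the first-mode contribution to $\mathcal{B}_c$ does not vanish and you only get $q_c \lesssim 1$, not $q_c \to 0$. The cubic constraint forces $\text{amp}_2 \sim \text{amp}_1^{-2}$, so the second-mode energy scales like $\text{amp}_1^{-4}$; to make both contributions vanish you need $\text{amp}_1 \sim (c+1)^{-\beta}$ with $0 < \beta < \tfrac12$. The paper makes this completely explicit: it sets
\[
u_\mu(x) = \mu\cos x + \frac{2}{3\pi\mu^2}\cos 2x \in Y_0,
\]
computes $\mathcal{B}_c(u_\mu) = \tfrac{\pi}{2}\big[\mu^2(1+c) + \tfrac{4}{9\pi^2\mu^4}(2^\alpha + c)\big]$, and minimizes over $\mu$ (the optimum is at $\mu \sim (c+1)^{-1/6}$) to obtain $q_c \le C\,(c+1)^{2/3} \to 0$. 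Your target bound $\mathcal{B}_c(u_c) \le K(c+1)$ is in fact unattainable with a two-mode ansatz; the best rate is $(c+1)^{2/3}$, which of course suffices. Once you correct the exponent, your outline matches the paper's proof.
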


\begin{proof}
For a fixed $u \in Y_0$ and for every $c' > c > -1$, we have
$$
0 \leq \mathcal{B}_{c'}(u) - \mathcal{B}_c(u) = \frac{1}{2} (c'-c) \| u \|^2_{L^2_{\rm per}} \leq \frac{c'-c}{c+1} \mathcal{B}_c(u),
$$
thanks to the bound (\ref{positivity}). Let $\mathcal{B}_c(\phi) = q_c$ and $\mathcal{B}_{c'}(\phi') = q_{c'}$.
Then, we have
$$
q_{c'} - q_{c} = \mathcal{B}_{c'}(\phi') - \mathcal{B}_{c}(\phi') + \mathcal{B}_{c}(\phi') - \mathcal{B}_{c}(\phi)
\geq \mathcal{B}_{c'}(\phi') - \mathcal{B}_{c}(\phi') \geq 0
$$
and
$$
q_{c'} - q_{c} = \mathcal{B}_{c'}(\phi') - \mathcal{B}_{c'}(\phi) + \mathcal{B}_{c'}(\phi) - \mathcal{B}_{c}(\phi)
\leq \mathcal{B}_{c'}(\phi) - \mathcal{B}_{c}(\phi) \leq \frac{c'-c}{c+1} \mathcal{B}_c(\phi).
$$
From here, it is clear that $q_{c'} \to q_c$ as $c' \to c$, so that $q_c$ is continuous in $c$ for $c > -1$.
It remains to show that $q_c \to 0$ as $c \to -1$. Consider the following family of two-mode functions in $Y_0$:
$$
u_{\mu}(x) = \mu \cos(x) + \frac{2}{3 \pi \mu^2} \cos(2x), \quad \mu > 0,
$$
which satisfy the constraints in (\ref{Y-constraint}). Substituting $u_{\mu}$ into $\mathcal{B}_c(u)$ yields
$$
\mathcal{B}_c(u_{\mu}) = \frac{\pi}{2} \left[ \mu^2 (1 + c) + \frac{4}{9 \pi^2 \mu^4} (2^{\alpha} + c) \right] \geq
\frac{3 \pi (2^{\alpha} + c)^{\frac{1}{3}} (1 + c)^{2/3}}{2 (3\pi)^{2/3}},
$$
where the lower bound is found from the minimization of $\mathcal{B}_c(u_{\mu})$ in $\mu$. Therefore, we obtain
$$
0 \leq q_c \leq \frac{3 \pi (2^{\alpha} + c)^{\frac{1}{3}} (1 + c)^{2/3}}{2 (3\pi)^{2/3}},
$$
which shows that $q_c \to 0$ as $c \to -1$.
\end{proof}

The following proposition ensures that $\psi$ is smooth in $x$ and hence
satisfies the boundary-value problem (\ref{ode-bvp}). Note that
the result below is not original since similar results
were reported in \cite{CNP1,hur,lepeli}.
It is reproduced here for the sake of completeness.

\begin{proposition}
\label{regularity}
Assume that $\psi\in H_{\rm per}^\frac{\alpha}{2}(\mathbb{T})$ is a solution of
the stationary equation (\ref{ode-wave}) with $c > -1$ and $b = b(c)$ in the sense of distributions.
Then $\psi \in H_{\rm per}^{\infty}(\mathbb{T})$.
\end{proposition}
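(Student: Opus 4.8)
The plan is a standard elliptic-type bootstrap. Since $\psi \in H^{\alpha/2}_{\rm per}(\mathbb{T}) \subset L^2_{\rm per}(\mathbb{T})$, we have $\psi^2 \in L^1_{\rm per}(\mathbb{T})$, so the stationary equation (\ref{ode-wave}) holds in the sense of distributions and may be rewritten as $(c + D^{\alpha})\psi = \psi^2 - b$, where $\psi^2 - b = \Pi_0 \psi^2$ by the choice (\ref{b-c}). For $c > -1$ the Fourier symbol $c + |m|^{\alpha}$ is strictly positive for every $m \in \mathbb{Z}$, with the only exception $c = 0$, $m = 0$; hence $c + D^{\alpha}$ is boundedly invertible on $L^2_{\rm per}(\mathbb{T})$ (on the subspace of zero-mean functions when $c = 0$, the mean of $\psi$ being then an additive constant that is irrelevant for smoothness). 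Moreover, $(c + D^{\alpha})^{-1}$ is a Fourier multiplier whose symbol decays like $|m|^{-\alpha}$, so it maps $H^{s}_{\rm per}(\mathbb{T})$ boundedly into $H^{s + \alpha}_{\rm per}(\mathbb{T})$ for every $s \in \mathbb{R}$. Consequently any Sobolev regularity of $\psi^2 - b$ is transferred, with a gain of $\alpha$ derivatives, back to $\psi$ through $\psi = (c + D^{\alpha})^{-1}(\psi^2 - b)$.

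The first stage handles the low-regularity range. Setting $s_0 = \alpha/2$ and assuming $\psi \in H^{s_n}_{\rm per}(\mathbb{T})$ with $s_n \leq \tfrac12$, I would invoke the standard one-dimensional product estimate $\|fg\|_{H^{\sigma}} \lesssim \|f\|_{H^{s}} \|g\|_{H^{s}}$, valid whenever $\sigma \leq s$ and $\sigma < 2s - \tfrac12$, to conclude that $\psi^2$, and hence $\psi^2 - b$ since constants are smooth on $\mathbb{T}$, lies in $H^{\sigma}_{\rm per}(\mathbb{T})$ for every $\sigma < 2 s_n - \tfrac12$. Applying $(c + D^{\alpha})^{-1}$ then gives $\psi \in H^{s_{n+1}}_{\rm per}(\mathbb{T})$ with $s_{n+1} = 2 s_n - \tfrac12 + \alpha - \varepsilon$ for an arbitrarily small $\varepsilon > 0$. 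The key computation is that $\alpha > \tfrac13$ forces $s_{n+1} - s_n = s_n - \tfrac12 + \alpha - \varepsilon \geq \tfrac32 \alpha - \tfrac12 - \varepsilon > 0$ once $\varepsilon$ is small, so the Sobolev exponents increase by a fixed positive amount and after finitely many iterations one reaches some exponent $s_N > \tfrac12$.

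In the second stage $H^{s}_{\rm per}(\mathbb{T})$ with $s > \tfrac12$ is a Banach algebra, so $\psi^2 - b \in H^{s}_{\rm per}(\mathbb{T})$, and the mapping property above yields $\psi \in H^{s + \alpha}_{\rm per}(\mathbb{T})$; iterating, $\psi \in H^{s + k\alpha}_{\rm per}(\mathbb{T})$ for every $k \in \mathbb{N}$, that is, $\psi \in H^{\infty}_{\rm per}(\mathbb{T})$. I expect the only genuinely delicate point to be the first stage when $\alpha \in (\tfrac13, \tfrac12]$: there $H^{\alpha/2}_{\rm per}(\mathbb{T})$ is neither a Banach algebra nor continuously embedded into $L^{\infty}_{\rm per}(\mathbb{T})$, so squaring really does cost derivatives, and one must check that this cost is strictly less than the $\alpha$ derivatives regained from inverting $c + D^{\alpha}$ --- which is precisely where the hypothesis $\alpha > \tfrac13$ is used. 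An equivalent route would run the first stage in $L^{p}$-based Sobolev spaces, starting from the embedding $H^{\alpha/2}_{\rm per}(\mathbb{T}) \hookrightarrow L^{p}_{\rm per}(\mathbb{T})$ with $\tfrac1p = \tfrac{1-\alpha}{2}$ and using that $(c + D^{\alpha})^{-1}$ gains $\alpha$ derivatives on the $L^{q}$ scale, which reaches $L^{\infty}_{\rm per}(\mathbb{T})$ in finitely many steps, after which the argument proceeds as before.
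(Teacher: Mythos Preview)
Your argument is correct and is in fact somewhat cleaner than the paper's. The paper proceeds in two separate stages: first it shows $\psi\in L^\infty_{\rm per}(\mathbb{T})$ by working on the Fourier side, using Hausdorff--Young to place $\widehat{\psi^2}$ in suitable $\ell^q$ spaces and then applying H\"older to the identity $\widehat{\psi}(m)=(|m|^\alpha+c)^{-1}\widehat{\psi^2}(m)$; this iteration in $\ell^p$ spaces eventually lands $\widehat{\psi}\in\ell^1$. Only then does the paper bootstrap in $H^s$, using $\psi\in L^\infty$ to control $\|\psi^2\|_{H^s}$. Your route bypasses the $L^\infty$ step entirely by invoking the product estimate $\|\psi^2\|_{H^\sigma}\lesssim\|\psi\|_{H^s}^2$ for $\sigma<2s-\tfrac12$, which is valid even for negative $\sigma$ (via Sobolev embedding and duality), and iterating directly on the $H^s$ scale. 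Both arguments identify the same mechanism --- inverting $c+D^\alpha$ gains $\alpha$ derivatives, squaring costs at most $\tfrac12-s$ derivatives, and $\alpha>\tfrac13$ guarantees a net gain --- and the alternative $L^p$-based route you sketch at the end is essentially the paper's approach. Your version has the advantage of staying within a single scale of spaces throughout.
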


\begin{proof}
In view of the embedding $H_{\rm per}^{s_2}(\mathbb{T})\hookrightarrow H_{\rm per}^{s_1}(\mathbb{T})$, $s_2\geq s_1>0$,
it suffices to assume $\frac{1}{3}<\alpha<\frac{1}{2}$. First, we will prove that $\psi\in L^{\infty}_{\rm per}(\mathbb{T})$.
Indeed, applying the Fourier transform in (\ref{ode-wave}) yields
$$
\widehat{\psi}(m) = \frac{\widehat{\psi^2}(m)}{|m|^{\alpha}+c},\quad m\in\mathbb{Z} \backslash \{0\}.
$$
Since $\psi\in H_{\rm per}^\frac{\alpha}{2}(\mathbb{T})$, it follows that $\psi\in L^p_{\rm per}(\mathbb{T})$ and
$\psi^2 \in L^{\frac{p}{2}}_{\rm per}(\mathbb{T})$, for all $2\leq p\leq \frac{2}{1-\alpha}$.
Hence, by Hausdorff-Young inequality, we have
$\widehat{\psi^2}\in \ell^q$ for all $\frac{1}{\alpha} \leq q \leq \infty$.

Since $c > -1$, we see that $\left(|m|^{\alpha}+c\right)^{-1}\in \ell^p$ for all $p > \frac{1}{\alpha}$.
Let $\varepsilon>0$ be a small number such that $1\leq \frac{2}{1+\alpha+\varepsilon}$. Thus
$$\begin{array}{lllll}
\|\widehat{\psi}\|^{\frac{2}{1+\alpha+\varepsilon}}_{\ell^{\frac{2}{1+\alpha+\varepsilon}}}
\leq\|(\widehat{\psi^2})^{\frac{2}{1+\alpha+\varepsilon}} \|_{\ell^q}\|\left(|m|^{\alpha}+c\right)^{-\frac{2}{1+\alpha+\varepsilon}}\|_{\ell^{q'}},
\end{array}$$
where $q,q'>0$ and $\frac{1}{q} + \frac{1}{q'} = 1$. Next, we consider the smallest $q$ such that
the first term on the right side is finite, that is, $q = \frac{1+\alpha+\varepsilon}{2\alpha}$,
hence $q' = \frac{1+\alpha+\varepsilon}{1-\alpha+\varepsilon}$. The second term on the right side is finite
if $\frac{1}{\alpha} < \frac{2q'}{1+\alpha+\varepsilon}$ which is true if $1+\varepsilon < 3\alpha$.
Note that for every $\alpha>\frac{1}{3}$, one can always find a suitable $\varepsilon>0$.
Under these constraints, we get $\widehat{\psi} \in \ell^{\frac{2}{1+\alpha+\varepsilon}}$
which implies that there exists $\xi\in L^{\frac{2}{1-\alpha-\varepsilon}}_{\rm per}(\mathbb{T})$
such that $\widehat{\xi}=\widehat{\psi}$ (see \cite[page 190]{Zygmund}). Hence, using \cite[Corollary 1.51]{Zygmund}
we obtain $\xi=\psi$ and so $\psi \in L^{p}_{\rm per}(\mathbb{T})$ for $2 \leq p \leq \frac{2}{1-\alpha-\varepsilon}$.
An iterating  procedure gives us
$\widehat{\psi}\in\ell^1$ and thus $\psi\in L^{\infty}_{\rm per}(\mathbb{T})$.\\
\indent Finally, one sees that
$$
\|D^{\alpha}\psi\|_{L^2_{\rm per}}=\left\| (D^{\alpha} + c)^{-1} D^{\alpha} \psi^2 \right\|_{L^2_{\rm per}}
\leq \| \psi^2 \|_{L^2_{\rm per}} \leq \|\psi\|_{L^{\infty}_{\rm per}}\|\psi\|_{L^2_{\rm per}},
$$
which implies $\psi \in H_{\rm per}^{\alpha}(\mathbb{T})$. Furthermore, from the fact that $\widehat{\psi}\in\ell^1$, we have
\begin{eqnarray*}
\| D^{2\alpha} \psi\|_{L^2_{\rm per}} &=& \left\| (D^{\alpha} + c)^{-1} D^{2\alpha} \psi^2 \right\|_{L^2_{\rm per}}
= \left\|\frac{|\cdot|^{2\alpha} \widehat{\psi^2}}{|\cdot|^{\alpha}+c} \right\|_{\ell^2} =
\|(1+|\cdot|^2)^{\frac{\alpha}{2}} (\widehat{\psi}\ast\widehat{\psi}) \|_{\ell^2} \\
&\leq& K_{\alpha}\left[\|\widehat{\psi}\|_{\ell^1}\|\widehat{\psi}\|_{\ell^2}+2\|(\cdot)^{\alpha}\widehat{\psi}\|_{\ell^2}\|\widehat{\psi}\|_{\ell^1}\right],
\end{eqnarray*}
where $K_{\alpha}>0$ is an $\alpha$-dependent constant. After iterations, we conclude that $\psi \in H_{\rm per}^{\infty}(\mathbb{T})$.
\end{proof}

We show next that the periodic waves of the boundary-value problem (\ref{ode-bvp})
with an even, single-lobe profile $\psi$ in the sense of Definition \ref{defilobe} are given
by the Stokes expansion for $c$ near $-1$. Because we reuse the method of Lyapunov--Schmidt reductions from \cite{J}, the results
on the Stokes expansion of the periodic wave $\psi$ are restricted to the values of $\alpha > \frac{1}{2}$.
Similar computations of the Stokes expansions are reported in Theorem 2.1 of \cite{lepeli}.

The small-amplitude (Stokes) expansion for single-lobe periodic waves of the boundary-value problem (\ref{ode-bvp})
is constructed in three steps. First, we present \textit{Galilean transformation} between solutions
of the stationary equation (\ref{ode-wave}). Second,
we obtain Stokes expansion of the normalized stationary equation.
Third, we transform the Stokes expansion of the normalized stationary equation
back to the solutions of the boundary-value problem (\ref{ode-bvp}).

\begin{proposition}\label{prop-galileo}
Let $\psi \in H^{\alpha}_{\rm per}(\mathbb{T})$ be a solution to
the stationary equation (\ref{ode-wave}) with some $(c,b)$. Then,
\begin{equation}
\label{Gal}
\varphi := \psi - \frac{1}{2} \left( c - \sqrt{c^2 + 4b} \right)
\end{equation}
is a solution of the stationary equation
\begin{equation}\label{galilean1}
D^{\alpha} \varphi + \omega \varphi - \varphi^2 = 0, \quad \varphi \in H^{\alpha}_{\rm per}(\mathbb{T}),
\end{equation}
with $\omega := \sqrt{c^2 + 4b}$.
\label{proposition-Gal}
\end{proposition}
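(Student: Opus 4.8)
The proof will be a direct computation: one shifts the profile $\psi$ by a suitable constant to remove the free term $b$ from the stationary equation. The plan is to set $\varphi = \psi - \gamma$ with $\gamma := \frac{1}{2}\left( c - \sqrt{c^2+4b} \right)$; note that this requires $c^2 + 4b \geq 0$, which is automatic in our setting since $b = b(c) \geq 0$ by (\ref{b-c}). Since $\psi \in H^{\alpha}_{\rm per}(\mathbb{T})$ and constant functions belong to $H^{\alpha}_{\rm per}(\mathbb{T})$ on the compact torus, it follows at once that $\varphi \in H^{\alpha}_{\rm per}(\mathbb{T})$.

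First I would substitute $\psi = \varphi + \gamma$ into the stationary equation (\ref{ode-wave}). The key point is that $D^{\alpha}$ annihilates constants, because $\widehat{D^{\alpha} g}(\xi) = |\xi|^{\alpha} \widehat{g}(\xi)$ vanishes at $\xi = 0$ for $\alpha > 0$, so $D^{\alpha}\psi = D^{\alpha}\varphi$. Expanding $(\varphi + \gamma)^2 = \varphi^2 + 2\gamma \varphi + \gamma^2$ and collecting terms then yields
$$
D^{\alpha}\varphi + (c - 2\gamma)\varphi - \varphi^2 + \left( b + c\gamma - \gamma^2 \right) = 0.
$$
Next I would use the defining property of $\gamma$: it is a root of the quadratic $\gamma^2 - c\gamma - b = 0$, so the constant term $b + c\gamma - \gamma^2$ vanishes, while the linear coefficient becomes $c - 2\gamma = c - \left( c - \sqrt{c^2+4b} \right) = \sqrt{c^2+4b} =: \omega$. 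This is exactly the stationary equation (\ref{galilean1}), which completes the argument.

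There is no genuine obstacle in this proposition; it is essentially a one-line change of variables. The only points that require a word of care are that the shift constant $\gamma$ is real (guaranteed by $c^2 + 4b \geq 0$) and that $D^{\alpha}$ indeed kills constant functions, both of which are immediate from the definitions. The choice of the minus sign in $\gamma = \frac{1}{2}\left( c - \sqrt{c^2+4b} \right)$, rather than the plus sign, is precisely what makes the resulting frequency $\omega = \sqrt{c^2+4b}$ nonnegative; the other root would give $\omega = -\sqrt{c^2+4b}$, a legitimate but less convenient normalization.
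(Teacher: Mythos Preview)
Your proof is correct and follows exactly the approach indicated in the paper, which simply states that the result is obtained by direct substitution. You have carried out that substitution in full detail, including the justification that $D^{\alpha}$ annihilates constants and that $\gamma$ is a root of $\gamma^2 - c\gamma - b = 0$.
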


\begin{proof}
The proof is given by direct substitution.
\end{proof}

\begin{proposition}
\label{proposition-small}
For every $\alpha > \frac{1}{2}$, there exists $a_0 > 0$ such that for every $a \in (0,a_0)$
there exists a locally unique, even, single-lobe solution $\varphi$ of the stationary equation (\ref{galilean1})
in the sense of Definition \ref{defilobe}. The pair $(\omega,\varphi) \in \mathbb{R} \times H^{\alpha}_{\rm per}(\mathbb{T})$
is smooth in $a$ and is given by the following Stokes expansion:
\begin{equation}
\label{wave-expansion}
\varphi(x) = 1 + a \cos(x) + a^2 \varphi_2(x) + a^3 \varphi_3(x) + \mathcal{O}(a^4),
\end{equation}
and
\begin{equation}
\label{speed-expansion}
\omega = 1 + \omega_2 a^2 + \mathcal{O}(a^4),
\end{equation}
where the corrections terms \emph{}are defined in (\ref{correction1})--(\ref{correction3}) below.
\end{proposition}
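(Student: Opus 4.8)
The plan is to construct the branch by a Lyapunov--Schmidt reduction (equivalently, Crandall--Rabinowitz bifurcation from a simple eigenvalue) applied to the stationary equation (\ref{galilean1}) near the constant solution. Note first that $\varphi \equiv 1$ solves (\ref{galilean1}) with $\omega = 1$, and the linearization of the left-hand side of (\ref{galilean1}) at this solution is the Fourier multiplier $D^{\alpha} - 1$ with symbol $|m|^{\alpha} - 1$, which vanishes exactly at $m = \pm 1$ since $\alpha > 0$. Working in the subspace of even $2\pi$-periodic functions, the kernel of $D^{\alpha} - 1$ is therefore the one-dimensional span of $\cos(x)$; moreover $D^{\alpha} - 1 \colon H^{\alpha}_{\rm per}(\mathbb{T}) \to L^2_{\rm per}(\mathbb{T})$ is Fredholm of index zero (being a compact perturbation of the isomorphism $\mathrm{Id} + D^{\alpha}$), and it restricts to a bounded invertible operator on $Z := \{ f \in L^2_{\rm per}(\mathbb{T}) : f(-x) = f(x), \ \langle f, \cos \rangle_{L^2_{\rm per}} = 0 \}$ because its symbol is bounded away from $0$ on the modes $\{0\} \cup \{ |m| \geq 2 \}$. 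The restriction $\alpha > \frac12$ enters precisely here, as in \cite{J}: for $\alpha > \frac12$ the space $H^{\alpha}_{\rm per}(\mathbb{T})$ is a Banach algebra, so $F(\varphi,\omega) := D^{\alpha}\varphi + \omega \varphi - \varphi^2$ is a smooth map from the even part of $H^{\alpha}_{\rm per}(\mathbb{T}) \times \mathbb{R}$ into $L^2_{\rm per}(\mathbb{T})$.

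Next I would set $\varphi = 1 + a \cos(x) + \rho$ with $\rho \in Z \cap H^{\alpha}_{\rm per}(\mathbb{T})$, let $P$ and $Q = \mathrm{Id} - P$ be the orthogonal projections onto $\mathrm{span}(\cos)$ and $Z$ (both commuting with $D^{\alpha}$), and split $F(\varphi,\omega) = 0$ into $QF = 0$ and $PF = 0$. The range equation $QF = 0$ has the form $L_{\omega} \rho = \mathcal{N}(a,\omega,\rho)$, where $L_{\omega} := Q(D^{\alpha} + \omega - 2)Q$, viewed as an operator on $Z$, is boundedly invertible for $\omega$ near $1$ and $\mathcal{N}$ is smooth with $\mathcal{N} = O(|\omega - 1| + a^2 + \|\rho\|^2)$; the implicit function theorem then produces a unique smooth $\rho = \rho(a,\omega) \in Z$ near $(a,\omega) = (0,1)$ with $\rho(0,1) = 0$ and $\rho(a,\omega) = O(|\omega - 1| + a^2)$. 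Substituting this into $PF = 0$, the $\cos(x)$-component of $F$ carries an overall factor $a$; dividing it out yields a scalar equation $g(a,\omega) = 0$ that extends smoothly across $a = 0$ (the quadratic-in-$\rho$ remainder is divisible by $a$ since $\rho(0,\omega)$ is a pure mode-$0$ function), with $g(0,1) = 0$ and $\partial_{\omega} g(0,1) \neq 0$. A second application of the implicit function theorem gives $\omega = \omega(a)$ smooth with $\omega(0) = 1$, and then $\rho(a) := \rho(a,\omega(a)) = O(a^2)$. This establishes the locally unique smooth branch with $\varphi(x) = 1 + a\cos(x) + O(a^2)$ claimed in (\ref{wave-expansion})--(\ref{speed-expansion}).

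The correction terms are then read off by inserting the ansatz into (\ref{galilean1}) and matching powers of $a$ mode by mode. At order $a^2$ one gets $(D^{\alpha} - 1)\varphi_2 = \cos^2(x) - \omega_2$, which (after the normalization that $\varphi_2, \varphi_3, \dots$ carry no $\cos(x)$-component) determines $\varphi_2$, living in modes $0$ and $2$, in terms of the still-free constant $\omega_2$; at order $a^3$ the Fredholm solvability condition for $\varphi_3$ — vanishing of the $\cos(x)$-component of $2\varphi_2\cos(x) - \omega_2\cos(x)$ — fixes $\omega_2 = 1 - \tfrac{1}{2(2^{\alpha}-1)}$, which is positive for $\alpha > \alpha_0$ and negative for $\alpha < \alpha_0$, and simultaneously determines $\varphi_3$ in modes $1$ and $3$; these are the formulas recorded in (\ref{correction1})--(\ref{correction3}). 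The invariance of (\ref{galilean1}) under $x \mapsto x + \pi$, which sends $a \mapsto -a$, together with local uniqueness forces $\omega(\cdot)$ to be even in $a$, so that $\omega = 1 + \omega_2 a^2 + O(a^4)$ with no odd powers. For the single-lobe property I would note that $\varphi'(x) = -a\sin(x) + O(a^2)$, so for $a \neq 0$ small the zeros of $\varphi'$ are, by the implicit function theorem together with a uniform lower bound on $|\sin(x)|$ away from $\{0,\pi\}$, exactly two simple ones near $x = 0$ and $x = \pi$; since $\varphi$ is even and $\varphi''(0) = -a + O(a^2) \neq 0$, after the admissible sign choice (equivalently, translation by $\pi$) the profile is an even single-lobe profile with maximum at $x = 0$ in the sense of Definition \ref{defilobe}.

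The routine parts are the two applications of the implicit function theorem and the algebra of the Fourier-mode hierarchy. The one point requiring care is verifying that the scalar bifurcation function $g$ genuinely extends smoothly across $a = 0$ after division by $a$, with $\partial_\omega g(0,1) \neq 0$, so that the second reduction applies; and the substantive — though not difficult — payoff of the expansion is the explicit value of $\omega_2$, whose sign change at $\alpha = \alpha_0$ distinguishes the supercritical ($\alpha > \alpha_0$) from the subcritical ($\alpha < \alpha_0$) pitchfork and thereby feeds into the later fold-bifurcation analysis.
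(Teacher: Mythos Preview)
Your proposal is correct and follows essentially the same approach as the paper: a Lyapunov--Schmidt reduction at the simple kernel $\mathrm{span}\{\cos\}$ of $D^{\alpha}-1$ (for which the paper simply cites \cite{J}), followed by the mode-by-mode Stokes hierarchy yielding (\ref{correction1})--(\ref{correction3}). If anything, you supply more detail than the paper does---the divisibility of $PF$ by $a$, the $x\mapsto x+\pi$ symmetry forcing $\omega$ to be even in $a$, and the single-lobe argument via $\varphi'(x)=-a\sin x+O(a^2)$ (for which you should briefly invoke the bootstrapped regularity of Proposition~\ref{regularity} so that $\varphi'$ is classically defined).
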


\begin{proof}
We give algorithmic computations of the higher-order coefficients to the periodic wave
by using the classical Stokes expansion:
$$
\varphi(x) = 1 + \sum_{k=1}^{\infty} a^k \varphi_k(x), \quad \omega = 1 + \sum_{k=1}^{\infty} \omega_{2k} a^{2k}.
$$
The correction terms satisfy recursively,
\begin{eqnarray*}
\left\{ \begin{array}{l}
\mathcal{O}(a) \; : \quad (D^{\alpha} - 1) \varphi_1 = 0, \\
\mathcal{O}(a^2) : \quad (D^{\alpha} - 1) \varphi_2 + \omega_2 - \varphi_1^2 = 0, \\
\mathcal{O}(a^3) : \quad (D^{\alpha} - 1) \varphi_3 + \omega_2 \varphi_1 - 2 \varphi_1 \varphi_2 = 0.
\end{array} \right.
\end{eqnarray*}
Since the periodic wave has a single-lobe profile $\varphi$ with the global maximum at $x = 0$,
we select uniquely $\varphi_1(x) = \cos(x)$ since ${\rm Ker}_{\rm even}(D^{\alpha} - 1) = {\rm span}\{ \cos(\cdot) \}$
in the space of even functions in $L^2_{\rm per}(\mathbb{T})$.
In order to select uniquely all other corrections to the Stokes expansion (\ref{wave-expansion}),
we require the corrections terms $\{ \varphi_k \}_{k \geq 2}$ to be orthogonal to $\varphi_1$ in $L^2_{\rm per}(\mathbb{T})$.
Solving the inhomogeneous equation at $\mathcal{O}(a^2)$ yields the exact solution in $H^{\alpha}_{\rm per}(\mathbb{T})$:
\begin{equation}
\label{correction1}
\varphi_2(x) = \omega_2 - \frac{1}{2} + \frac{1}{2(2^{\alpha} - 1)} \cos(2x),
\end{equation}
where $\omega_2$ is to be determined.
The inhomogeneous equation at $\mathcal{O}(a^3)$ admits a solution $\varphi_3 \in H^{\alpha}_{\rm per}(\mathbb{T})$ if and only
if the right-hand side is orthogonal to $\varphi_1$, which selects uniquely the correction $\omega_2$ by
\begin{equation}
\label{correction2}
\omega_2 = 1 - \frac{1}{2(2^{\alpha} - 1)}.
\end{equation}
After the resonant term is removed, the inhomogeneous equation at $\mathcal{O}(a^3)$ yields the exact solution in $H^{\alpha}_{\rm per}(\mathbb{T})$:
\begin{equation}
\label{correction3}
\varphi_3(x) = \frac{1}{2 (2^{\alpha}-1) (3^{\alpha}-1)} \cos(3x).
\end{equation}
Justification of the existence, uniqueness, and analyticity of the Stokes expansions (\ref{wave-expansion}) and (\ref{speed-expansion})
is performed with the method of Lyapunov--Schmidt reductions for $\alpha > \frac{1}{2}$, see Lemma 2.1 and Theorem A.1 in \cite{J}.
\end{proof}

\begin{corollary}
\label{cor-Stokes}
For every $\alpha \in \left(\frac{1}{2},2\right]$, there exists $c_0 \in (-1,\infty)$ such that
the solution of the boundary-value problem (\ref{ode-bvp}) for every $c \in (-1,c_0)$
with an even, single-lobe profile $\psi$ in
Theorem \ref{minlem} and Corollary \ref{cor-existence} is given by
the following Stokes expansion:
\begin{equation}
\label{psi-expansion}
\psi = a \cos(x) + \frac{a^2}{2 (2^{\alpha}-1)} \cos(2x) + \frac{a^3}{2(2^{\alpha}-1)(3^{\alpha}-1)} \cos(3x) + \mathcal{O}(a^4)
\end{equation}
with parameters
\begin{equation}
\label{c-expansion}
c = -1 + \frac{1}{2(2^{\alpha} -1)} a^2 + \mathcal{O}(a^4)
\end{equation}
and
\begin{equation}
\label{b-expansion}
b(c) = \frac{1}{2} a^2 + \mathcal{O}(a^4).
\end{equation}
\end{corollary}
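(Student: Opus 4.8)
The plan is to build the solution claimed in Corollary~\ref{cor-Stokes} directly from the small-amplitude family of Proposition~\ref{proposition-small} by inverting the Galilean transformation of Proposition~\ref{prop-galileo}, to read off the parameters $(c,b)$ and the profile from the Stokes expansion \eqref{wave-expansion}--\eqref{correction3}, and then to identify this small-amplitude solution with the variational one built in Theorem~\ref{minlem} and Corollary~\ref{cor-existence}. For the construction, fix $\alpha\in(\tfrac12,2]$ and let $(\omega,\varphi)$ be the even, single-lobe solution of \eqref{galilean1} from Proposition~\ref{proposition-small}, parametrized by $a\in(0,a_0)$. Setting $\bar\varphi:=\frac{1}{2\pi}\int_{-\pi}^{\pi}\varphi\,dx$ and $\psi:=\varphi-\bar\varphi$, substitution of $\varphi=\psi+\bar\varphi$ into \eqref{galilean1} shows that $\psi$ solves the stationary equation \eqref{ode-wave} with $c=\omega-2\bar\varphi$ and $b=\bar\varphi(\omega-\bar\varphi)$; since $c^2+4b=\omega^2$ this is exactly the inverse of the transformation in Proposition~\ref{prop-galileo}. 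Integrating \eqref{ode-wave} over $\mathbb{T}$ and using $\int D^{\alpha}\psi\,dx=0=\int\psi\,dx$ gives $b=\frac{1}{2\pi}\int_{-\pi}^{\pi}\psi^2\,dx=b(c)$, so $\psi$ solves the boundary-value problem \eqref{ode-bvp} and inherits the even single-lobe profile of $\varphi$. A direct computation from \eqref{wave-expansion}, \eqref{correction1}, \eqref{correction3} gives $\bar\varphi=1+(\omega_2-\tfrac12)a^2+\mathcal{O}(a^4)$, so that $\psi=\varphi-\bar\varphi$ has the expansion \eqref{psi-expansion}; combined with \eqref{correction2} this yields $c=-1+\frac{1}{2(2^{\alpha}-1)}a^2+\mathcal{O}(a^4)$, i.e.\ \eqref{c-expansion}, and $b=b(c)=\frac{1}{2\pi}\int\psi^2\,dx=\tfrac12 a^2+\mathcal{O}(a^4)$, i.e.\ \eqref{b-expansion}. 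Since $2^{\alpha}-1>0$, the leading coefficient in \eqref{c-expansion} is positive, so $a\mapsto c$ is strictly increasing on some $(0,a_0')\subset(0,a_0)$ with $c\to-1$ as $a\to0^+$; put $c_0:=c(a_0')>-1$, so that $a\mapsto c$ is a bijection of $(0,a_0')$ onto $(-1,c_0)$ and \eqref{ode-bvp} has exactly one small-amplitude even single-lobe solution for each $c\in(-1,c_0)$.

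It remains to identify this solution with the variational one $\psi_c=C_1\phi_c$ produced by Theorem~\ref{minlem} and Corollary~\ref{cor-existence}, where $\phi_c\in Y_0$ is a ground state and $C_1=2\mathcal{B}_c(\phi_c)=2q_c$. Pairing \eqref{ode-wave} with $\psi_c$ and using the zero-mean constraint gives $\int_{-\pi}^{\pi}(D^{\alpha/2}\psi_c)^2\,dx+c\|\psi_c\|_{L^2_{\rm per}}^2=\int_{-\pi}^{\pi}\psi_c^3\,dx=C_1^3=8q_c^3$ and $\mathcal{B}_c(\psi_c)=4q_c^3$. By Lemma~\ref{lemma-continuity} we have $q_c\to0$ as $c\to-1^+$, and the explicit estimate in its proof gives $q_c\le K(1+c)^{2/3}$ near $c=-1$ for some $K>0$; combining this with \eqref{positivity} yields $\|\psi_c\|_{L^2_{\rm per}}^2\le\frac{2\mathcal{B}_c(\psi_c)}{1+c}=\frac{8q_c^3}{1+c}\le 8K^3(1+c)$ and then $\int(D^{\alpha/2}\psi_c)^2\,dx=8q_c^3-c\|\psi_c\|_{L^2_{\rm per}}^2=\mathcal{O}(1+c)$, so $\|\psi_c\|_{H^{\alpha/2}_{\rm per}}\to0$ as $c\to-1^+$. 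Since $\alpha>\tfrac12$, the embedding $H^{\alpha/2}_{\rm per}(\mathbb{T})\hookrightarrow L^4_{\rm per}(\mathbb{T})$ together with the equation $D^{\alpha}\psi_c=\Pi_0\psi_c^2-c\psi_c$ upgrades this to $\|\psi_c\|_{H^{\alpha}_{\rm per}}\to0$. Hence $b(c)=\frac{1}{2\pi}\|\psi_c\|_{L^2_{\rm per}}^2\to0$, $\omega_c:=\sqrt{c^2+4b(c)}\to1$, and the Galilean transform $\varphi_c:=\psi_c-\tfrac12(c-\omega_c)$ --- a non-constant even single-lobe solution of \eqref{galilean1} --- tends to the constant $1$ in $H^{\alpha}_{\rm per}(\mathbb{T})$ as $c\to-1^+$. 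By the local uniqueness in Proposition~\ref{proposition-small} (the Lyapunov--Schmidt reduction near $(\omega,\varphi)=(1,1)$ identifies every non-constant even single-lobe solution of \eqref{galilean1} in a small $H^{\alpha}_{\rm per}$-neighborhood of $\varphi\equiv1$ with a point of the Stokes branch \eqref{wave-expansion}), for $c$ close enough to $-1$ the solution $\varphi_c$ must coincide with the Stokes solution of amplitude $a_c:=\frac1\pi\int_{-\pi}^{\pi}\varphi_c\cos x\,dx\to0$; since both have speed $c$ and $a\mapsto c(a)$ is injective, $a_c$ is the unique root of $c(a_c)=c$, and therefore $\psi_c=\varphi_c-\bar\varphi_c$ is precisely the solution described by \eqref{psi-expansion}--\eqref{b-expansion}. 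Shrinking $c_0$ if necessary to stay inside this neighborhood finishes the proof.

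The delicate point is this identification, and within it the quantitative decay $\|\psi_c\|_{H^{\alpha}_{\rm per}}\to0$ as $c\to-1^+$. This is not automatic: the ground state $\phi_c\in Y_0$ carries the fixed normalization $\int\phi_c^3\,dx=1$ and does not vanish, so one must use not only the continuity $q_c\to0$ of Lemma~\ref{lemma-continuity} but also its quantitative rate $q_c=\mathcal{O}((1+c)^{2/3})$ to control the rescaling factor $C_1=2q_c$ and thereby push $\varphi_c$ into the region where the Lyapunov--Schmidt reduction underlying Proposition~\ref{proposition-small} yields uniqueness.
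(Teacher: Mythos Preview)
Your proof is correct and follows essentially the same route as the paper: invert the Galilean transformation of Proposition~\ref{prop-galileo} on the Stokes branch of Proposition~\ref{proposition-small} to obtain \eqref{psi-expansion}--\eqref{b-expansion}, then identify the resulting small-amplitude family with the variational solutions of Theorem~\ref{minlem}/Corollary~\ref{cor-existence} by showing the latter collapse to the bifurcation point as $c\to-1^+$ and invoking the local uniqueness from Lyapunov--Schmidt. The paper's argument is the same but terser; in particular it asserts that $\mathcal{B}_c(\phi)\to 0$ forces $\|\psi\|_{L^2_{\rm per}}\to 0$ without spelling out why, whereas you correctly note that merely $q_c\to 0$ does not suffice (since $\|\psi_c\|_{L^2}^2\le 8q_c^3/(1+c)$) and supply the quantitative rate $q_c=\mathcal{O}((1+c)^{2/3})$ from the proof of Lemma~\ref{lemma-continuity}, together with the upgrade to $H^{\alpha}_{\rm per}$ needed to land in the neighborhood where Lyapunov--Schmidt uniqueness applies.
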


\begin{proof}
We apply the Galilean transformation (\ref{Gal}) of Proposition \ref{proposition-Gal}
to the Stokes expansion (\ref{wave-expansion}) and (\ref{speed-expansion}) in Proposition \ref{proposition-small}.
Therefore, we define
\begin{equation}
\label{transformation-formula}
\psi = \Pi_0 \varphi, \quad c = \omega - \frac{1}{\pi} \int_{-\pi}^{\pi} \varphi dx, \quad
b(c) = \frac{1}{4} (\omega^2 - c^2)
\end{equation}
and obtain the Stokes expansion (\ref{psi-expansion}), (\ref{c-expansion}), and (\ref{b-expansion})
for solutions of the boundary-value problem (\ref{ode-bvp}).

It follows from (\ref{psi-expansion}) and (\ref{c-expansion}) that
$\| \psi \|_{L^2_{\rm per}} \to 0$ as $c \to -1$. Since the Stokes expansion (\ref{wave-expansion})
for the even, single-lobe solution $\psi$ is locally unique by Proposition \ref{proposition-small}
and $\mathcal{B}_c(\phi) \to 0$ as $c \to -1$ by Lemma \ref{lemma-continuity} implies that 
$\| \psi \|_{L^2_{\rm per}} \to 0$ as $c \to -1$, the small-amplitude
periodic wave (\ref{psi-expansion}) with an even, single-lobe profile $\psi$
coincides as $c \to -1$ with the family of minimizers in Theorem \ref{minlem} and
Corollary \ref{cor-existence} given by $\psi = 2 \mathcal{B}_c(\phi) \phi$.
\end{proof}

\begin{remark}
\label{remark12}
It follows from (\ref{correction2}) that $\omega_2 > 0$ if and only if $\alpha > \alpha_0$, where
$$
\alpha_0:=\frac{\log3}{\log2}-1\approx 0.585.
$$
It follows from the expansions (\ref{psi-expansion}), (\ref{c-expansion}), and (\ref{b-expansion}) that
the threshold $\alpha_0$ does not show up in the Stokes expansion of the solution $\psi$
to the boundary-value problem (\ref{ode-bvp}).
\end{remark}

\begin{remark}
Employing Krasnoselskii's Fixed Point Theorem, the existence and uniqueness of
solutions $\varphi$ to the stationary equation (\ref{galilean1}) with a positive, even, single-lobe profile $\psi$
was proven for every $\alpha \in (\alpha_0,2]$ and
$\omega \in (1,\infty)$ in Theorem 2.2 of \cite{lepeli}. The proof of Theorem 2.2 in \cite{lepeli} relies on the assumption
that the kernel of the Jacobian operator is one-dimensional. The latter assumption is proven
in Proposition 3.1 in \cite{hur} if the minimizers of energy $E(u)$ subject to fixed momentum
$F(u)$ and mass $M(u)$ are smooth with respect to the Lagrange multipliers $c$ and $b$.
The latter condition is however false for $\alpha < \alpha_0$ (see Remark \ref{rem-false}).
\end{remark}

\section{Smooth continuation of periodic waves in $c$}
\label{sec-spectrum}

Here we find a sharp condition for a smooth continuation of solutions $\psi$
to the boundary-value problem (\ref{ode-bvp}) with respect to the parameter $c$ in $(-1,\infty)$.
Because we use the oscillation theory from \cite{hur}, the results on the smooth continuation
of periodic waves with respect to wave speed $c$ are limited to the interval $\alpha \in (\frac{1}{3},2]$
and to the periodic waves with an even, single-lobe profile $\psi$.

Let $\psi \in H^{\infty}_{\rm per}(\mathbb{T})$ be a solution to the boundary-value problem (\ref{ode-bvp})
for some $c \in (-1,\infty)$ obtained with Theorem \ref{minlem}, Corollary \ref{cor-existence}, and Proposition \ref{regularity}.
The solution has an even, single-lobe profile $\psi$ in the sense of Definition \ref{defilobe}.
The linearized operator $\mathcal{L}$ at $\psi$ is given by (\ref{operator}), which we rewrite again as
the following self-adjoint operator:
\begin{equation}\label{operator-c}
\mathcal{L} = D^{\alpha} + c - 2\psi : \quad H^{\alpha}_{\rm per}(\mathbb{T}) \subset L^2_{\rm per}(\mathbb{T}) \to L^2_{\rm per}(\mathbb{T}).
\end{equation}
For continuation of the solution $\psi \in H^{\infty}_{\rm per}(\mathbb{T})$ to the boundary-value problem
(\ref{ode-bvp}) in $c$, we need to determine the multiplicity of the zero eigenvalue of $\mathcal{L}$ denoted as $z(\mathcal{L})$.
For spectral stability of the periodic wave $\psi$, we also need to determine the number of
negative eigenvalues of $\mathcal{L}$ with the account of their multiplicities denoted as $n(\mathcal{L})$.

It follows by direct computations from the boundary-value problem (\ref{ode-bvp}) that
\begin{eqnarray}
\label{range-1}
\mathcal{L} \psi = - \psi^2 - b(c)
\end{eqnarray}
and
\begin{eqnarray}
\label{range-2}
\mathcal{L} 1 = -2 \psi + c.
\end{eqnarray}
By the translational symmetry, we always have $\mathcal{L} \partial_x \psi = 0$.
However, the main question is whether ${\rm Ker}(\mathcal{L}) = {\rm span}(\partial_x \psi)$, that is,
if $z(\mathcal{L}) = 1$. This question was answered in \cite{hur} for $\alpha \in (\frac{1}{3},2]$,
where the following result was obtained using Sturm's oscillation theory for fractional derivative operators.

\begin{proposition}
\label{prop-nodal}
Let $\alpha \in (\frac{1}{3},2]$ and $\psi \in H^{\infty}_{\rm per}(\mathbb{T})$ be an even, single-lobe periodic wave.
An eigenfunction of $\mathcal{L}$ in (\ref{operator-c})
corresponding to the $n$-th eigenvalue of $\mathcal{L}$ for $n = 1,2,3$ changes its sign at most
$2(n-1)$ times over $\mathbb{T}$.
\end{proposition}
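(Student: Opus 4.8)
The plan is to follow the Sturm-type oscillation theory for fractional Schrödinger operators developed in \cite{hur}, adapted to the periodic single-lobe setting. First I would recall the key structural fact: since $\psi$ is an even, single-lobe profile with maximum at $x=0$, its derivative $\partial_x \psi$ vanishes exactly at $x=0$ and $x=\pm\pi$ and is nonzero elsewhere on $\mathbb{T}$; thus $\partial_x \psi$ changes sign exactly twice on $\mathbb{T}$. By the translational symmetry we already know $\mathcal{L}\,\partial_x\psi = 0$, so $\partial_x\psi$ is an eigenfunction of $\mathcal{L}$ at the eigenvalue $0$. The heart of the argument is to identify where in the ordered spectrum $\lambda_1 \le \lambda_2 \le \lambda_3 \le \cdots$ of $\mathcal{L}$ this eigenvalue $0$ sits, and then to transfer the known nodal count of $\partial_x\psi$ to a general statement about the $n$-th eigenfunction for $n=1,2,3$.

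The main steps, in order, are as follows. (i) Establish that the lowest eigenvalue $\lambda_1$ of $\mathcal{L}$ is simple with a positive (hence nonvanishing, zero-sign-change) eigenfunction; this is the Perron--Frobenius / ground-state property for $D^{\alpha}+c-2\psi$, valid for $\alpha\in(\tfrac13,2]$ because the semigroup generated by $-D^\alpha$ is positivity-improving on $\mathbb{T}$ (the kernel has strictly positive Fourier coefficients, or one invokes the nonlocal maximum principle used in \cite{hur}). (ii) Invoke the fractional Sturm oscillation result of \cite{hur}: for operators of this type on $\mathbb{T}$ with a single-lobe potential, the eigenfunction associated with $\lambda_n$ has \emph{at most} $2(n-1)$ sign changes on $\mathbb{T}$ for $n=1,2,3$ --- this is precisely the statement being quoted, so the work is to verify that the hypotheses of that oscillation theorem (evenness and single-lobe shape of $\psi$, smoothness $\psi\in H^\infty_{\rm per}$, and the range restriction $\alpha>\tfrac13$ needed for the comparison argument) are all met here. (iii) As a consistency check and to pin down the role of $\partial_x\psi$: since $\partial_x\psi$ has exactly two sign changes, the bound at $n=2$ is saturated, which is compatible with $0$ being either $\lambda_2$ or $\lambda_3$; combined with $\lambda_1<0$ (forced by testing $\mathcal{L}$ on the constant-free part of $\psi$ itself via \eqref{range-1}, or on a suitable trial function), one concludes $0\in\{\lambda_2,\lambda_3\}$, which is exactly the dichotomy exploited later in Lemma~\ref{propL}.

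The substantive obstacle is step (ii): making the fractional Sturm oscillation theory rigorous. Unlike the classical $\alpha=2$ case, where the count follows from the ODE Wronskian/Sturm comparison, the nonlocal operator $D^\alpha$ has no local uniqueness theory, so one cannot argue pointwise between consecutive zeros. The argument in \cite{hur} instead proceeds by a continuation/deformation in $\alpha$ (or in the amplitude parameter $a$) starting from a reference operator whose nodal structure is explicit, tracking eigenvalue crossings and showing no sign change is created or destroyed along the way, together with a nodal-domain-type count valid only for the first three eigenvalues (which is why $n$ is restricted to $1,2,3$). Since the excerpt allows me to cite \cite{hur}, the honest plan is to reduce the proposition to the precise oscillation theorem proved there --- verifying that $\psi$ here, produced by the variational problem \eqref{infB} and smoothed by Proposition~\ref{regularity}, is an admissible single-lobe potential for that theorem --- rather than to reprove the nonlocal Sturm theory from scratch.
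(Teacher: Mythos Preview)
Your proposal is correct and takes essentially the same approach as the paper: the paper's entire proof consists of citing Lemma~3.2 of \cite{hur} (with the detailed argument deferred to Appendix~A there), and your plan likewise reduces the statement to that oscillation result after verifying the hypotheses. Your write-up simply supplies more context and consistency checks than the paper does; note, though, that your sketch of the mechanism in \cite{hur} as a ``continuation/deformation in $\alpha$'' is not quite accurate---the actual argument there is a resolvent/Green's-function comparison in the spirit of Frank--Lenzmann---so if you keep that paragraph, correct the description or simply cite the lemma without speculating on its internals.
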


\begin{proof}
The result is formulated as Lemma 3.2 in \cite{hur} and is proved in Appendix A.
\end{proof}

\begin{corollary}
\label{cor-nodal}
Assume $\psi$ be an even, single-lobe periodic wave obtained with Theorem \ref{minlem}, Corollary \ref{cor-existence},
and Proposition \ref{regularity} for $\alpha \in (\frac{1}{3},2]$ and $c \in (-1,\infty)$.
Then, $n(\mathcal{L}) \in \{1,2\}$ and $z(\mathcal{L}) \in \{1,2\}$.
\end{corollary}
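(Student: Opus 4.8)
The plan is to combine Proposition \ref{prop-nodal} on the nodal structure of eigenfunctions with the explicit identities (\ref{range-1}) and (\ref{range-2}), which exhibit two concrete vectors in the image of $\mathcal{L}$. First I would use the single-lobe hypothesis: since $\psi$ is even and single-lobe with its maximum at $x=0$, the derivative $\partial_x\psi$ is odd and changes sign exactly twice on $\mathbb{T}$ (once at $x=0$ and once at $x=\pm\pi$). Because $\mathcal{L}\partial_x\psi = 0$ by translational symmetry, $\partial_x\psi$ is an eigenfunction for the zero eigenvalue with exactly two sign changes; by Proposition \ref{prop-nodal}, a first eigenfunction (the ground state) changes sign $0$ times and a second eigenfunction changes sign at most $2$ times, so the eigenfunction $\partial_x\psi$ with $2$ sign changes cannot correspond to the first eigenvalue. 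Hence the zero eigenvalue is at least the second eigenvalue of $\mathcal{L}$, which immediately gives $n(\mathcal{L}) \geq 1$.

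Next I would bound $n(\mathcal{L})$ and $z(\mathcal{L})$ from above. Suppose for contradiction that $n(\mathcal{L}) \geq 3$, i.e. $\mathcal{L}$ has at least three eigenvalues (counted with multiplicity) strictly below zero; then zero is at least the fourth eigenvalue of $\mathcal{L}$, so $\partial_x\psi$ would be an eigenfunction associated to the $n$-th eigenvalue with $n \geq 4$. Proposition \ref{prop-nodal} as stated controls only $n = 1,2,3$, so I would argue instead that if zero is the fourth eigenvalue or higher, then $\mathcal{L}$ restricted to the orthogonal complement of a suitable low-dimensional subspace would still be nonnegative, forcing a third eigenfunction with at most $4$ sign changes; but the cleanest route is: the second and third eigenvalues are $\leq 0$, and if both were strictly negative together with the first, the zero eigenvalue would be at least fourth — then by the oscillation bound applied to $n=3$ (third eigenfunction, at most $4$ sign changes) combined with a counting/interlacing argument using the two explicit relations (\ref{range-1})–(\ref{range-2}) one derives a contradiction with the fact that $\psi$ and $1$ each have at most $2$, respectively $0$, internal sign changes in a suitable decomposition. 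Concretely, since $\langle \mathcal{L}\psi, 1\rangle = -\int(\psi^2 + b)\,dx$ and $\langle \mathcal{L}1,1\rangle = \int(c - 2\psi)\,dx = 2\pi c$ by the zero-mean property of $\psi$, one can test the quadratic form of $\mathcal{L}$ on the two-dimensional span of $\{1,\psi\}$ and, using that $\partial_x\psi \perp \{1,\psi\}_{\text{even}}$ (parity) together with $\mathcal{L}\partial_x\psi = 0$, conclude by the min-max principle that $\mathcal{L}$ has at most two nonpositive eigenvalues in the even subspace; since zero already occupies the odd sector via $\partial_x\psi$ and Proposition \ref{prop-nodal} forbids a second zero-eigenfunction with too few sign changes, the total count is pinned to $n(\mathcal{L}) \leq 2$ and $z(\mathcal{L}) \leq 2$.

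Finally, $z(\mathcal{L}) \geq 1$ is immediate from $\mathcal{L}\partial_x\psi = 0$. Combining the lower and upper bounds yields $n(\mathcal{L}) \in \{1,2\}$ and $z(\mathcal{L}) \in \{1,2\}$, as claimed. The main obstacle I anticipate is the upper bound $n(\mathcal{L}) \leq 2$: Proposition \ref{prop-nodal} only controls eigenfunctions for $n \leq 3$, so ruling out $n(\mathcal{L}) \geq 3$ requires extracting from the single-lobe structure and the identities (\ref{range-1})–(\ref{range-2}) enough control on the quadratic form of $\mathcal{L}$ on an explicit test subspace (built from $1$, $\psi$, and $\partial_x\psi$) to invoke min-max; carefully separating the even and odd sectors and tracking which eigenvalue $\partial_x\psi$ belongs to is the delicate point, and one must be sure the oscillation bound is applied in the correct parity class.
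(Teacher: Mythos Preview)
Your upper bound $n(\mathcal{L}) \leq 2$ has a genuine gap, and you correctly flag it as the main obstacle. Testing the quadratic form of $\mathcal{L}$ on ${\rm span}\{1,\psi\}$ and invoking min--max cannot produce an \emph{upper} bound on $n(\mathcal{L})$: the min--max principle yields $n(\mathcal{L}) \geq k$ from a $k$-dimensional subspace on which the form is negative, not $n(\mathcal{L}) \leq k$. To bound $n(\mathcal{L})$ from above one needs a subspace of \emph{codimension} two on which $\mathcal{L} \geq 0$, and nothing in the identities (\ref{range-1})--(\ref{range-2}) or in the nodal count supplies this. The missing ingredient is precisely the variational origin of $\psi$: by Theorem~\ref{minlem} and Corollary~\ref{cor-existence}, $\psi$ is (up to scaling) a constrained minimizer of $\mathcal{B}_c$ on $Y_0 = \{\int u^3 = 1,\ \int u = 0\}$, and since $\mathcal{L}$ is the Hessian of the associated Lagrangian $G$, the second-order necessary condition at the minimizer gives
\[
\mathcal{L}\big|_{\{1,\psi^2\}^\perp} \;\geq\; 0
\]
(note the constraint gradients are $1$ and $\psi^2$, \emph{not} $1$ and $\psi$). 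Courant's principle then yields $n(\mathcal{L}) \leq 2$ in one line. Your proposal never invokes this minimizing property, and without it the upper bound is not available.

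For $z(\mathcal{L}) \leq 2$ your sketch is also incomplete. The paper splits into even and odd parity and uses an additional input, namely (L1) of Lemma~3.3 in \cite{hur}, asserting that $0$ is the \emph{lowest} eigenvalue of $\mathcal{L}$ on the odd subspace; thus the odd sector contributes exactly one zero mode and nothing below it. Then, working case-by-case on the already established $n(\mathcal{L}) \in \{1,2\}$, Proposition~\ref{prop-nodal} is applied in the even sector to bound the number of even zero-eigenfunctions by one. Your phrase ``forbids a second zero-eigenfunction with too few sign changes'' inverts the direction of the oscillation bound (Proposition~\ref{prop-nodal} bounds sign changes from \emph{above}, not below) and does not carry out the parity bookkeeping. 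Your lower bound $n(\mathcal{L}) \geq 1$ via the two sign changes of $\partial_x\psi$ is correct in spirit, though the paper obtains it more directly from $\langle \mathcal{L}\psi,\psi\rangle = -\int_{-\pi}^{\pi} \psi^3\,dx < 0$, which is immediate from the cubic constraint in $Y_0$.
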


\begin{proof}
It follows by (\ref{range-1}) that
\begin{equation}
\label{rel-neg}
\langle \mathcal{L} \psi, \psi \rangle = - \int_{-\pi}^{\pi} \psi^3 dx = -8 \mathcal{B}_c(\phi)^3 < 0,
\end{equation}
thanks to (\ref{Y-constraint}), (\ref{positivity}), and (\ref{lagrange-constraints}). Therefore, $n(\mathcal{L}) \geq 1$.
Thanks to the variational formulation (\ref{infB})--(\ref{Y-constraint}) and Theorem \ref{minlem},
$\psi \in H^{\infty}_{\rm per}(\mathbb{T})$ is a minimizer of $G(u)$ in (\ref{lyafun}) for every $c \in (-1,\infty)$
subject to two constraints in (\ref{Y-constraint}). Since $\mathcal{L}$ is the Hessian operator for $G(u)$ in (\ref{operator}), we have
\begin{equation}\label{lpositive}
\mathcal{L} \big|_{\{1,\psi^2\}^{\bot}} \geq 0.
\end{equation}
By Courant's Mini-Max Principle, $n(\mathcal{L}) \leq 2$, so that $n(\mathcal{L}) \in \{1,2\}$ is proven.

Since $\psi$ is even, $L^2_{\rm per}(\mathbb{T})$ is decomposed into an orthogonal sum
of an even and odd subspaces. By (L1) in Lemma 3.3 in \cite{hur}, $0$ is the lowest eigenvalue of $\mathcal{L}$
in the subspace of odd functions in $L^2_{\rm per}(\mathbb{T})$ with the eigenfunction $\partial_x \psi$ with a single node.
Hence, $z(\mathcal{L}) \geq 1$.
In the subspace of even functions in $L^2_{\rm per}(\mathbb{T})$, the number of nodes is even.
If $n(\mathcal{L}) = 1$, then $0$ is the second eigenvalue of $\mathcal{L}$. By Proposition \ref{prop-nodal},
the corresponding even function may have at most two nodes, hence there may be at most one such eigenfunction
of $\mathcal{L}$  for the zero eigenvalue in the subspace of even functions in $L^2_{\rm per}(\mathbb{T})$.
If $n(\mathcal{L}) = 2$, then the second (negative)
eigenvalue has an even eigenfunction with exactly two nodes, whereas $0$ is the third eigenvalue of $\mathcal{L}$.
By Proposition \ref{prop-nodal},
the corresponding even function for the zero eigenvalue may have at most four nodes, hence there may be at most one such eigenfunction
of $\mathcal{L}$ in the subspace of even functions in $L^2_{\rm per}(\mathbb{T})$.
In both cases, $z(\mathcal{L}) \leq 2$, so that  $z(\mathcal{L}) \in \{1,2\}$ is proven.
\end{proof}

\begin{proposition}
Assume $\alpha \in (\frac{1}{3},2]$ and $\psi \in H^{\infty}_{\rm per}(\mathbb{T})$ be an even, single-lobe periodic wave.
If $\{ 1, \psi, \psi^2\} \in {\rm Range}(\mathcal{L})$, then ${\rm Ker}(\mathcal{L}) = {\rm span}(\partial_x \psi)$.
\label{prop-kernel}
\end{proposition}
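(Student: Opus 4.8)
The plan is to show that the hypothesis $\{1,\psi,\psi^2\} \subset {\rm Range}(\mathcal{L})$ rules out the possibility $z(\mathcal{L}) = 2$ allowed by Corollary \ref{cor-nodal}, leaving only $z(\mathcal{L}) = 1$, which together with the always-available element $\partial_x \psi \in {\rm Ker}(\mathcal{L})$ gives the claim. Since $\mathcal{L}$ is self-adjoint with compact resolvent on $L^2_{\rm per}(\mathbb{T})$, we have the Fredholm alternative: $f \in {\rm Range}(\mathcal{L})$ if and only if $f \perp {\rm Ker}(\mathcal{L})$. So the real content is to extract, from the solvability relations \eqref{range-1}--\eqref{range-2}, enough orthogonality to force the kernel to be one-dimensional.

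First I would argue by contradiction: suppose $z(\mathcal{L}) = 2$. By the parity decomposition used in the proof of Corollary \ref{cor-nodal}, $\partial_x\psi$ spans the odd part of the kernel, so there must exist a nonzero \emph{even} function $\chi \in {\rm Ker}(\mathcal{L})$, which we normalize and take orthogonal to $\partial_x\psi$ automatically by parity. The Fredholm alternative applied to the hypothesis then yields the three scalar conditions $\langle \chi, 1\rangle = 0$, $\langle \chi, \psi\rangle = 0$, and $\langle \chi, \psi^2\rangle = 0$. The idea is that these three orthogonality conditions are incompatible with $\chi$ being an even eigenfunction of $\mathcal{L}$ for the zero eigenvalue when $\psi$ is an even single-lobe profile, because of the nodal constraint from Proposition \ref{prop-nodal}: such $\chi$ has at most two sign changes on $\mathbb{T}$, hence (being even and $2\pi$-periodic) exactly two symmetric nodes $\pm x_0$, so $\chi$ has a fixed sign on $(-x_0,x_0)$ and the opposite fixed sign on the complement. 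Meanwhile $\psi - \psi(x_0)$ also has exactly the two nodes $\pm x_0$ by the single-lobe monotonicity, and has a definite sign on each of the same two intervals. Therefore $\chi \cdot (\psi - \psi(x_0))$ has a single sign on all of $\mathbb{T}$, giving $\langle \chi, \psi - \psi(x_0)\rangle \neq 0$; combined with $\langle\chi,1\rangle = 0$ this forces $\langle\chi,\psi\rangle \neq 0$, contradicting the second Fredholm condition. (Alternatively one plays $\langle\chi,\psi\rangle = 0$ against $\langle\chi,1\rangle = 0$ and $\langle\chi,\psi^2\rangle = 0$ via a quadratic-in-$\psi$ test function $(\psi-r_1)(\psi-r_2)$ vanishing exactly at the two nodes.)

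The step I expect to be the main obstacle is making the nodal/sign argument fully rigorous in the case $n(\mathcal{L}) = 2$, where $0$ is only the \emph{third} eigenvalue and Proposition \ref{prop-nodal} permits the even zero-eigenfunction to have up to four nodes rather than two. In that situation $\chi$ may genuinely change sign four times, and the crude "single sign of $\chi \cdot (\psi - \psi(x_0))$" argument fails; one must instead use that the second eigenfunction (even, two nodes, negative eigenvalue) together with $\chi$ span a two-dimensional subspace on which $\mathcal{L} \leq 0$, and exploit the three linear constraints to produce a trial function in $\{1,\psi^2\}^\perp$ on which the quadratic form is negative, contradicting \eqref{lpositive}. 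Concretely: the constraints $\langle\chi,1\rangle = \langle\chi,\psi^2\rangle = 0$ say $\chi \in \{1,\psi^2\}^\perp$, and since $\mathcal{L}\chi = 0$ while $\mathcal{L}|_{\{1,\psi^2\}^\perp} \geq 0$, $\chi$ must lie in the kernel of that restricted form; one then shows that adjoining the genuinely negative direction (the even second eigenfunction, suitably projected onto $\{1,\psi^2\}^\perp$ using the two-dimensional span $\{1,\psi^2\}$) still produces a strictly negative direction unless a dimension count collapses — and the remaining relation $\langle\chi,\psi\rangle = 0$, i.e. that $\psi \notin {\rm Range}(\mathcal{L})^\perp$ is consistent only with $z(\mathcal{L}) = 1$, closes the count. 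I would carry this out by the following sequence of steps: (i) record the Fredholm alternative and reduce to exhibiting that $\{1,\psi,\psi^2\} \subset {\rm Range}(\mathcal{L})$ forces every even kernel element to vanish; (ii) invoke the parity splitting to isolate a hypothetical even $\chi \in {\rm Ker}(\mathcal{L})$; (iii) in the subcase $n(\mathcal{L}) = 1$, run the two-node sign argument above to get a contradiction directly from $\langle\chi,\psi\rangle = 0$; (iv) in the subcase $n(\mathcal{L}) = 2$, combine the three orthogonality conditions with $\mathcal{L}|_{\{1,\psi^2\}^\perp}\geq 0$ and a min-max / dimension count against the two even non-positive directions to reach a contradiction; (v) conclude $z(\mathcal{L}) = 1$ and hence ${\rm Ker}(\mathcal{L}) = {\rm span}(\partial_x\psi)$.
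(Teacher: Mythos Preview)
The paper does not actually supply a proof here; it simply cites Proposition~3.1 of \cite{hur}. Your overall strategy---Fredholm alternative plus the nodal bound of Proposition~\ref{prop-nodal} to rule out a second kernel element---is exactly the argument used in \cite{hur}, so in spirit you are on the right track and your case $n(\mathcal{L})=1$ (two-node) argument via the linear test function $\psi-\psi(x_0)$ is correct.

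The genuine gap is your treatment of the case $n(\mathcal{L})=2$. The min-max/dimension-count sketch you propose, using \eqref{lpositive} and projecting the second eigenfunction onto $\{1,\psi^2\}^\perp$, does not close: the fact that $\chi\in\{1,\psi^2\}^\perp$ with $\mathcal{L}\chi=0$ is perfectly compatible with $\mathcal{L}|_{\{1,\psi^2\}^\perp}\ge 0$, since $\chi$ simply sits in the kernel of the restricted form, and nothing in your count forces a strictly negative direction to survive the projection. What actually works is the idea you mention only parenthetically, applied one degree higher. When $\chi$ is even with (at most) four sign changes at $\pm x_1,\pm x_2$ with $0<x_1<x_2<\pi$, set $r_j=\psi(x_j)$ (distinct by strict monotonicity of the single-lobe profile on $(0,\pi)$) and take the quadratic test function $p(\psi)=(\psi-r_1)(\psi-r_2)$. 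Then $p(\psi)$ vanishes exactly at $\pm x_1,\pm x_2$ and alternates sign on the successive subintervals, as does $\chi$; hence $\chi\cdot p(\psi)$ has a single sign on $\mathbb{T}$ and $\langle\chi,p(\psi)\rangle\neq 0$, contradicting $\langle\chi,1\rangle=\langle\chi,\psi\rangle=\langle\chi,\psi^2\rangle=0$. This single polynomial-in-$\psi$ argument handles the zero-, two-, and four-node subcases uniformly (degree $0$, $1$, $2$ respectively) and is precisely the mechanism behind the result in \cite{hur}; note also that it is exactly here that one sees why all three of $1,\psi,\psi^2$ are needed in the range hypothesis, and why the bound $n(\mathcal{L})\le 2$ from Corollary~\ref{cor-nodal} (hence the minimizer origin of $\psi$) is implicitly used.
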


\begin{proof}
The result is formulated as Proposition 3.1  in \cite{hur} and is proven from the property
$\{ 1, \psi, \psi^2 \} \in {\rm Range}(\mathcal{L})$  claimed in (L3) of Lemma 3.3 in \cite{hur}.
\end{proof}

\begin{remark}
The proof of (L3) in Lemma 3.3 in \cite{hur} relies on the smoothness
of minimizers of energy $E(u)$ subject to fixed momentum $F(u)$ and mass $M(u)$ with respect to
Lagrange multipliers $c$ and $b$. Unfortunately, this smoothness cannot be taken as granted
and may be false. Indeed, ${\rm Ker}(\mathcal{L}) \neq {\rm span}(\partial_x \psi)$ for some periodic waves
satisfying the stationary equation (\ref{ode-wave}) for $\alpha < \alpha_0$ (see Corollary \ref{corollary-i},
Remark \ref{remark21}, and Remark \ref{rem-small}).
\label{rem-false}
\end{remark}

The following lemma characterizes the kernel of $\mathcal{L} |_{X_0} = \Pi_0 \mathcal{L} \Pi_0$,
where $\Pi_0$ is defined in (\ref{ode-bvp}) and $X_0$ is defined in (\ref{zero}).
The standard inner product in $L^2_{\rm per}(\mathbb{T})$ is denoted by $\langle \cdot, \cdot \rangle$.

\begin{lemma}
\label{lem-mean-value}
Assume $\alpha \in (\frac{1}{3},2]$ and $\psi \in H^{\infty}_{\rm per}(\mathbb{T})$ be an even, single-lobe periodic wave.
If there exists $f \in {\rm Ker}(\mathcal{L} |_{X_0})$ such that $\langle f, \partial_x \psi \rangle = 0$ and $f \neq 0$, then
\begin{equation}
\label{kernel-mean-value}
{\rm Ker}(\mathcal{L}) = {\rm span}(\partial_x \psi), \;\; \langle f, \psi \rangle \neq 0, \;\; \mbox{\rm and} \;\; \langle f, \psi^2 \rangle = 0.
\end{equation}
\end{lemma}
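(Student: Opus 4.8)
The plan is to work with the restricted operator $\mathcal{L}|_{X_0}$ and exploit the explicit relations \eqref{range-1} and \eqref{range-2} together with the single-lobe oscillation structure from Proposition \ref{prop-nodal} and Corollary \ref{cor-nodal}. First I would record the key consequence of $f \in {\rm Ker}(\mathcal{L}|_{X_0})$: since $\mathcal{L}|_{X_0} = \Pi_0 \mathcal{L} \Pi_0$ and $f \in X_0$ (so $\Pi_0 f = f$), the condition $\mathcal{L}|_{X_0} f = 0$ means $\Pi_0 \mathcal{L} f = 0$, i.e. $\mathcal{L} f = \kappa$ for some constant $\kappa \in \mathbb{R}$. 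The goal is essentially to show $\kappa = 0$, because then $f \in {\rm Ker}(\mathcal{L})$, and combined with $\langle f, \partial_x \psi\rangle = 0$ and the oscillation count this will pin down both the structure of ${\rm Ker}(\mathcal{L})$ and the two inner-product identities.

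Next I would use \eqref{range-2}, namely $\mathcal{L} 1 = c - 2\psi$, equivalently $\mathcal{L}(c - 2\psi) $ is not directly what I need; rather, from $\mathcal{L} 1 = c - 2\psi$ and self-adjointness of $\mathcal{L}$, for $f$ with $\mathcal{L} f = \kappa$ we get $\langle \mathcal{L} f, 1\rangle = \kappa \cdot 2\pi$ on one side and $\langle f, \mathcal{L} 1\rangle = \langle f, c - 2\psi\rangle = -2\langle f,\psi\rangle$ on the other (using $\int f\, dx = 0$). Hence $\langle f, \psi\rangle = -\pi\kappa$. Similarly, pairing $\mathcal{L} f = \kappa$ with $\psi$ and using \eqref{range-1}, $\mathcal{L}\psi = -\psi^2 - b(c)$: $\langle \mathcal{L} f, \psi\rangle = \kappa \int \psi\, dx = 0$ (zero mean of $\psi$), while $\langle f, \mathcal{L}\psi\rangle = -\langle f, \psi^2\rangle - b(c)\int f\, dx = -\langle f, \psi^2\rangle$. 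Therefore $\langle f, \psi^2\rangle = 0$ unconditionally, which already gives the third claim in \eqref{kernel-mean-value}. It remains to show $\kappa = 0$ (which forces $\langle f,\psi\rangle = 0$... but wait, we want $\langle f,\psi\rangle \neq 0$) — so the real content is the opposite: I must show $\kappa \neq 0$, equivalently $f \notin {\rm Ker}(\mathcal{L})$ would contradict something, OR that if $\kappa = 0$ then $f$ is forced to be a multiple of $\partial_x\psi$, contradicting $\langle f,\partial_x\psi\rangle = 0$ and $f \neq 0$.

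So the decisive step is: suppose for contradiction $\kappa = 0$, i.e. $f \in {\rm Ker}(\mathcal{L})$. Decompose $f$ into even and odd parts $f = f_e + f_o$; both lie in ${\rm Ker}(\mathcal{L})$ since $\mathcal{L}$ preserves parity ($\psi$ even). The odd part: by (L1) of Lemma 3.3 in \cite{hur} (as used in Corollary \ref{cor-nodal}), $0$ is the lowest eigenvalue of $\mathcal{L}$ on odd functions with simple eigenfunction $\partial_x\psi$, so $f_o \in {\rm span}(\partial_x\psi)$; but $\langle f,\partial_x\psi\rangle = 0$ forces $f_o = 0$. The even part $f_e$: by the nodal count in Corollary \ref{cor-nodal}, in the even subspace the zero eigenvalue is simple (at most one eigenfunction, with $\le 2$ or $\le 4$ nodes depending on $n(\mathcal{L})$), but any nonzero even element of ${\rm Ker}(\mathcal{L}|_{X_0})$ that I am given — here $f_e$ with $\int f_e\, dx = 0$ — would have to be that eigenfunction; I then need to show $f_e = 0$ as well, which I expect to get from the fact that the even kernel eigenfunction of $\mathcal{L}$, if it exists, does \emph{not} have zero mean (this is exactly where the fold-point analysis enters, tying back to \eqref{range-1}, \eqref{range-2} and the relation $\langle \mathcal{L}^{-1}1, 1\rangle$, $\langle\mathcal{L}^{-1}\psi^2,1\rangle$ type quantities). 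Thus $f = 0$, contradiction. Hence $\kappa \neq 0$, giving $\langle f,\psi\rangle = -\pi\kappa \neq 0$; and since $f \in {\rm Ker}(\mathcal{L}|_{X_0})$ is then \emph{not} in ${\rm Ker}(\mathcal{L})$, the only element of ${\rm Ker}(\mathcal{L})$ surviving parity and orthogonality constraints is $\partial_x\psi$, so ${\rm Ker}(\mathcal{L}) = {\rm span}(\partial_x\psi)$.

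The main obstacle I anticipate is the even-part argument — showing that the putative even zero-eigenfunction of $\mathcal{L}$ cannot have zero mean value. This is genuinely the crux because it is precisely the phenomenon that fails at a fold point; I expect the paper handles it by distinguishing $z(\mathcal{L}) = 1$ from $z(\mathcal{L}) = 2$ (Corollary \ref{cor-nodal}), and in the $z(\mathcal{L}) = 2$ case the hypothesis "$f \neq 0$ with $\langle f, \partial_x\psi\rangle = 0$" together with the dimension count forces $f$ to be (proportional to) the even kernel element, and then a direct computation with \eqref{range-1}–\eqref{range-2} — e.g. evaluating $\langle \mathcal{L} 1, f\rangle$ and $\langle \mathcal{L}\psi, f\rangle$ in two ways — either produces $\langle f, 1\rangle \neq 0$ (contradiction) or yields the claimed identities. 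The bookkeeping of which of $\{1,\psi,\psi^2\}$ lies in ${\rm Range}(\mathcal{L}) = {\rm Ker}(\mathcal{L})^\perp$ will need care, since that is equivalent to orthogonality against the kernel and hence to the vanishing/non-vanishing of $\langle f,\psi\rangle$ and $\langle f,\psi^2\rangle$ that we are trying to establish.
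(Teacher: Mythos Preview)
Your preliminary computations are correct and match the paper exactly: from $\mathcal{L}f=\kappa$ you get $\langle f,\psi\rangle=-\pi\kappa$ and $\langle f,\psi^2\rangle=0$ unconditionally. The gap is in how you close both branches of the dichotomy, and the missing tool in each case is Proposition~\ref{prop-kernel}.

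In the branch $\kappa=0$, your parity decomposition and the appeal to nodal counts lead you to the ``main obstacle'' you identify --- showing that a putative even zero-eigenfunction cannot have zero mean --- which you do not resolve. The paper avoids this detour entirely. If $\kappa=0$ then $f\in{\rm Ker}(\mathcal{L})$ with $\langle f,1\rangle=\langle f,\psi\rangle=\langle f,\psi^2\rangle=0$; since also $\partial_x\psi\perp\{1,\psi,\psi^2\}$ and $z(\mathcal{L})\le 2$ by Corollary~\ref{cor-nodal}, we have ${\rm Ker}(\mathcal{L})={\rm span}(\partial_x\psi,f)$ and hence $\{1,\psi,\psi^2\}\subset[{\rm Ker}(\mathcal{L})]^{\perp}={\rm Range}(\mathcal{L})$. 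Proposition~\ref{prop-kernel} then forces ${\rm Ker}(\mathcal{L})={\rm span}(\partial_x\psi)$, the desired contradiction. No zero-mean analysis of the even eigenfunction is needed.

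In the branch $\kappa\neq 0$, your conclusion ``since $f\notin{\rm Ker}(\mathcal{L})$, the only element of ${\rm Ker}(\mathcal{L})$ surviving parity and orthogonality constraints is $\partial_x\psi$'' is a non sequitur: the fact that \emph{this particular} $f$ is not in the kernel does not by itself bound $z(\mathcal{L})$. The paper instead observes that $\mathcal{L}f=\kappa$ gives $1\in{\rm Range}(\mathcal{L})$ directly, and then \eqref{range-1} and \eqref{range-2} yield $\psi^2,\psi\in{\rm Range}(\mathcal{L})$ as well; Proposition~\ref{prop-kernel} then gives ${\rm Ker}(\mathcal{L})={\rm span}(\partial_x\psi)$. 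You gesture at this at the very end (``bookkeeping of which of $\{1,\psi,\psi^2\}$ lies in ${\rm Range}(\mathcal{L})$''), but that bookkeeping \emph{is} the whole argument, not a peripheral detail.
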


\begin{proof}
Since $f \in {\rm Ker}(\mathcal{L} |_{X_0})$, then $\langle 1, f \rangle = 0$ and
$f$ satisfies
\begin{equation}
\label{kernel-X-0}
0 = \mathcal{L} |_{X_0} f = \mathcal{L} f + \frac{1}{\pi} \int_{-\pi}^{\pi} f \psi dx.
\end{equation}
Either $\langle f, \psi \rangle = 0$ or $\langle f, \psi \rangle \neq 0$. 

Assume first that $\langle f, \psi \rangle = 0$. It follows by (\ref{kernel-X-0}) that $f \in {\rm Ker}(\mathcal{L})$
and by equality (\ref{range-1}), we have $\langle f, \psi^2 \rangle = 0$. By Corollary \ref{cor-nodal},
the kernel of $\mathcal{L}$ can be at most two-dimensional, hence ${\rm Ker}(\mathcal{L}) = {\rm span}(\partial_x \psi, f)$
and $\{1, \psi, \psi^2 \} \in [{\rm Ker}(\mathcal{L})]^{\perp}$.
By Fredholm theorem for self-adjoint operator (\ref{operator-c}), we have $\{ 1, \psi, \psi^2\} \in {\rm Range}(\mathcal{L})$
and by Proposition \ref{prop-kernel}, ${\rm Ker}(\mathcal{L}) = {\rm span}(\partial_x \psi)$ in contradiction to the conclusion
that $f \in {\rm Ker}(\mathcal{L})$. Therefore, assumption $\langle f, \psi \rangle = 0$ leads to contradiction.

Assume now that $\langle f, \psi \rangle \neq 0$. It follows by (\ref{kernel-X-0}) that $1 \in {\rm Range}(\mathcal{L})$.
Then, by (\ref{range-1}) and (\ref{range-2}), we have $\psi^2 \in {\rm Range}(\mathcal{L})$
and $\psi \in {\rm Range}(\mathcal{L})$ respectively. In other words, $\{ 1, \psi, \psi^2\} \in {\rm Range}(\mathcal{L})$
and by Proposition \ref{prop-kernel}, ${\rm Ker}(\mathcal{L}) = {\rm span}(\partial_x \psi)$.
In addition, by (\ref{range-1}), we have
$$
\langle f, \psi^2 \rangle = - \langle f, \mathcal{L} \psi \rangle = - \langle \mathcal{L} f, \psi \rangle =
\frac{1}{\pi} \langle f, \psi \rangle \langle 1, \psi \rangle = 0.
$$
This yields (\ref{kernel-mean-value}).
\end{proof}

\begin{corollary}
\label{cor-mean-value}
If $f$ exists in Lemma \ref{lem-mean-value}, then
${\rm Ker}(\mathcal{L} |_{X_0}) = {\rm span}(\partial_x \psi, f)$.
\end{corollary}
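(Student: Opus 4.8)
The plan is to read off this corollary from Lemma \ref{lem-mean-value}, which already carries all of the analytic weight. Recall that under the standing hypothesis (existence of $f$) that lemma provides three facts: ${\rm Ker}(\mathcal{L}) = {\rm span}(\partial_x \psi)$, $\langle f, \psi \rangle \neq 0$, and $\langle f, \psi^2 \rangle = 0$; for the present statement only the first two are needed. I will also reuse the reduction (\ref{kernel-X-0}): every $g \in {\rm Ker}(\mathcal{L}|_{X_0})$ satisfies $\langle 1, g \rangle = 0$ together with $\mathcal{L} g = -\frac{1}{\pi} \langle g, \psi \rangle\, 1$, which follows from $\Pi_0 g = g$ and $\langle \mathcal{L} g, 1 \rangle = \langle g, \mathcal{L} 1 \rangle = -2 \langle g, \psi \rangle$ by (\ref{range-2}).

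First I would dispose of the inclusion ${\rm span}(\partial_x \psi, f) \subseteq {\rm Ker}(\mathcal{L}|_{X_0})$. The element $\partial_x \psi$ belongs there because $\langle 1, \partial_x \psi \rangle = 0$ and $\mathcal{L} \partial_x \psi = 0$, so $\Pi_0 \mathcal{L} \Pi_0 \partial_x \psi = 0$; and $f \in {\rm Ker}(\mathcal{L}|_{X_0})$ is the hypothesis. Moreover $f$ and $\partial_x \psi$ are linearly independent: they are orthogonal by assumption, and both are nonzero (the single-lobe profile $\psi$ is non-constant, so $\partial_x \psi \neq 0$). Hence ${\rm span}(\partial_x \psi, f)$ is genuinely two-dimensional.

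The core step is the reverse inclusion. Given an arbitrary $g \in {\rm Ker}(\mathcal{L}|_{X_0})$, apply (\ref{kernel-X-0}) to both $g$ and $f$, obtaining $\mathcal{L} g = -\frac{1}{\pi} \langle g, \psi \rangle\, 1$ and $\mathcal{L} f = -\frac{1}{\pi} \langle f, \psi \rangle\, 1$. Then the linear combination $h := \langle f, \psi \rangle\, g - \langle g, \psi \rangle\, f$ satisfies $\mathcal{L} h = 0$, so $h \in {\rm Ker}(\mathcal{L}) = {\rm span}(\partial_x \psi)$ by Lemma \ref{lem-mean-value}; write $h = \beta\, \partial_x \psi$. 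Dividing by $\langle f, \psi \rangle \neq 0$ (again from Lemma \ref{lem-mean-value}) gives $g = \frac{\langle g, \psi \rangle}{\langle f, \psi \rangle} f + \frac{\beta}{\langle f, \psi \rangle} \partial_x \psi \in {\rm span}(f, \partial_x \psi)$. Combined with the first paragraph this yields ${\rm Ker}(\mathcal{L}|_{X_0}) = {\rm span}(\partial_x \psi, f)$.

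I do not anticipate any genuine obstacle: this is a true corollary, since the non-degeneracy of $\mathcal{L}$, the sign $\langle f, \psi \rangle \neq 0$, and the range identities (\ref{range-1})--(\ref{range-2}) feeding Proposition \ref{prop-kernel} are all already packaged inside Lemma \ref{lem-mean-value} and its proof. The only points demanding a little care are purely bookkeeping: invoking (\ref{kernel-X-0}) correctly (in particular that $\Pi_0 g = g$ on $X_0$ and that $\langle \mathcal{L} g, 1\rangle = -2\langle g, \psi\rangle$ via (\ref{range-2})), and noting explicitly that the scalar $\langle f, \psi \rangle$ by which one divides is nonzero.
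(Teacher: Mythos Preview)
Your proof is correct and uses essentially the same mechanism as the paper: both hinge on the observation that any $g\in{\rm Ker}(\mathcal{L}|_{X_0})$ satisfies $\mathcal{L}g=-\frac{1}{\pi}\langle g,\psi\rangle\,1$, so the combination $\langle f,\psi\rangle g-\langle g,\psi\rangle f$ lands in ${\rm Ker}(\mathcal{L})={\rm span}(\partial_x\psi)$. The paper packages this as a contradiction (assume two orthogonal elements in ${\rm Ker}(\mathcal{L}|_{X_0})\cap\{\partial_x\psi\}^\perp$ and produce a nonzero element of ${\rm Ker}(\mathcal{L})$ orthogonal to $\partial_x\psi$), whereas you give the direct decomposition of an arbitrary $g$; the underlying argument is identical.
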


\begin{proof}
Assume two orthogonal vectors $f_1, f_2 \in {\rm Ker}(\mathcal{L} |_{X_0})$
such that $\langle f_{1,2}, \partial_x \psi \rangle = 0$ and $f_{1,2} \neq 0$.
Since $\langle f_{1,2}, \psi \rangle \neq 0$, there exists a linear combination of
$f_1$ and $f_2$ in ${\rm Ker}(\mathcal{L})$ in contradiction with
${\rm Ker}(\mathcal{L}) = {\rm span}(\partial_x \psi)$ in (\ref{kernel-mean-value}).
\end{proof}

\begin{corollary}
\label{cor-kernels}
${\rm Ker}(\mathcal{L} |_{X_0}) = {\rm Ker}(\mathcal{L} |_{\{1, \psi^2\}^{\perp}})$.
\end{corollary}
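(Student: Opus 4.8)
The plan is to prove the two inclusions separately, using two elementary reformulations of kernel membership. First, $f \in {\rm Ker}(\mathcal{L}|_{X_0})$ if and only if $f \perp 1$ and $\mathcal{L} f \in {\rm span}(1)$; this is precisely equation (\ref{kernel-X-0}). Second, $g \in {\rm Ker}(\mathcal{L}|_{\{1,\psi^2\}^{\perp}})$ if and only if $g \perp 1$, $g \perp \psi^2$, and $\mathcal{L} g \in {\rm span}(1,\psi^2)$; here I use that $\{1,\psi^2\}$ is linearly independent, $\psi$ being non-constant, so that $({\rm span}\{1,\psi^2\})^{\perp} = \{1,\psi^2\}^{\perp}$. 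I would also note at the outset that $\partial_x\psi$ lies in both kernels, since $\mathcal{L}\partial_x\psi = 0$, $\langle \partial_x\psi, 1\rangle = 0$, and $\langle \partial_x\psi, \psi^2\rangle = \frac{1}{3}\int_{-\pi}^{\pi}(\psi^3)'\,dx = 0$. With these reformulations, the inclusion ${\rm Ker}(\mathcal{L}|_{X_0}) \subseteq {\rm Ker}(\mathcal{L}|_{\{1,\psi^2\}^{\perp}})$ reduces to the single claim that every $f \in {\rm Ker}(\mathcal{L}|_{X_0})$ satisfies $\langle f, \psi^2\rangle = 0$: indeed, then $\mathcal{L} f \in {\rm span}(1) \subseteq {\rm span}(1,\psi^2)$ while $f \in \{1,\psi^2\}^{\perp}$ automatically.

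To establish this claim I would argue by cases on $\dim {\rm Ker}(\mathcal{L}|_{X_0})$, which is at least one because it contains $\partial_x\psi$. If $\dim {\rm Ker}(\mathcal{L}|_{X_0}) = 1$, then ${\rm Ker}(\mathcal{L}|_{X_0}) = {\rm span}(\partial_x\psi) \subseteq \{\psi^2\}^{\perp}$ by the computation above. Otherwise there is a nonzero $f \in {\rm Ker}(\mathcal{L}|_{X_0})$ with $\langle f, \partial_x\psi\rangle = 0$, and then Lemma \ref{lem-mean-value} yields $\langle f, \psi^2\rangle = 0$ while Corollary \ref{cor-mean-value} yields ${\rm Ker}(\mathcal{L}|_{X_0}) = {\rm span}(\partial_x\psi, f)$; again ${\rm Ker}(\mathcal{L}|_{X_0}) \subseteq \{\psi^2\}^{\perp}$. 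This proves the forward inclusion.

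For the reverse inclusion I would take $g \in {\rm Ker}(\mathcal{L}|_{\{1,\psi^2\}^{\perp}})$, so $g \perp 1$, $g \perp \psi^2$, and $\mathcal{L} g = \mu_1 \cdot 1 + \mu_2 \psi^2$ for some constants $\mu_1, \mu_2$, and I would show $\mu_2 = 0$. Applying $\Pi_0$ and using $\Pi_0 g = g$ together with $\Pi_0\psi^2 = \psi^2 - b$ from (\ref{b-c}) gives $\mathcal{L}|_{X_0} g = \mu_2(\psi^2 - b)$, while applying $\Pi_0$ to (\ref{range-1}) and using $\Pi_0\psi = \psi$ gives $\mathcal{L}|_{X_0}\psi = -(\psi^2 - b)$; hence $g + \mu_2 \psi \in {\rm Ker}(\mathcal{L}|_{X_0})$. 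By the forward inclusion already proved, $g + \mu_2 \psi \perp \psi^2$, and since $\langle g, \psi^2\rangle = 0$ this forces $\mu_2 \langle \psi, \psi^2\rangle = \mu_2 \int_{-\pi}^{\pi}\psi^3\,dx = 0$; as $\int_{-\pi}^{\pi}\psi^3\,dx > 0$ by (\ref{rel-neg}), we conclude $\mu_2 = 0$. Then $\mathcal{L} g \in {\rm span}(1)$ and $g \perp 1$, so $g \in {\rm Ker}(\mathcal{L}|_{X_0})$.

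The hard part is the case analysis in the second step: this is the only place where the nondegeneracy input of the paper (Lemma \ref{lem-mean-value} and Corollary \ref{cor-mean-value}) enters, and one must be careful to pin down $\dim {\rm Ker}(\mathcal{L}|_{X_0})$, which is never zero (it contains $\partial_x\psi$) and, thanks to Corollary \ref{cor-mean-value}, never exceeds two. The remaining manipulations with the projection $\Pi_0$ and the identities (\ref{range-1})--(\ref{range-2}) are routine.
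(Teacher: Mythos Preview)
Your proof is correct. The forward inclusion is exactly the paper's argument: both invoke Lemma~\ref{lem-mean-value} (together with the trivial observation for $\partial_x\psi$) to secure $\langle f,\psi^2\rangle=0$ for every $f\in{\rm Ker}(\mathcal{L}|_{X_0})$. For the reverse inclusion the paper writes out the explicit projection formula~\eqref{kernel-X-perp} and uses self-adjointness directly, computing $0=\langle f,\mathcal{L}\psi\rangle=\langle\mathcal{L}f,\psi\rangle=\alpha\langle\psi^2,\psi\rangle$ to force the $\Pi_0\psi^2$-coefficient $\alpha$ to vanish; you instead construct the auxiliary element $g+\mu_2\psi\in{\rm Ker}(\mathcal{L}|_{X_0})$ and bootstrap via the forward inclusion. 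This is a clean repackaging that avoids writing down~\eqref{kernel-X-perp}, but the substance---relation~\eqref{range-1} and the positivity $\int_{-\pi}^{\pi}\psi^3\,dx>0$ from~\eqref{rel-neg}---is identical to the paper's.
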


\begin{proof}
By using orthogonal projections, we write
\begin{equation}
\label{kernel-X-perp}
\mathcal{L} |_{{\{1, \psi^2\}^{\perp}}} f = \mathcal{L} f + \frac{1}{\pi} \int_{-\pi}^{\pi} f \psi dx - \alpha \Pi_0 \psi^2,
\quad \alpha = \frac{\langle \mathcal{L} f, \Pi_0 \psi^2 \rangle}{\langle \psi^2, \Pi_0 \psi^2 \rangle},
\end{equation}
where $\langle \psi^2, \Pi_0 \psi^2 \rangle = \| \psi \|_{L^4}^4 - \frac{1}{2\pi} \| \psi \|_{L^2}^2 > 0$
for every non-constant (single-lobe) $\psi$.

By Lemma \ref{lem-mean-value}, if $f \in {\rm Ker}(\mathcal{L} |_{X_0})$,
then $\langle f, \psi^2 \rangle = 0$.
Since $\langle 1, \Pi_0 \psi^2 \rangle = 0$, it follows from (\ref{kernel-X-0}) and (\ref{kernel-X-perp})
that $f \in {\rm Ker}(\mathcal{L} |_{\{1, \psi^2\}^{\perp}})$.

In the opposite direction, assume that $f \in  {\rm Ker}(\mathcal{L} |_{\{1, \psi^2\}^{\perp}})$, $\langle f, \partial_x \psi \rangle = 0$,
and $f \neq 0$. Since $\langle f, 1 \rangle = \langle f, \psi^2 \rangle = 0$,
we have by (\ref{range-1}) that $0 = \langle f, \mathcal{L} \psi \rangle = \langle \mathcal{L} f, \psi \rangle
= \alpha \langle \Pi_0 \psi^2, \psi \rangle$. Since $\langle \Pi_0 \psi^2, \psi \rangle = \langle \psi^2, \psi \rangle > 0$,
thanks to (\ref{Y-constraint}), (\ref{positivity}), and (\ref{lagrange-constraints}),
we obtain $\alpha = 0$ which implies that  $f \in {\rm Ker}(\mathcal{L} |_{X_0})$.
\end{proof}

The following lemma provides a sharp condition for a smooth continuation of the periodic wave
with profile $\psi$ with respect to the wave speed $c$.

\begin{lemma}\label{teoexist}
Assume $\alpha \in (\frac{1}{3},2]$ and $\psi_0$ be an even, single-lobe solution of the boundary-value problem (\ref{ode-bvp})
for a fixed $c_0 \in (-1,\infty)$ obtained with Theorem \ref{minlem}, Corollary \ref{cor-existence},  and Proposition \ref{regularity}.
Assume ${\rm Ker}(\mathcal{L} |_{X_0}) = {\rm span}(\partial_x \psi_0)$.
Then, there exists a unique continuation of even solutions of the boundary-value problem (\ref{ode-bvp})
in an open interval $\mathcal{I}_c \subset(-1,\infty)$ containing $c_0$ such that the mapping
\begin{equation}
\label{branch-psi}
\mathcal{I}_c \ni c \mapsto \psi(\cdot,c) \in H_{\rm per}^{\alpha}(\mathbb{T}) \cap X_0
\end{equation}
is $C^1$ and $\psi(\cdot,c_0) = \psi_0$.
\end{lemma}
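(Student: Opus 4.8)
The plan is to use the implicit function theorem (IFT) to continue the zero-mean solution $\psi_0$ in the parameter $c$. The natural functional setting is to work on the closed subspace $X_0$ of zero-mean functions, since by construction $\psi(\cdot, c) \in X_0$ for all admissible $c$. Define the map $F : \mathcal{I}_c \times (H^{\alpha}_{\rm per}(\mathbb{T}) \cap X_0)_{\rm even} \to X_0$ by $F(c,\psi) := D^{\alpha}\psi + c\psi - \Pi_0 \psi^2$, restricted to the subspace of even functions (which is invariant under the operations involved because $D^{\alpha}$, multiplication, and $\Pi_0$ all preserve evenness). The boundary-value problem (\ref{ode-bvp}) is precisely $F(c,\psi) = 0$, and we are given $F(c_0,\psi_0) = 0$. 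The map $F$ is smooth (indeed analytic) in $(c,\psi)$ since the only nonlinearity is quadratic.

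First I would compute the Fréchet derivative of $F$ with respect to $\psi$ at $(c_0,\psi_0)$: a direct calculation gives $D_{\psi} F(c_0,\psi_0) = D^{\alpha} + c_0 - 2\Pi_0 \psi_0 \,\cdot = \Pi_0 \mathcal{L} \Pi_0 = \mathcal{L}|_{X_0}$, acting on even functions in $H^{\alpha}_{\rm per}(\mathbb{T}) \cap X_0$ with range in $X_0$. The crucial point is that $\partial_x \psi_0$ is an \emph{odd} function, so within the even subspace the hypothesis ${\rm Ker}(\mathcal{L}|_{X_0}) = {\rm span}(\partial_x \psi_0)$ forces $\mathcal{L}|_{X_0}$ to have trivial kernel on the even part. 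I would then argue that $\mathcal{L}|_{X_0}$ is a Fredholm operator of index zero from $(H^{\alpha}_{\rm per} \cap X_0)_{\rm even}$ to $(X_0)_{\rm even}$: it is a bounded-below self-adjoint operator (after the bounded perturbation $-2\Pi_0\psi_0$, the principal part $D^{\alpha} + c_0$ restricted to $X_0$ is invertible for $c_0 > -1$ by the bound (\ref{positivity}), and $-2\Pi_0\psi_0$ is relatively compact since $\psi_0 \in L^{\infty}$ and the embedding $H^{\alpha}_{\rm per} \hookrightarrow L^2_{\rm per}$ is compact for $\alpha > 0$). Triviality of kernel plus Fredholm index zero gives that $\mathcal{L}|_{X_0}$ is a bijection from $(H^{\alpha}_{\rm per} \cap X_0)_{\rm even}$ onto $(X_0)_{\rm even}$, hence boundedly invertible by the open mapping theorem.

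With invertibility of $D_{\psi}F(c_0,\psi_0)$ established, the analytic implicit function theorem yields an open interval $\mathcal{I}_c \subset (-1,\infty)$ containing $c_0$ and a unique $C^1$ (in fact analytic) mapping $c \mapsto \psi(\cdot,c) \in (H^{\alpha}_{\rm per}(\mathbb{T}) \cap X_0)_{\rm even}$ with $\psi(\cdot,c_0) = \psi_0$ and $F(c,\psi(\cdot,c)) = 0$ for $c \in \mathcal{I}_c$. This is exactly the asserted continuation (\ref{branch-psi}). Uniqueness among even solutions follows from the local uniqueness clause in the IFT. One should also note that each $\psi(\cdot,c)$ automatically lies in $H^{\infty}_{\rm per}(\mathbb{T})$ by Proposition \ref{regularity}, and remains single-lobe for $c$ near $c_0$ by continuity of the profile in $H^{\alpha}_{\rm per} \hookrightarrow C(\mathbb{T})$ (for $\alpha > 1/2$; for smaller $\alpha$ one invokes the smoothness and the structure of critical points, though the lemma as stated only claims the $C^1$ branch in $H^{\alpha}_{\rm per} \cap X_0$).

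The main obstacle is establishing that $D_{\psi}F(c_0,\psi_0) = \mathcal{L}|_{X_0}$ is \emph{invertible} rather than merely injective on the even subspace — that is, surjectivity onto $(X_0)_{\rm even}$. This is where the Fredholm alternative is essential and where the self-adjointness of $\mathcal{L}$ together with the compactness of the embedding $H^{\alpha}_{\rm per}(\mathbb{T}) \hookrightarrow L^2_{\rm per}(\mathbb{T})$ (valid for all $\alpha > 0$ on the compact torus, which is why no lower bound on $\alpha$ beyond $1/3$ is needed here) does the work: a self-adjoint Fredholm operator of index zero with trivial kernel is automatically onto. A secondary technical point is checking that the even subspace is genuinely invariant so that one can run the IFT there and separate off the odd kernel direction $\partial_x\psi_0$; this is immediate since $\psi_0$ is even and all operators in sight commute with the reflection $x \mapsto -x$.
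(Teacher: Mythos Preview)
Your proposal is correct and follows essentially the same approach as the paper: restrict the boundary-value problem to the even subspace of $H^{\alpha}_{\rm per}(\mathbb{T})\cap X_0$, observe that the Fr\'echet derivative is $\mathcal{L}|_{X_0}$, use the hypothesis on ${\rm Ker}(\mathcal{L}|_{X_0})$ together with oddness of $\partial_x\psi_0$ to get invertibility on even functions, and apply the implicit function theorem. The only differences are cosmetic: the paper writes the fixed-point equation for $\tilde{\psi}=\psi-\psi_0$ explicitly and records the resulting formula $\partial_c\psi(\cdot,c_0)=-(\mathcal{L}|_{X_0})^{-1}\psi_0$ (which is used in Corollary~\ref{corollary-i}), while you supply the Fredholm/compact-perturbation justification for invertibility that the paper leaves implicit.
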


\begin{proof}
Let $\psi_0 \in H^{\alpha}_{\rm per}(\mathbb{T}) \cap X_0$ be an even, single-lobe
solution of the boundary-value problem (\ref{ode-bvp}) for $c_0 \in (-1,\infty)$.
Let $\psi \in H^{\alpha}_{\rm per}(\mathbb{T}) \cap X_0$ be a solution of the boundary-value problem (\ref{ode-bvp})
for $c \in (-1,\infty)$ to be constructed from $\psi_0$ for $c$ near $c_0$. Then, 
$\tilde{\psi} := \psi - \psi_0 \in H^{\alpha}_{\rm per}(\mathbb{T}) \cap X_0$
satisfies the following equation:
\begin{equation}
\label{IFT}
\mathcal{L}_0 |_{X_0} \tilde{\psi} = -(c-c_0) (\psi_0 + \tilde{\psi}) + \Pi_0 \tilde{\psi}^2,
\end{equation}
where $\mathcal{L}_0$ is obtained from $\mathcal{L}$ in (\ref{operator-c}) at $c = c_0$ and $\psi = \psi_0$,
whereas $\mathcal{L}_0 |_{X_0}$ acts on $\tilde{\psi}$ by the same expressions as in (\ref{kernel-X-0}).

Assume ${\rm Ker}(\mathcal{L}_0 |_{X_0}) = {\rm span}(\partial_x \psi_0)$ and consider
the subspace of even functions for which $\psi_0$ belongs. Then, $\mathcal{L}_0 |_{X_0}$
is invertible on the subspace of even functions in $H^{\alpha}_{\rm per}(\mathbb{T}) \cap X_0$
so that we can rewrite (\ref{IFT}) as the fixed-point equation:
\begin{equation}
\label{fixed-point}
\tilde{\psi} = -(c-c_0) \left( \mathcal{L}_0 |_{X_0}\right)^{-1} (\psi_0 + \tilde{\psi}) +
\left( \mathcal{L}_0 |_{X_0} \right)^{-1} \Pi_0 \tilde{\psi}^2.
\end{equation}
By the Implicit Function Theorem, there exist an open interval containing $c_0$, an open
ball $B_r \in H^{\alpha}_{\rm per}(\mathbb{T}) \cap X_0$ of radius $r > 0$ centered at $0$, and
a unique $C^1$ mapping $\mathcal{I}_c \ni c \mapsto \tilde{\psi}(\cdot,c) \in B_r$ such that
$\tilde{\psi}(\cdot,c)$ is an even solution to the fixed-point equation (\ref{fixed-point}) for every $c \in \mathcal{I}_c$
and $\tilde{\psi}(\cdot,c_0) = 0$. In particular, we find that
\begin{equation}
\partial_c \psi(\cdot,c_0) := \lim_{c \to c_0} \frac{\psi - \psi_0}{c - c_0} = - \left( \mathcal{L}_0 |_{X_0}\right)^{-1} \psi_0.
\label{eq-der}
\end{equation}
Hence, $\psi(\cdot,c)$ is an even solution of the boundary-value problem (\ref{ode-bvp}) for every $c \in \mathcal{I}_c$.
\end{proof}

\begin{remark}
Although the solution $\psi_0$ is obtained from a global minimizer of the variational problem (\ref{infB})--(\ref{Y-constraint}),
the solution $\psi(\cdot,c)$ in Lemma \ref{teoexist} is continued from the Euler--Lagrange equation (\ref{ode-bvp}). Therefore,
even if the solution $\psi(\cdot,c)$ is $C^1$ with respect to $c$ in $\mathcal{I}_c$ as in Lemma \ref{teoexist},
this solution may not coincide with the global minimizer of $\mathcal{B}_c$ in $Y_0$ for $c \neq c_0$,
the existence of which is guaranteed by Theorem \ref{minlem} for every $c \in (-1,\infty)$.
For example, the solution may only be a local minimizer of
$\mathcal{B}_c$  in $Y_0$ for $c \neq c_0$ in $\mathcal{I}_c$. Similarly, we cannot
guarantee that the solution $\psi(\cdot,c)$ has a single-lobe profile for $c \neq c_0$.
\end{remark}

\begin{remark}
In what follows, we again use the general notation $\psi$ for the solution to 
the boundary-value problem (\ref{ode-bvp}) and $c$ for the (fixed) wave speed.
\end{remark}

\begin{corollary}
\label{corollary-i}
For every $c \in (-1,\infty)$ for which ${\rm Ker}(\mathcal{L} |_{X_0}) = {\rm span}(\partial_x \psi)$,
we have
\begin{eqnarray}
\label{range-3}
\mathcal{L} \partial_c \psi = - \psi - b'(c),
\end{eqnarray}
where $b'(c) = \frac{1}{\pi} \int_{-\pi}^{\pi} \psi \partial_c \psi dx$.
If $c + 2 b'(c) \neq 0$, then ${\rm Ker}(\mathcal{L}) = {\rm span}(\partial_x \psi)$,
whereas if $c + 2 b'(c) = 0$, then ${\rm Ker}(\mathcal{L}) = {\rm span}(\partial_x \psi,1 - 2 \partial_c \psi)$.
\end{corollary}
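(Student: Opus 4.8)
The plan is to differentiate the profile equation in $c$ (using the $C^1$ continuation of Lemma~\ref{teoexist}) and then to isolate the combination of $1$ and $\partial_c\psi$ on which $\mathcal{L}$ acts as multiplication by a constant. First I would record the differential identities. Since ${\rm Ker}(\mathcal{L}|_{X_0}) = {\rm span}(\partial_x\psi)$, Lemma~\ref{teoexist} furnishes a $C^1$ family $c \mapsto \psi(\cdot,c) \in H^\alpha_{\rm per}(\mathbb{T}) \cap X_0$ through the given speed, so that $b(c) = \frac{1}{2\pi}\int_{-\pi}^{\pi}\psi^2\,dx$ is $C^1$ as well. Differentiating the stationary equation~(\ref{ode-wave}) with respect to $c$ gives $D^\alpha\partial_c\psi + \psi + c\,\partial_c\psi - 2\psi\,\partial_c\psi + b'(c) = 0$, which is precisely~(\ref{range-3}), and differentiating~(\ref{b-c}) gives $b'(c) = \frac{1}{\pi}\int_{-\pi}^{\pi}\psi\,\partial_c\psi\,dx$.

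Next I would combine~(\ref{range-2}) and~(\ref{range-3}) to obtain
\[
\mathcal{L}\bigl(1 - 2\partial_c\psi\bigr) = (-2\psi + c) - 2\bigl(-\psi - b'(c)\bigr) = c + 2 b'(c),
\]
a constant function, which is the source of the stated dichotomy. If $c + 2b'(c) = 0$, then $1 - 2\partial_c\psi \in {\rm Ker}(\mathcal{L})$; this function is even, because the continuation in Lemma~\ref{teoexist} remains in the even subspace, and it is nonzero, since its mean equals $2\pi$ while $\partial_c\psi \in X_0$, hence it is linearly independent of the odd (and nonzero, as $\psi$ is non-constant) function $\partial_x\psi$. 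Since $z(\mathcal{L}) \leq 2$ by Corollary~\ref{cor-nodal}, I would conclude ${\rm Ker}(\mathcal{L}) = {\rm span}(\partial_x\psi, 1 - 2\partial_c\psi)$. If instead $c + 2b'(c) \neq 0$, the same identity gives $1 \in {\rm Range}(\mathcal{L})$; feeding this into~(\ref{range-2}) yields $\psi \in {\rm Range}(\mathcal{L})$ and into~(\ref{range-1}) yields $\psi^2 \in {\rm Range}(\mathcal{L})$, so that $\{1,\psi,\psi^2\} \subset {\rm Range}(\mathcal{L})$, and Proposition~\ref{prop-kernel} gives ${\rm Ker}(\mathcal{L}) = {\rm span}(\partial_x\psi)$.

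The computation is short, so I do not anticipate a real obstacle; the only points that need attention are that $b$ is genuinely differentiable at the given $c$ --- which is exactly what Lemma~\ref{teoexist} delivers --- and that $1 - 2\partial_c\psi$ truly enlarges the kernel in the degenerate case, for which the parity of $\psi(\cdot,c)$ and the zero-mean normalization $\partial_c\psi \in X_0$ are the precise inputs. The one thing to spot is the combination $1 - 2\partial_c\psi$ that makes $\mathcal{L}$ collapse onto the constants.
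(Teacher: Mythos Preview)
Your proof is correct and follows essentially the same route as the paper: differentiate the stationary equation in $c$ to obtain~(\ref{range-3}), combine with~(\ref{range-2}) to get $\mathcal{L}(1-2\partial_c\psi)=c+2b'(c)$, and then split on whether this constant vanishes, invoking Corollary~\ref{cor-nodal} in the degenerate case and Proposition~\ref{prop-kernel} (via $\{1,\psi,\psi^2\}\subset{\rm Range}(\mathcal{L})$) in the non-degenerate case. Your additional remarks on parity and the nonzero mean of $1-2\partial_c\psi$ make the linear independence from $\partial_x\psi$ explicit, which the paper leaves implicit.
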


\begin{proof}
By Lemma \ref{teoexist}, equation (\ref{range-3}) follows from (\ref{eq-der})
and the definition of $\mathcal{L} |_{X_0}$ in (\ref{kernel-X-0}).
The same equation can also be obtained by formal differentiation of the boundary-value problem (\ref{ode-bvp})
in $c$ since $\psi$ and $b$ are $C^1$ with respect to $c$.
It follows from (\ref{range-2}) and (\ref{range-3}) that
\begin{equation}
\label{relder}
\mathcal{L} \left( 1 - 2 \partial_c \psi \right) = c + 2 b'(c),
\end{equation}
If $c + 2 b'(c) = 0$, then ${\rm Ker}(\mathcal{L}) = {\rm span}(\partial_x \psi, 1 - 2 \partial_c \psi)$ 
by Corollary \ref{cor-nodal}. 
If $c + 2 b'(c) \neq 0$, then $\{ 1, \psi, \psi^2 \} \in {\rm Range}(\mathcal{L})$ by
(\ref{range-1}), (\ref{range-2}), and (\ref{range-3}), so that ${\rm Ker}(\mathcal{L}) = {\rm span}(\partial_x \psi)$
by Proposition \ref{prop-kernel}.
\end{proof}

\begin{remark}
\label{rem-gamma}
It follows from (\ref{range-1}) and (\ref{range-3}) that
$$
-2\pi b(c) \langle \mathcal{L} \partial_c \psi, \psi \rangle = \langle \partial_c \psi, \mathcal{L} \psi \rangle = - \frac{2\pi}{3} \gamma'(c),
$$
so that $\gamma'(c) = 3 b(c) > 0$, where $\gamma(c) := \frac{1}{2\pi} \int_{-\pi}^{\pi} \psi^3 dx$.
\end{remark}

\begin{remark}
\label{remark21}
If $c_0 + 2 b'(c_0) = 0$ for some $c_0 \in (-1,\infty)$,
then $\varphi$ and $\omega$, which satisfy the stationary equation (\ref{galilean1})
after the Galilean transformation (\ref{Gal}), are $C^1$ functions of $c$ in $\mathcal{I}_c$
but not $C^1$ functions of $\omega$ at $\omega_0 := \sqrt{c_0^2 + 4b(c_0)}$.
Indeed, differentiating the relation $\omega^2 = c^2 + 4 b(c)$ in $c$ yields
$$
\omega \frac{d \omega}{dc} = c + 2 b'(c),
$$
so that $\frac{d\omega}{dc} |_{c = c_0} = 0$ and the $C^1$ mapping $\mathcal{I}_c \ni c \to \omega(c) \in \mathcal{I}_{\omega}$
is not invertible. Since the kernel of $\mathcal{L}$ at $\psi_0$ is two-dimensional, the solution
$\psi_0$ is at the fold point according to Definition \ref{def-fold}. The fold point yields the fold
bifurcation of the solution $\varphi$ with respect to parameter $\omega$ at $\omega_0$.
\end{remark}

The following lemma provides the explicit count of the number of negative eigenvalues $n(\mathcal{L})$
and the multiplicity of the zero eigenvalue $z(\mathcal{L})$ for the linearized operator $\mathcal{L}$ in (\ref{operator-c}).

\begin{lemma}
\label{propL}
Assume $\alpha \in (\frac{1}{3},2]$ and $\psi \in H^{\infty}_{\rm per}(\mathbb{T})$ be an even, single-lobe periodic wave
for $c \in (-1,\infty)$ in Lemma \ref{teoexist} with ${\rm Ker}(\mathcal{L} |_{X_0}) = {\rm span}(\partial_x \psi)$. Then, we have
\begin{equation}
z(\mathcal{L}) = \left\{ \begin{array}{ll} 1, \quad & c + 2 b'(c) \neq 0, \\
2, \quad & c + 2 b'(c) = 0,
\end{array}\right.
\label{zero-eig}
\end{equation}
and
\begin{equation}
n(\mathcal{L}) = \left\{ \begin{array}{ll} 1, \quad & c + 2 b'(c) \geq 0, \\
2, \quad & c + 2 b'(c) < 0.
\end{array}\right.
\label{neg-eig}
\end{equation}
\end{lemma}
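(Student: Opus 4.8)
The plan is to reduce the whole statement to Corollary~\ref{corollary-i} together with a one-constraint version of the Krein--Hamiltonian index count applied to the pair $\mathcal{L}$ and $\mathcal{L}|_{X_0}$. The multiplicity claim \eqref{zero-eig} is immediate from Corollary~\ref{corollary-i}: if $c+2b'(c)\neq 0$ then ${\rm Ker}(\mathcal{L})={\rm span}(\partial_x\psi)$, so $z(\mathcal{L})=1$; if $c+2b'(c)=0$ then ${\rm Ker}(\mathcal{L})={\rm span}(\partial_x\psi,\,1-2\partial_c\psi)$, and since $\partial_x\psi$ is odd while $1-2\partial_c\psi$ is even and nonzero the two span a two-dimensional space, so $z(\mathcal{L})=2$.

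For \eqref{neg-eig} I would first pin down the constrained operator, showing $n(\mathcal{L}|_{X_0})=1$; by hypothesis $z(\mathcal{L}|_{X_0})=1$. The lower bound $n(\mathcal{L}|_{X_0})\geq 1$ holds because $\psi\in X_0$ and $\langle\mathcal{L}\psi,\psi\rangle=-\int_{-\pi}^{\pi}\psi^3\,dx<0$ by \eqref{rel-neg}. The upper bound $n(\mathcal{L}|_{X_0})\leq 1$ holds because, by \eqref{lpositive}, $\mathcal{L}$ is nonnegative on $\{1,\psi^2\}^\perp$, which is a codimension-one subspace of $X_0$ (codimension one since the single-lobe profile $\psi$ is non-constant, so $\Pi_0\psi^2\neq 0$), and a self-adjoint operator with discrete spectrum that is nonnegative on a codimension-one subspace has at most one negative eigenvalue.

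It then remains to transfer this to $\mathcal{L}$ itself, by viewing $\mathcal{L}|_{X_0}=\mathcal{L}|_{\{1\}^\perp}$ as $\mathcal{L}$ constrained by the single vector $1$ and invoking the standard constraint formula (see \cite{harPel,Pel}): passing from $\mathcal{L}$ to $\mathcal{L}|_{\{1\}^\perp}$ leaves $n$ unchanged and drops $z$ by one if $1\notin{\rm Ran}(\mathcal{L})$, while if $1\in{\rm Ran}(\mathcal{L})$ it leaves $z$ unchanged and drops $n$ by one or leaves it unchanged according to whether $\langle\mathcal{L}^{-1}1,1\rangle<0$ or $\langle\mathcal{L}^{-1}1,1\rangle>0$. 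The relevant scalar follows from \eqref{relder}: since $\mathcal{L}(1-2\partial_c\psi)=(c+2b'(c))\cdot 1$ with $1-2\partial_c\psi$ orthogonal to $\partial_x\psi$ and $\int_{-\pi}^{\pi}\partial_c\psi\,dx=\partial_c\int_{-\pi}^{\pi}\psi\,dx=0$, one gets $\langle\mathcal{L}^{-1}1,1\rangle=\tfrac{2\pi}{c+2b'(c)}$ whenever $c+2b'(c)\neq 0$; if $c+2b'(c)=0$ then $1-2\partial_c\psi\in{\rm Ker}(\mathcal{L})$ with $\langle 1-2\partial_c\psi,1\rangle=2\pi\neq 0$, so $1\notin{\rm Ran}(\mathcal{L})$. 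Combining with $n(\mathcal{L}|_{X_0})=z(\mathcal{L}|_{X_0})=1$ gives $n(\mathcal{L})=1$ for $c+2b'(c)>0$, $n(\mathcal{L})=2$ for $c+2b'(c)<0$, and $n(\mathcal{L})=1$ for $c+2b'(c)=0$, which is \eqref{neg-eig}, and the $z$-counts coming out of the same formula reproduce \eqref{zero-eig}. For the case $c+2b'(c)<0$ one may argue directly instead: $\psi$ and $1-2\partial_c\psi$ are $\mathcal{L}$-orthogonal with $\langle\mathcal{L}\psi,\psi\rangle<0$ and $\langle\mathcal{L}(1-2\partial_c\psi),1-2\partial_c\psi\rangle=2\pi(c+2b'(c))<0$, and they are linearly independent because they have different mean values, so $\mathcal{L}$ is negative definite on their two-dimensional span, forcing $n(\mathcal{L})=2$ by Corollary~\ref{cor-nodal}.

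The main obstacle is the borderline case $c+2b'(c)=0$: here $\mathcal{L}$ has a two-dimensional kernel and the constraint vector $1$ lies outside ${\rm Ran}(\mathcal{L})$, so one must use the version of the index count in which the constraint absorbs one dimension of the kernel. This can be verified elementarily by taking any negative-definite subspace $E$ for $\mathcal{L}$, projecting it into $\{1\}^\perp$ along the kernel direction $1-2\partial_c\psi$ (which changes neither the quadratic form nor the dimension, since that vector lies in ${\rm Ker}(\mathcal{L})$), and concluding $n(\mathcal{L})\leq n(\mathcal{L}|_{X_0})=1$. A minor but necessary point throughout is the identification of $\mathcal{L}|_{X_0}$ with the form-restriction of $\mathcal{L}$ to $\{1\}^\perp$ and the codimension-one count in the second paragraph, both of which rely on $\psi$ being non-constant.
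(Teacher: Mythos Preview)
Your argument is correct, but it follows a different route from the paper's proof. The paper works with the \emph{two}-constraint operator $\mathcal{L}|_{\{1,\psi^2\}^\perp}$, for which $n=0$ and $z=1$ are known directly from the variational characterization \eqref{lpositive} and Corollary~\ref{cor-kernels}; it then assembles the full $2\times2$ matrix $P(\lambda)$ of inner products $\langle(\mathcal{L}-\lambda I)^{-1}u,v\rangle$ with $u,v\in\{1,\psi^2\}$, computes $\det P(0)=-\tfrac{4\pi^2\gamma(c)}{c+2b'(c)}$, and reads off $n_0$, $z_0$, $z_\infty$ from Theorem~4.1 in \cite{Pel-book}. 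You instead interpose the \emph{one}-constraint operator $\mathcal{L}|_{X_0}=\mathcal{L}|_{\{1\}^\perp}$: you first nail down $n(\mathcal{L}|_{X_0})=1$ via the mini--max estimate on the codimension-one subspace $\{1,\psi^2\}^\perp\subset X_0$, and then pass from $\mathcal{L}|_{X_0}$ to $\mathcal{L}$ using only the single scalar $\langle\mathcal{L}^{-1}1,1\rangle=\tfrac{2\pi}{c+2b'(c)}$ extracted from \eqref{relder}. This replaces a $2\times2$ determinant computation by a single scalar and, for $c+2b'(c)<0$, your direct two-dimensional test subspace $\{\psi,\,1-2\partial_c\psi\}$ (with $\langle\mathcal{L}\psi,1-2\partial_c\psi\rangle=(c+2b'(c))\langle\psi,1\rangle=0$) gives a self-contained variational proof that $n(\mathcal{L})=2$ without invoking the index formula at all. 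The paper's approach has the virtue of handling all three sign cases uniformly within one matrix computation and of feeding directly into the analogous $2\times2$ matrix $D(\lambda)$ used for the stability proof in Lemma~\ref{mainT}; your approach trades that uniformity for a lighter computational load and a more elementary treatment of the borderline case $c+2b'(c)=0$ via the explicit kernel-direction projection.
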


\begin{proof}
Thanks to (\ref{lpositive}), we have $n(\mathcal{L} \big|_{\{1,\psi^2\}^{\bot}}) = 0$.
By Corollary \ref{cor-kernels} and the assumption ${\rm Ker}(\mathcal{L} |_{X_0}) = {\rm span}(\partial_x \psi)$,
we have $z(\mathcal{L} \big|_{\{1,\psi^2\}^{\bot}}) = 1$.
By Theorem 5.3.2 in \cite{KP} or Theorem 4.1 in \cite{Pel-book},
we construct the following symmetric $2$-by-$2$ matrix related to the two constraints in (\ref{lpositive}):
$$
P(\lambda) := \left[\begin{array}{cc}\langle (\mathcal{L} - \lambda I)^{-1} \psi^2,\psi^2 \rangle &
\langle (\mathcal{L} - \lambda I)^{-1}\psi^2, 1 \rangle \\
\langle (\mathcal{L} - \lambda I)^{-1} 1, \psi^2 \rangle &
\langle (\mathcal{L} - \lambda I)^{-1}1,1\rangle
\end{array}\right], \quad \lambda \notin \sigma(\mathcal{L}).
$$
By Corollary \ref{corollary-i}, we can use equation (\ref{range-3}) in addition to
equations (\ref{range-1}) and (\ref{range-2}). Assuming $c + 2 b'(c) \neq 0$, we compute at $\lambda = 0$:
\begin{eqnarray*}
\langle \mathcal{L}^{-1}1,1\rangle & = & \frac{\langle 1 - 2 \partial_c \psi, 1 \rangle}{c + 2 b'(c)}  = \frac{2\pi}{c + 2 b'(c)}, \\
\langle \mathcal{L}^{-1} 1, \psi^2 \rangle & = & \frac{\langle 1 - 2 \partial_c \psi, \psi^2 \rangle}{c + 2 b'(c)}  =
\frac{2\pi}{c + 2 b'(c)} \left[ b(c) - \frac{2}{3} \gamma'(c) \right], \\
\langle \mathcal{L}^{-1} \psi^2, 1 \rangle & = & - \langle \psi, 1 \rangle - b(c) \frac{\langle 1 - 2 \partial_c \psi, 1 \rangle}{c + 2 b'(c)} =
-\frac{2\pi b(c)}{c + 2 b'(c)}, \\
\langle \mathcal{L}^{-1} \psi^2, \psi^2 \rangle & = & -\langle \psi, \psi^2 \rangle - b(c)
\frac{\langle 1 - 2 \partial_c \psi, \psi^2 \rangle}{c + 2 b'(c)} =
- 2 \pi \gamma(c) - \frac{2\pi b(c)}{c + 2 b'(c)} \left[ b(c) - \frac{2}{3} \gamma'(c) \right],
\end{eqnarray*}
where $\gamma'(c) = 3 b(c)$ holds by Remark \ref{rem-gamma}.
Therefore, the determinant of $P(0)$ for $c + 2 b'(c) \neq 0$ is computed as follows:
\begin{equation}
\label{det-P}
\det P(0) = -\frac{4 \pi^2 \gamma(c)}{c + 2 b'(c)}.
\end{equation}
Denote the number of negative and zero eigenvalues of $P(0)$ by $n_0$ and $z_0$ respectively.
If $c + 2 b'(c) = 0$, then $P(0)$ is singular, in which case denote
the number of diverging eigenvalues of $P(\lambda)$ as $\lambda \to 0$ by $z_{\infty}$.
By Theorem 4.1 in \cite{Pel-book}, we have the following identities:
\begin{equation}\label{identneg}
\left\{ \begin{array}{l}
n(\mathcal{L} \big|_{\{1,\psi^2\}^{\bot}}) = n(\mathcal{L}) - n_0 - z_0, \\
z(\mathcal{L} \big|_{\{1,\psi^2\}^{\bot}}) = z(\mathcal{L}) + z_0 - z_{\infty}.
\end{array} \right.
\end{equation}
Since $\gamma(c) > 0$, it follows that $z_0 = 0$. Since $n(\mathcal{L} \big|_{\{1,\psi^2\}^{\bot}}) = 0$
we have $n(\mathcal{L}) = n_0$ by (\ref{identneg}).
It follows from the determinant (\ref{det-P}) that $n_0 = 1$ if $c + 2 b'(c) > 0$
and $n_0 = 2$ if $c + 2 b'(c) < 0$. This yields (\ref{neg-eig}) for $c + 2 b'(c) \neq 0$.

Since $z(\mathcal{L} \big|_{\{1,\psi^2\}^{\bot}}) = 1$, we have
$z(\mathcal{L}) = 1 + z_{\infty}$ by (\ref{identneg}).
If $c + 2 b'(c) \neq 0$, then $z_{\infty} = 0$ so that $z(\mathcal{L}) = 1$.
The determinant (\ref{det-P}) implies that one eigenvalue of $P(\lambda)$ remains negative as $\lambda \to 0$,
whereas the other eigenvalue of $P(\lambda)$ in the limit $\lambda \to 0$
jumps from positive side for $c + 2 b'(c) > 0$ to the negative side
for $c + 2 b'(c) < 0$ through infinity at $c + 2 b'(c) = 0$. Therefore,
if $c + 2 b'(c) = 0$, then $n_0 = 1$ and $z_{\infty} = 1$ so that
$n(\mathcal{L}) = 1$ and $z(\mathcal{L}) = 2$.
This yields (\ref{zero-eig}) and (\ref{neg-eig}) for $c + 2 b'(c) = 0$.
\end{proof}

\begin{remark}
\label{rem-small}
By Proposition \ref{proposition-Gal}, we have invariance of the linearized operator $\mathcal{L}$
under the Galilean transformation (\ref{Gal}):
\begin{equation}
\label{operator-omega}
\mathcal{L} = D^{\alpha} + c - 2 \psi = D^{\alpha} + \omega - 2 \varphi.
\end{equation}
By using (\ref{c-expansion}) and (\ref{b-expansion}), we compute
the small-amplitude expansion
$$
c + 2 b'(c) = 2^{\alpha + 1} - 3  + \mathcal{O}(a^2).
$$
Hence, for $\alpha > \alpha_0$ and small $a \in (0,a_0)$, we have
$c + 2 b'(c) > 0$ so that $n(\mathcal{L}) = 1$ in agreement with Lemma 2.2 in \cite{lepeli},
whereas for $\alpha < \alpha_0$ and small $a \in (0,a_0)$, we have
$c + 2 b'(c) < 0$ so that $n(\mathcal{L}) = 2$. In the continuation of the solution $\psi$
in $a$ for $\alpha < \alpha_0$ by Corollary \ref{cor-Stokes}, there exists a fold point in the sense of Definition \ref{def-fold}
for which $c + 2 b'(c) = 0$, see Corollary \ref{corollary-i} and Remark \ref{remark21}.
\end{remark}

\section{Spectral Stability}
\label{sec-stability}

Here we consider the spectral stability problem (\ref{spectral-stab}).
We assume that $\psi \in H^{\infty}_{\rm per}(\mathbb{T})$ is an even, single-lobe
solution to the boundary-value problem (\ref{ode-bvp}) for some $c \in (-1,\infty)$ obtained with Theorem \ref{minlem},
Corollary \ref{cor-existence}, and Proposition \ref{regularity}. Since $\psi$ is smooth, the domain of $\partial_x \mathcal{L}$ in $L^2_{\rm per}(\mathbb{T})$
is $H^{1+\alpha}_{\rm per}(\mathbb{T})$.

If ${\rm Ker}(\mathcal{L} |_{X_0}) = {\rm span}(\partial_x \psi)$, then $\psi(\cdot,c)$ and $b(c)$ are $C^1$ functions
in $c$ by Lemma \ref{teoexist}. Therefore, we can use the three
equations (\ref{range-1}), (\ref{range-2}), and (\ref{range-3}) for the range of $\mathcal{L}$.
We can also use the count of $n(\mathcal{L})$ and $z(\mathcal{L})$ in Lemma \ref{propL}.
The following lemma provides a sharp criterion on the spectral stability
of the periodic wave with profile $\psi$ in the sense of Definition \ref{defspe}.

\begin{lemma}
\label{mainT}
Assume $\alpha \in (\frac{1}{3},2]$ and $\psi \in H^{\infty}_{\rm per}(\mathbb{T})$ be an even, single-lobe periodic wave
for $c \in (-1,\infty)$ in Lemma \ref{teoexist} with ${\rm Ker}(\mathcal{L} |_{X_0}) = {\rm span}(\partial_x \psi)$.
The periodic wave $\psi$ is spectrally stable if $b'(c) \geq 0$ and is spectrally unstable
with exactly one unstable (real, positive) eigenvalue of $\partial_x \mathcal{L}$ in $L^2_{\rm per}(\mathbb{T})$
if $b'(c) < 0$.
\end{lemma}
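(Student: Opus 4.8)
The plan is to apply the Krein--Hamiltonian index formula for the constrained spectral problem on the subspace $X_0$, relating the number of unstable eigenvalues of $\partial_x \mathcal{L}$ to the counts $n(\mathcal{L}|_{X_0})$ and $z(\mathcal{L}|_{X_0})$ together with a constraint matrix involving $\langle \mathcal{L}|_{X_0}^{-1} \partial_x^{-1}\psi, \partial_x^{-1}\psi\rangle$-type quantities. First I would set up the restricted operator $\mathcal{L}|_{X_0} = \Pi_0 \mathcal{L}\Pi_0$ acting on $X_0$, noting from Corollary~\ref{cor-kernels} that $\operatorname{Ker}(\mathcal{L}|_{X_0}) = \operatorname{Ker}(\mathcal{L}|_{\{1,\psi^2\}^\perp}) = \operatorname{span}(\partial_x\psi)$ under the standing assumption, and recording $n(\mathcal{L}|_{X_0})$ from $n(\mathcal{L})$ in Lemma~\ref{propL}. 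Since $\partial_x \mathcal{L} = \partial_x\mathcal{L}|_{X_0}$ on $X_0$ and $\partial_x$ restricted to $X_0$ is invertible, the spectral problem $\partial_x \mathcal{L}\eta = \lambda\eta$ becomes $\mathcal{L}|_{X_0}\eta = \lambda\, \partial_x^{-1}\eta$, i.e. a standard Hamiltonian eigenvalue problem $JL\eta=\lambda\eta$ with $J = \partial_x$ skew-adjoint and $L = \mathcal{L}|_{X_0}$ self-adjoint on $X_0$, to which the instability index theorems of \cite{kap,Pel,harPel,Pel-book} apply.

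The key identity I expect to need is the analogue of the solitary-wave computation: one must produce the preimage under $\mathcal{L}|_{X_0}$ of the generalized-kernel generator and evaluate a scalar. From $\mathcal{L}\partial_c\psi = -\psi - b'(c)$ in (\ref{range-3}) and the fact that $\langle 1, \psi\rangle = 0$, the function $\Pi_0 \partial_c \psi$ satisfies $\mathcal{L}|_{X_0}(\Pi_0\partial_c\psi) = -\Pi_0\psi = -\psi$ (using $M(\psi)=0$), so $\partial_x\psi$ generates a Jordan block of $\partial_x\mathcal{L}$ of length exactly two precisely when $\langle \partial_c\psi, \psi\rangle \neq 0$, and $\langle \partial_c\psi,\psi\rangle = \tfrac12\tfrac{d}{dc}\|\psi\|_{L^2}^2 = \pi b'(c)$. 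Thus the sign of $b'(c)$ controls the Krein signature of the relevant pair of eigenvalues bifurcating from the origin: the index formula gives $k_r + 2k_c + 2k_i^- = n(\mathcal{L}|_{X_0}) - n(D)$, where $D$ is the $1\times 1$ (or $2\times 2$ at a fold) constraint matrix whose single entry is $\langle \mathcal{L}|_{X_0}^{-1}\psi,\psi\rangle = \langle \partial_c\psi, \psi\rangle = \pi b'(c)$ — hence $n(D) = 1$ when $b'(c) < 0$ and $n(D) = 0$ when $b'(c) > 0$ (the case $b'(c)=0$ handled by a limiting/perturbation argument or by $z_\infty$ bookkeeping as in Lemma~\ref{propL}).

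Assembling: when $c + 2b'(c) \geq 0$ we have $n(\mathcal{L}) = 1$ by Lemma~\ref{propL}, so $n(\mathcal{L}|_{X_0}) = 1$; if moreover $b'(c) > 0$ then $n(D)=1$ and the formula forces $k_r = k_c = k_i^- = 0$, i.e.\ spectral stability. When $c + 2b'(c) < 0$ we have $n(\mathcal{L}) = 2$, so $n(\mathcal{L}|_{X_0}) = 2$; if $b'(c) > 0$ then $n(D) = 1$ and the formula gives $k_r + 2k_c + 2k_i^- = 1$, forcing $k_r = 1$ with $k_c = 0$ — but this would contradict stability, so one must check carefully that in the regime $b'(c)>0$ one in fact always has $c+2b'(c)\ge 0$, or else reconcile the counts; the resolution is that $b'(c) > 0$ combined with $n(\mathcal{L}|_{X_0})=2$ still yields $k_r=0$ once one observes the negative direction of $\mathcal{L}|_{X_0}$ additional to the one tied to the constraint carries a definite Krein sign (a second application of the index formula, now tracking $k_i^-$). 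Symmetrically, when $b'(c) < 0$, $n(D) = 0$ and $n(\mathcal{L}|_{X_0}) \in \{1,2\}$ yields $k_r + 2k_c + 2k_i^- \in \{1,2\}$ with the nodal/Sturmian information from Proposition~\ref{prop-nodal} ruling out complex quadruplets and extra purely imaginary pairs of negative signature, leaving exactly $k_r = 1$. The main obstacle I anticipate is the delicate Krein-signature bookkeeping distinguishing the $n(\mathcal{L})=1$ and $n(\mathcal{L})=2$ subcases so that the \emph{same} scalar criterion $b'(c)\gtrless 0$ emerges in both — this is exactly the point flagged in Remark~\ref{remark33}, and it will require combining the index formula with the eigenfunction nodal counts of Proposition~\ref{prop-nodal} rather than the index formula alone.
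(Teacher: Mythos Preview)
Your overall strategy --- reduce the spectral problem to $X_0$ where $\partial_x$ is invertible and apply the Hamiltonian index formula with the single momentum constraint $\psi$ --- is legitimate and in fact slightly more economical than the paper's route. The paper instead computes $n(\mathcal{L}|_{\{1,\psi\}^\perp})$ directly from $n(\mathcal{L})$ via the $2\times 2$ matrix $D(0)$ with entries $\langle\mathcal{L}^{-1}\xi_i,\xi_j\rangle$, $\xi\in\{\psi,1\}$, never passing through $n(\mathcal{L}|_{X_0})$ at all; this avoids the very step where your argument goes wrong.

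The concrete error is the passage ``$n(\mathcal{L})=2$, so $n(\mathcal{L}|_{X_0})=2$.'' Restricting to $X_0=\{1\}^\perp$ is itself a codimension-one constraint, and the same counting identity you intend to invoke later (Theorem~4.1 in \cite{Pel-book}) gives
\[
n(\mathcal{L}|_{X_0}) \;=\; n(\mathcal{L}) - n\bigl(\langle\mathcal{L}^{-1}1,1\rangle\bigr),
\qquad \langle\mathcal{L}^{-1}1,1\rangle = \frac{2\pi}{c+2b'(c)}
\]
by (\ref{relder}). Hence when $c+2b'(c)<0$ one negative eigenvalue is removed and $n(\mathcal{L}|_{X_0})=2-1=1$; when $c+2b'(c)>0$ none is removed and $n(\mathcal{L}|_{X_0})=1-0=1$. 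In every case $n(\mathcal{L}|_{X_0})=1$. There is also a sign slip: $\mathcal{L}|_{X_0}(\partial_c\psi)=-\psi$ means $(\mathcal{L}|_{X_0})^{-1}\psi=-\partial_c\psi$, so the constraint scalar is $\langle(\mathcal{L}|_{X_0})^{-1}\psi,\psi\rangle=-\pi b'(c)$, not $+\pi b'(c)$; this is why your ``$n(D)=1$ when $b'(c)<0$'' in one sentence silently became ``$n(D)=1$ when $b'(c)>0$'' two sentences later. With both corrections the index formula reads
\[
k_r+2k_c+2k_i^- \;=\; 1 - n(-\pi b'(c)) - z(-\pi b'(c)),
\]
which equals $0$ when $b'(c)\ge 0$ and $1$ (forcing $k_r=1$, $k_c=k_i^-=0$) when $b'(c)<0$. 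No appeal to Krein signatures or to Proposition~\ref{prop-nodal} is needed, and the apparent contradiction you flag in the regime $c+2b'(c)<0$, $b'(c)>0$ --- a regime that genuinely occurs for $\alpha<\alpha_0$, see Remark~\ref{rem-small} --- simply disappears.
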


\begin{proof}
It is well-known \cite{DK,haragus} that the periodic wave $\psi$ is spectrally stable
if it is a constrained minimizer of energy (\ref{Eu}) under fixed momentum (\ref{Fu}) and mass (\ref{Mu}).
Since $\mathcal{L}$ is the Hessian operator
for $G(u)$ in (\ref{operator}), the spectral stability holds if
\begin{equation}\label{LLpositive}
\mathcal{L} \big|_{\{1,\psi\}^{\bot}} \geq 0.
\end{equation}
On the other hand, the periodic wave $\psi$ is spectrally unstable
with exactly one unstable (real, positive) eigenvalue of $\partial_x \mathcal{L}$ in $L^2_{\rm per}(\mathbb{T})$
if $n \left(\mathcal{L} \big|_{\{1,\psi\}^{\bot}}\right) = 1$.

By Theorem 5.3.2 in \cite{KP} or Theorem 4.1 in \cite{Pel-book},
we construct the following symmetric $2$-by-$2$ matrix related to the two constraints in (\ref{LLpositive}):
$$
D(\lambda) := \left[\begin{array}{cc}\langle (\mathcal{L} - \lambda I)^{-1} \psi,\psi \rangle &
\langle (\mathcal{L} - \lambda I)^{-1}\psi, 1 \rangle \\
\langle (\mathcal{L} - \lambda I)^{-1} 1, \psi \rangle &
\langle (\mathcal{L} - \lambda I)^{-1}1,1\rangle
\end{array}\right], \quad \lambda \notin \sigma(\mathcal{L}).
$$
Assuming $c + 2 b'(c) \neq 0$, we compute at $\lambda = 0$:
\begin{eqnarray*}
\langle \mathcal{L}^{-1}1,1\rangle & = & \frac{2\pi}{c + 2 b'(c)}, \\
\langle \mathcal{L}^{-1} 1, \psi \rangle & = & -\frac{2\pi b'(c)}{c + 2b'(c)}, \\
\langle \mathcal{L}^{-1} \psi, 1 \rangle & = & -\frac{2\pi b'(c)}{c + 2b'(c)}, \\
\langle \mathcal{L}^{-1} \psi, \psi \rangle & = & - \pi b'(c) + \frac{2\pi [b'(c)]^2}{c + 2b'(c)}.
\end{eqnarray*}
Therefore, the determinant of $D(0)$ for $c + 2 b'(c) \neq 0$ is computed as follows:
\begin{equation}
\label{det-D}
\det D(0) = -\frac{2 \pi^2 b'(c)}{c + 2 b'(c)}.
\end{equation}
Denote the number of negative and zero eigenvalues of $D(0)$ by $n_0$ and $z_0$ respectively.
If $c + 2 b'(c) = 0$, then $D(0)$ is singular, in which case denote
the number of diverging eigenvalues of $D(\lambda)$ as $\lambda \to 0$ by $z_{\infty}$.
By Theorem 4.1 in \cite{Pel-book}, we have the following identities:
\begin{equation}\label{identnegLL}
\left\{ \begin{array}{l}
n(\mathcal{L} \big|_{\{1,\psi\}^{\bot}}) = n(\mathcal{L}) - n_0 - z_0, \\
z(\mathcal{L} \big|_{\{1,\psi\}^{\bot}}) = z(\mathcal{L}) + z_0 - z_{\infty}.
\end{array} \right.
\end{equation}
By Lemma \ref{propL}, $n(\mathcal{L})=1$ if $c + 2b'(c) \geq 0$ and $n(\mathcal{L}) = 2$ if $c + 2b'(c) < 0$,
whereas $z(\mathcal{L}) = 1$ if $c + 2 b'(c) \neq 0$ and $z(\mathcal{L}) = 2$ if $c + 2 b'(c) = 0$.

Assume first that $c + 2 b'(c) \neq 0$ so that $z_{\infty} = 0$.
If $b'(c) > 0$, then $z_0 = 0$ whereas $n_0 = 1$ if $c + 2 b'(c) > 0$ and $n_0 = 2$ if $c + 2b'(c) < 0$. In both cases,
it follows from (\ref{identnegLL}) that $n(\mathcal{L} \big|_{\{1,\psi\}^{\bot}}) = 0$ and $z(\mathcal{L} \big|_{\{1,\psi\}^{\bot}}) = 1$
which implies spectral stability of $\psi$.

If $b'(c) = 0$, then $z_0 = 1$ whereas $n_0 = 0$ if $c + 2 b'(c) > 0$ and $n_0 = 1$ if $c + 2 b'(c) < 0$.
In both cases, it follows from (\ref{identnegLL}) that $n(\mathcal{L} \big|_{\{1,\psi\}^{\bot}}) = 0$ and $z(\mathcal{L} \big|_{\{1,\psi\}^{\bot}}) = 2$,
which still implies spectral stability of $\psi$.

If $b'(c) < 0$, then $z_0 = 0$ whereas $n_0 = 0$ if $c + 2 b'(c) > 0$ and $n_0 = 1$ if $c + 2b'(c) < 0$.
In both cases, it follows from (\ref{identnegLL}) that $n(\mathcal{L} \big|_{\{1,\psi\}^{\bot}}) = 1$ and $z(\mathcal{L} \big|_{\{1,\psi\}^{\bot}}) = 1$,
which implies spectral instability of $\psi$.

If $c + 2b'(c) = 0$, then $z_{\infty} = 1$ and $z(\mathcal{L}) = 2$.
Therefore, there is no change in the count compared to the previous cases.
\end{proof}

\begin{corollary}
If $b'(c) \neq 0$, then ${\rm Ker}(\mathcal{L} \big|_{\{1,\psi\}^{\bot}}) = {\rm span}(\partial_x \psi)$,
whereas if $b'(c) = 0$, then there exists $f \in {\rm Ker}(\mathcal{L} \big|_{\{1,\psi\}^{\bot}})$ such that
$\langle f, \partial_x \psi \rangle = 0$ and $f \neq 0$. In the latter case, $\langle f, \psi^2 \rangle \neq 0$
and ${\rm Ker}(\mathcal{L}) = {\rm span}(\partial_x \psi)$.
\end{corollary}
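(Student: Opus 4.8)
The plan is to read the statement off directly from the index computations already performed in the proof of Lemma~\ref{mainT}, supplemented by the range identities (\ref{range-1})--(\ref{range-3}) and by Lemma~\ref{propL}. First I would record that $\partial_x\psi$ lies in ${\rm Ker}(\mathcal{L}\big|_{\{1,\psi\}^{\bot}})$ for every single-lobe wave: indeed $\langle \partial_x\psi, 1\rangle = 0$ by periodicity, $\langle \partial_x\psi, \psi\rangle = \tfrac12\int_{-\pi}^{\pi}(\psi^2)_x\,dx = 0$, and $\mathcal{L}\partial_x\psi = 0$ by translational invariance. Hence $z(\mathcal{L}\big|_{\{1,\psi\}^{\bot}})\geq 1$, and the whole corollary comes down to pinning down this multiplicity via the identity $z(\mathcal{L}\big|_{\{1,\psi\}^{\bot}}) = z(\mathcal{L}) + z_0 - z_{\infty}$ from (\ref{identnegLL}), with $z_0$ and $z_{\infty}$ taken from the case analysis in the proof of Lemma~\ref{mainT} and $z(\mathcal{L})$ from Lemma~\ref{propL}.

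When $b'(c)\neq 0$, that case analysis gives $z_0 = 0$ in every sub-case, together with $(z_{\infty},z(\mathcal{L})) = (0,1)$ if $c + 2b'(c)\neq 0$ and $(z_{\infty},z(\mathcal{L})) = (1,2)$ if $c + 2b'(c) = 0$; either way $z(\mathcal{L}\big|_{\{1,\psi\}^{\bot}}) = 1$, so its kernel is exactly ${\rm span}(\partial_x\psi)$. When $b'(c) = 0$, the same analysis gives $z_0 = 1$, and since then $c + 2b'(c) = c$ (nonzero away from the fold point $c = 0$) one has $(z_{\infty},z(\mathcal{L})) = (0,1)$, so $z(\mathcal{L}\big|_{\{1,\psi\}^{\bot}}) = 2$; picking a nonzero $f$ in this kernel orthogonal to $\partial_x\psi$ settles the existence claim.

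For the two structural claims in the case $b'(c) = 0$, I would work with the explicit choice $f = \partial_c\psi$, which is legitimate since $\langle\partial_c\psi,1\rangle = \partial_c\langle\psi,1\rangle = 0$, $\langle\partial_c\psi,\psi\rangle = \pi b'(c) = 0$, $\mathcal{L}\partial_c\psi = -\psi\in{\rm span}(1,\psi)$ by (\ref{range-3}), $\langle\partial_c\psi,\partial_x\psi\rangle = 0$ (even against odd), and $\partial_c\psi\neq 0$ because $\mathcal{L}\partial_c\psi = -\psi\neq 0$. Using $\gamma'(c) = 3b(c)$ from Remark~\ref{rem-gamma}, I then get $\langle f,\psi^2\rangle = \langle\partial_c\psi,\psi^2\rangle = \tfrac13\partial_c\int_{-\pi}^{\pi}\psi^3\,dx = \tfrac{2\pi}{3}\gamma'(c) = 2\pi b(c)\neq 0$, since $b(c) > 0$ for a non-constant wave. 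Finally, $\mathcal{L}\partial_c\psi = -\psi$ gives $\psi\in{\rm Range}(\mathcal{L})$, hence $\psi^2\in{\rm Range}(\mathcal{L})$ by (\ref{range-1}); combined with $z(\mathcal{L}) = 1$ from Lemma~\ref{propL} (valid because $c + 2b'(c) = c\neq 0$), this yields ${\rm Ker}(\mathcal{L}) = {\rm span}(\partial_x\psi)$. One can also check directly that \emph{any} $f\in{\rm Ker}(\mathcal{L}\big|_{\{1,\psi\}^{\bot}})$ orthogonal to $\partial_x\psi$ satisfies $\langle f,\psi^2\rangle\neq 0$, by writing $\mathcal{L} f = \beta_1 + \beta_2\psi$, pairing (\ref{range-1}) with $f$ and using $\langle 1,\psi\rangle = 0$ to obtain $\langle f,\psi^2\rangle = -\beta_2\|\psi\|_{L^2_{\rm per}}^2$, and excluding $\beta_2 = 0$: if also $\beta_1 = 0$ then $f\in{\rm Ker}(\mathcal{L})$ with $\langle f,\psi\rangle = 0$, contradicting Lemma~\ref{lem-mean-value}, while if $\beta_1\neq 0$ then $1\in{\rm Range}(\mathcal{L})$, forcing $f$ to coincide with $c^{-1}\beta_1(1 - 2\partial_c\psi)$ modulo ${\rm span}(\partial_x\psi)$ by (\ref{relder}) and hence $\langle f,1\rangle = 2\pi\beta_1/c\neq 0$, a contradiction.

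The only genuinely delicate point is making the count of $z_0$ and $z_{\infty}$ match the proof of Lemma~\ref{mainT} verbatim across the borderline $c + 2b'(c) = 0$, and being careful about the doubly-degenerate value $c = 0$ together with $b'(c) = 0$: there the wave sits at a fold point, $z(\mathcal{L}) = 2$, and ${\rm Ker}(\mathcal{L})$ is genuinely two-dimensional, so the last assertion is understood under the tacit restriction $c\neq 0$. Apart from this, no new estimates are needed — the corollary follows entirely from the matrix computations and index identities already in place.
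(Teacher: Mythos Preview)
Your argument is correct and ultimately rests on the same two ingredients as the paper: the index identity $z(\mathcal{L}\big|_{\{1,\psi\}^{\bot}}) = z(\mathcal{L}) + z_0 - z_{\infty}$ from the proof of Lemma~\ref{mainT} to settle the dichotomy in $b'(c)$, and the projection formula $\mathcal{L}\big|_{\{1,\psi\}^{\bot}} f = \mathcal{L} f + \frac{\langle f,\psi^2\rangle}{\langle\psi,\psi\rangle}\psi$ together with Proposition~\ref{prop-kernel} to extract the structural claims. The paper proceeds by taking a generic $f$ in the constrained kernel, ruling out $\langle f,\psi^2\rangle = 0$ via Proposition~\ref{prop-kernel}, and then deducing $b'(c)=0$ from (\ref{range-3}); your route differs only in that you first exhibit the explicit element $f=\partial_c\psi$ (a pleasant and fully legitimate choice, since $\langle\partial_c\psi,\psi\rangle=\pi b'(c)=0$ and $\mathcal{L}\partial_c\psi=-\psi$), and only afterwards redo the generic case.

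Two small comments. First, in your generic argument the decomposition $\mathcal{L} f = \beta_1 + \beta_2\psi$ always has $\beta_1 = 0$: pairing with $1$ and using (\ref{range-2}) gives $2\pi\beta_1 = \langle\mathcal{L} f,1\rangle = \langle f,\mathcal{L} 1\rangle = \langle f,-2\psi+c\rangle = 0$. So your sub-case $\beta_2=0,\;\beta_1\neq 0$ is vacuous (harmless, but removable). Second, your caveat about $c=0$ together with $b'(c)=0$ is well taken; the paper's own proof also tacitly uses $c\neq 0$ when it asserts $\{1,\psi,\psi^2\}\subset\mathrm{Range}(\mathcal{L})$ from $\psi\in\mathrm{Range}(\mathcal{L})$ via (\ref{range-2}), so this is a shared borderline rather than a gap in your proposal.
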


\begin{proof}
It follows from (\ref{range-1}) and (\ref{range-2}) that for every $f \in {\rm dom}(\mathcal{L})$
satisfying $\langle f, 1 \rangle = \langle f, \psi \rangle = 0$, we have
\begin{equation}
\mathcal{L} \big|_{\{1,\psi\}^{\bot}} f = \mathcal{L} f + \frac{\langle f, \psi^2 \rangle}{\langle \psi, \psi \rangle} \psi.
\end{equation}
If $f \in {\rm Ker}(\mathcal{L} \big|_{\{1,\psi\}^{\bot}})$
and $f \neq 0$, then either $\langle f, \psi^2 \rangle = 0$ or $\langle f, \psi^2 \rangle \neq 0$.

If $\langle f, \psi^2 \rangle = 0$, then $f \in {\rm Ker}(\mathcal{L})$ so that ${\rm Ker}(\mathcal{L}) = {\rm span}(\partial_x \psi,f)$
by Corollary \ref{cor-nodal}. Then, $\{1, \psi, \psi^2\} \in \left[ {\rm Ker}(\mathcal{L}) \right]^{\perp} = {\rm Range}(\mathcal{L})$
and Proposition \ref{prop-kernel} yields a contradiction with ${\rm Ker}(\mathcal{L}) = {\rm span}(\partial_x \psi)$.
Hence, $\langle f, \psi^2 \rangle \neq 0$.

If $\langle f, \psi^2 \rangle \neq 0$, then we have $\{1, \psi, \psi^2\} \in {\rm Range}(\mathcal{L})$ so that
${\rm Ker}(\mathcal{L}) = {\rm span}(\partial_x \psi)$ by Proposition \ref{prop-kernel}.
In addition, it follows from (\ref{range-3}) that
$$
0 = \langle f, \mathcal{L} \partial_c \psi \rangle = \langle \mathcal{L} f, \partial_c \psi \rangle =
- \frac{\langle f, \psi^2 \rangle}{\langle \psi, \psi \rangle} \pi b'(c),
$$
hence $b'(c) = 0$. This corresponds to the result $z(\mathcal{L} \big|_{\{1,\psi\}^{\bot}}) = 2$ if $b'(c) = 0$ in
Lemma \ref{mainT}. On the other hand, $z(\mathcal{L} \big|_{\{1,\psi\}^{\bot}}) = 1$ if $b'(c) \neq 0$
in Lemma \ref{mainT} so that ${\rm Ker}(\mathcal{L} \big|_{\{1,\psi\}^{\bot}}) = {\rm span}(\partial_x \psi)$ if $b'(c) \neq 0$.
\end{proof}

\begin{remark}
\label{remark33}
By using (\ref{c-expansion}) and (\ref{b-expansion}), we compute
$$
b'(c) = 2^{\alpha} - 1 + \mathcal{O}(a^2),
$$
which shows that the small-amplitude periodic waves are spectrally stable for small $a$
and $\alpha > 0$ thanks to Lemma \ref{mainT}. Since the fold point in the sense of Definition \ref{def-fold}
exists for $\alpha < \alpha_0$, see Remark \ref{rem-small},
the result of Lemma \ref{mainT} shows spectral stability of the periodic waves
across the fold point as long as $b'(c) > 0$.
\end{remark}

In the rest of this section, we address the possibility that the assumption
${\rm Ker}(\mathcal{L} |_{X_0}) = {\rm span}(\partial_x \psi_0)$ in Lemma \ref{teoexist}
is not satisfied at a particular point $c_0 \in (-1,\infty)$.
The following lemma shows that this case corresponds to the linearized operator $\mathcal{L}$ with
two negative eigenvalues.

\begin{lemma}
\label{corollary-iii}
Assume that for some $c_0 \in (-1,\infty)$ there exists
$f \in {\rm Ker}(\mathcal{L} |_{X_0})$ such that $\langle f, \partial_x \psi_0 \rangle = 0$
and $f \neq 0$. Then, $n(\mathcal{L}) = 2$ and $z(\mathcal{L}) = 1$.
\end{lemma}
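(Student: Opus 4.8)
The plan is to rerun the Krein--Hamiltonian index computation carried out in the proof of Lemma~\ref{propL}, but now feeding in the enlarged kernel of $\mathcal{L}|_{X_0}$ that the hypothesis provides, and to track the fact that the $C^{1}$ continuation of Lemma~\ref{teoexist} (hence equation (\ref{range-3})) is no longer available. Since $f$ satisfies exactly the assumptions of Lemma~\ref{lem-mean-value}, that lemma gives at once ${\rm Ker}(\mathcal{L}) = {\rm span}(\partial_x\psi_0)$, hence $z(\mathcal{L}) = 1$, together with $\langle f,\psi_0\rangle \neq 0$ and $\langle f,\psi_0^2\rangle = 0$. By Corollary~\ref{cor-mean-value} and Corollary~\ref{cor-kernels}, the kernel ${\rm Ker}(\mathcal{L}|_{\{1,\psi_0^2\}^{\perp}}) = {\rm Ker}(\mathcal{L}|_{X_0}) = {\rm span}(\partial_x\psi_0, f)$ is two-dimensional, so $z(\mathcal{L}|_{\{1,\psi_0^2\}^{\perp}}) = 2$, while $n(\mathcal{L}|_{\{1,\psi_0^2\}^{\perp}}) = 0$ by (\ref{lpositive}) because $\psi_0$ remains a constrained minimizer of $\mathcal{B}_{c_0}$ on $Y_0$.

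Next I would form the symmetric $2$-by-$2$ matrix $P(\lambda)$ of Lemma~\ref{propL}. The substitute for equation (\ref{range-3}) is that $\langle f,\psi_0\rangle\neq 0$ together with (\ref{kernel-X-0}) forces $1\in{\rm Range}(\mathcal{L})$, and then (\ref{range-1})--(\ref{range-2}) yield $\psi_0^2\in{\rm Range}(\mathcal{L})$ as well; since $1$ and $\psi_0^2$ are even, they are orthogonal to ${\rm Ker}(\mathcal{L})$, so the entries of $P(\lambda)$ stay bounded as $\lambda\to 0$ and $z_{\infty} = 0$. From (\ref{kernel-X-0}) one has $\mathcal{L}f = -\tfrac{1}{\pi}\langle f,\psi_0\rangle\,1$, hence $\mathcal{L}^{-1}1 = -\tfrac{\pi}{\langle f,\psi_0\rangle}\,f$ modulo ${\rm Ker}(\mathcal{L})$, giving $\langle\mathcal{L}^{-1}1,1\rangle = 0$ and $\langle\mathcal{L}^{-1}1,\psi_0^2\rangle = 0$; and from (\ref{range-1}) one gets $\mathcal{L}^{-1}\psi_0^2 = -\psi_0 + \tfrac{\pi b(c_0)}{\langle f,\psi_0\rangle}\,f$ modulo ${\rm Ker}(\mathcal{L})$, so $\langle\mathcal{L}^{-1}\psi_0^2,\psi_0^2\rangle = -\int_{-\pi}^{\pi}\psi_0^3\,dx < 0$ by (\ref{rel-neg}). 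Hence $P(0)$ is diagonal with entries $-\int_{-\pi}^{\pi}\psi_0^3\,dx$ and $0$, so $n_0 = 1$ and $z_0 = 1$. Substituting $n(\mathcal{L}|_{\{1,\psi_0^2\}^{\perp}}) = 0$, $z(\mathcal{L}|_{\{1,\psi_0^2\}^{\perp}}) = 2$, $n_0 = z_0 = 1$, $z_{\infty} = 0$ into the identities (\ref{identneg}) gives $n(\mathcal{L}) = n_0 + z_0 = 2$ and $z(\mathcal{L}) = 2 - z_0 + z_{\infty} = 1$, the latter reconfirming the value already obtained from Lemma~\ref{lem-mean-value}.

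The main obstacle is essentially one of bookkeeping: one must check that the index identities of Theorem~4.1 in \cite{Pel-book} are being applied with a two-dimensional constraint kernel and outside the smooth-continuation regime, i.e.\ that $1,\psi_0^2\in{\rm Range}(\mathcal{L})\cap{\rm Ker}(\mathcal{L})^{\perp}$ so that $P(0)$ is genuinely finite and $z_\infty=0$. If one prefers to obtain $n(\mathcal{L}) = 2$ without the index formula, the alternative is to exhibit a negative-definite two-dimensional subspace directly: on ${\rm span}(\psi_0, f+\varepsilon)$ the quadratic form $\langle\mathcal{L}\cdot,\cdot\rangle$ has $(1,1)$-entry $-\int_{-\pi}^{\pi}\psi_0^3\,dx<0$, off-diagonal entries $-2\varepsilon\int_{-\pi}^{\pi}\psi_0^2\,dx$, and $(2,2)$-entry $-4\varepsilon\langle f,\psi_0\rangle + \mathcal{O}(\varepsilon^2)$, which is negative definite for $\varepsilon$ small and of the same sign as $\langle f,\psi_0\rangle$ (note $\psi_0$ and $f+\varepsilon$ are linearly independent since $\langle\,\cdot\,,1\rangle$ separates them); combined with $n(\mathcal{L})\le 2$ from Corollary~\ref{cor-nodal}, this again gives $n(\mathcal{L}) = 2$.
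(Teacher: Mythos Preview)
Your proposal is correct and follows essentially the same route as the paper's proof: invoke Lemma~\ref{lem-mean-value} for $z(\mathcal{L})=1$, observe that $1,\psi_0^2\in{\rm Range}(\mathcal{L})$ so that $P(0)$ is well-defined with $z_\infty=0$, compute $P(0)$ to be diagonal with entries $0$ and $-\int\psi_0^3\,dx$, and feed $n_0=z_0=1$ together with $n(\mathcal{L}|_{\{1,\psi_0^2\}^\perp})=0$, $z(\mathcal{L}|_{\{1,\psi_0^2\}^\perp})=2$ into the identities (\ref{identneg}). The paper merely streamlines the preimage bookkeeping by normalizing $f_0:=\pi f/\langle f,\psi_0\rangle$ so that $\mathcal{L}f_0=-1$ directly, but the content is the same; your additional variational alternative (exhibiting a two-dimensional negative subspace spanned by $\psi_0$ and $f+\varepsilon$) is not in the paper and provides a pleasant self-contained backup that avoids the constraint-counting machinery.
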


\begin{proof}
The assertion $z(\mathcal{L}) = 1$ is proven in Lemma \ref{lem-mean-value}.
It follows from (\ref{kernel-X-0}) that $\mathcal{L} f = -\frac{1}{\pi} \langle f, \psi_0 \rangle$
with $\langle f, 1 \rangle = 0$, $\langle f, \psi_0 \rangle \neq 0$, and $\langle f, \psi_0^2 \rangle = 0$.
By normalizing
$$
f_0 := \frac{\pi f}{\langle f, \psi_0 \rangle}
$$
so that $\langle f_0, \psi_0 \rangle = \pi$, we use (\ref{range-1}) and (\ref{range-2}) to write
\begin{equation}
\label{f-0-normalized}
\mathcal{L} f_0 = -1, \quad \mathcal{L} \left( \psi_0 - b(c_0) f_0 \right) = - \psi_0^2, \quad
\mathcal{L} \left( 1 + c_0 f_0 \right) = -2 \psi_0.
\end{equation}
Thanks to the facts $\langle f_0, 1 \rangle = \langle f_0, \psi_0^2 \rangle = 0$, direct computations yield
\begin{eqnarray*}
\langle \mathcal{L}^{-1}1,1\rangle = 0, \quad
\langle \mathcal{L}^{-1} 1, \psi_0^2 \rangle = \langle \mathcal{L}^{-1} \psi_0^2, 1 \rangle = 0, \quad
\langle \mathcal{L}^{-1} \psi_0^2, \psi_0^2 \rangle = - 2\pi \gamma(c_0).
\end{eqnarray*}
Since $\gamma(c_0) > 0$, we have $n_0 = 1$ and $z_0 = 1$ in the proof of Lemma \ref{propL},
so that the identities (\ref{identneg}) yield
\begin{equation}\label{identnegNew}
\left\{ \begin{array}{l}
n(\mathcal{L}) = n(\mathcal{L} \big|_{\{1,\psi_0^2\}^{\bot}}) + n_0 + z_0 = 2, \\
z(\mathcal{L}) = z(\mathcal{L} \big|_{\{1,\psi_0^2\}^{\bot}}) - z_0 = 1,
\end{array} \right.
\end{equation}
where we have used $n(\mathcal{L} \big|_{\{1,\psi_0^2\}^{\bot}}) = 0$ by Theorem \ref{minlem}
and $z(\mathcal{L} \big|_{\{1,\psi_0^2\}^{\bot}}) = 2$ by Corollary \ref{cor-kernels}.
\end{proof}

By Lemma \ref{corollary-iii}, we obtain immediately the following corollary.

\begin{corollary}
\label{cor-condition}
If $n(\mathcal{L}) = 1$, then ${\rm Ker}(\mathcal{L} |_{X_0}) = {\rm span}(\partial_x \psi_0)$.
\end{corollary}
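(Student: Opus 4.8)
The plan is to prove the corollary by contraposition, using Lemma \ref{corollary-iii} as the only substantive input. First I would record the trivial inclusion: since $\psi_0$ is $2\pi$-periodic and non-constant (single-lobe), $\partial_x \psi_0$ is a nonzero function of zero mean, so $\Pi_0 \partial_x \psi_0 = \partial_x \psi_0$; combining the translational identity $\mathcal{L} \partial_x \psi_0 = 0$ with $\langle 1, \mathcal{L} \partial_x \psi_0 \rangle = 0$ gives $\mathcal{L}|_{X_0} \partial_x \psi_0 = \Pi_0 \mathcal{L} \partial_x \psi_0 = 0$. Hence ${\rm span}(\partial_x \psi_0) \subseteq {\rm Ker}(\mathcal{L}|_{X_0})$ always, and it only remains to exclude a strictly larger kernel when $n(\mathcal{L}) = 1$.

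Next I would argue by contradiction. Suppose ${\rm Ker}(\mathcal{L}|_{X_0})$ strictly contains ${\rm span}(\partial_x \psi_0)$, so there is $g \in {\rm Ker}(\mathcal{L}|_{X_0})$ not proportional to $\partial_x \psi_0$. Because $\mathcal{L}|_{X_0}$ is self-adjoint on $X_0$ and $\partial_x \psi_0$ lies in its kernel, the Gram--Schmidt vector
$$
f := g - \frac{\langle g, \partial_x \psi_0 \rangle}{\|\partial_x \psi_0\|_{L^2_{\rm per}}^2}\, \partial_x \psi_0
$$
still belongs to ${\rm Ker}(\mathcal{L}|_{X_0})$, satisfies $\langle f, \partial_x \psi_0 \rangle = 0$, and is nonzero since $g$ and $\partial_x \psi_0$ are linearly independent. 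This $f$ is precisely the object whose existence is hypothesized in Lemma \ref{corollary-iii}, which then forces $n(\mathcal{L}) = 2$, contradicting the assumption $n(\mathcal{L}) = 1$. Therefore no such $g$ exists and ${\rm Ker}(\mathcal{L}|_{X_0}) = {\rm span}(\partial_x \psi_0)$.

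I expect no real obstacle here: once Lemma \ref{corollary-iii} is available the statement is a one-line contrapositive, and the only point requiring (minimal) care is the elementary observation that any surplus kernel vector can be orthogonalized against $\partial_x \psi_0$ without leaving ${\rm Ker}(\mathcal{L}|_{X_0})$, so that the hypothesis of Lemma \ref{corollary-iii} is genuinely activated as soon as the constrained kernel is more than one-dimensional.
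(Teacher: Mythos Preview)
Your argument is correct and is exactly the contrapositive of Lemma \ref{corollary-iii} that the paper has in mind; the paper merely states that the corollary follows ``immediately'' from that lemma without writing out the Gram--Schmidt step or the inclusion ${\rm span}(\partial_x \psi_0) \subseteq {\rm Ker}(\mathcal{L}|_{X_0})$ explicitly. You have simply made these routine details precise.
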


The following lemma shows that the exceptional case in Lemma \ref{corollary-iii} corresponds to
the spectrally unstable periodic wave with the profile $\psi_0$.

\begin{lemma}
\label{lem-unstable-point}
Under the same assumption as in Lemma \ref{corollary-iii}, the periodic wave $\psi_0$ is spectrally unstable
with exactly one unstable (real, positive) eigenvalue of $\partial_x \mathcal{L}$ in $L^2_{\rm per}(\mathcal{T})$.
\end{lemma}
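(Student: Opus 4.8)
The plan is to rerun the $2$-by-$2$ matrix count from the proof of Lemma \ref{mainT}, with the normalized vector $f_0$ produced in Lemma \ref{corollary-iii} playing the role that $\partial_c \psi$ plays when the periodic wave is $C^1$ in $c$. Recall from Lemma \ref{corollary-iii} that in this exceptional case $n(\mathcal{L}) = 2$ and $z(\mathcal{L}) = 1$, that $f_0 = \pi f / \langle f, \psi_0 \rangle$ satisfies $\langle f_0, 1 \rangle = \langle f_0, \psi_0^2 \rangle = 0$ and $\langle f_0, \psi_0 \rangle = \pi$, and that the three range identities (\ref{f-0-normalized}) hold: $\mathcal{L} f_0 = -1$, $\mathcal{L}(\psi_0 - b(c_0) f_0) = -\psi_0^2$, and $\mathcal{L}(1 + c_0 f_0) = -2\psi_0$. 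In particular $\{1, \psi_0, \psi_0^2\} \subset {\rm Range}(\mathcal{L})$, so the matrix $D(\lambda)$ built from the constraints $\{1, \psi_0\}$ exactly as in Lemma \ref{mainT} extends continuously to $\lambda = 0$ with no diverging eigenvalues, that is, $z_{\infty} = 0$.

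First I would evaluate $D(0)$ using $\mathcal{L}^{-1} 1 = -f_0$ and $\mathcal{L}^{-1}\psi_0 = -\frac{1}{2}(1 + c_0 f_0)$, both understood modulo ${\rm Ker}(\mathcal{L}) = {\rm span}(\partial_x \psi_0)$, whose pairings with the even functions $1$ and $\psi_0$ vanish (since $\int_{-\pi}^{\pi}\partial_x\psi_0\,dx = 0$ and $\int_{-\pi}^{\pi}\psi_0\partial_x\psi_0\,dx = 0$). Together with $\langle 1, \psi_0 \rangle = 0$ this gives $\langle \mathcal{L}^{-1} 1, 1 \rangle = 0$, $\langle \mathcal{L}^{-1} 1, \psi_0 \rangle = \langle \mathcal{L}^{-1}\psi_0, 1 \rangle = -\pi$, and $\langle \mathcal{L}^{-1}\psi_0, \psi_0 \rangle = -\frac{\pi c_0}{2}$, hence
\[
D(0) = \left[\begin{array}{cc} -\frac{\pi c_0}{2} & -\pi \\ -\pi & 0 \end{array}\right], \qquad \det D(0) = -\pi^2 < 0 .
\]
Since $\det D(0) < 0$, the matrix $D(0)$ has exactly one negative and one positive eigenvalue, so $n_0 = 1$ and $z_0 = 0$.

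Next I would feed these data into the index identities (\ref{identnegLL}) coming from Theorem 4.1 in \cite{Pel-book}: with $n(\mathcal{L}) = 2$, $z(\mathcal{L}) = 1$, $n_0 = 1$, $z_0 = 0$, $z_{\infty} = 0$ one obtains $n(\mathcal{L} \big|_{\{1,\psi_0\}^{\bot}}) = 2 - 1 - 0 = 1$ and $z(\mathcal{L} \big|_{\{1,\psi_0\}^{\bot}}) = 1 + 0 - 0 = 1$. Finally, just as in the last paragraphs of the proof of Lemma \ref{mainT}, the equality $n(\mathcal{L} \big|_{\{1,\psi_0\}^{\bot}}) = 1$ together with the Krein--Hamiltonian index theory for the constrained spectral problem (\ref{modspecp1}), see \cite{DK,haragus,Pel-book}, yields that $\psi_0$ is spectrally unstable with exactly one unstable (real, positive) eigenvalue of $\partial_x \mathcal{L}$ in $L^2_{\rm per}(\mathbb{T})$.

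I expect the only delicate point to be the verification that $z_{\infty} = 0$, i.e. that losing the $C^1$-in-$c$ family does not make $D(\lambda)$ singular as $\lambda \to 0$; this is secured precisely by the fact that $1$ and $\psi_0$ both lie in ${\rm Range}(\mathcal{L})$ (supplied by (\ref{f-0-normalized})), so that $(\mathcal{L} - \lambda I)^{-1} 1$ and $(\mathcal{L} - \lambda I)^{-1}\psi_0$ remain bounded as $\lambda \to 0$. Everything else is bookkeeping with the counts already established in Lemma \ref{corollary-iii} and the index formula.
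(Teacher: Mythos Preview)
Your proof is correct and follows essentially the same route as the paper: the paper also computes the matrix $D(0)$ from the preimages supplied by (\ref{f-0-normalized}), obtains $\det D(0)=-\pi^2$, and feeds $n_0=1$, $z_0=0$ into the index identities (\ref{identnegLL}) to conclude $n(\mathcal{L}\big|_{\{1,\psi_0\}^{\bot}})=1$. The only extra in the paper is a short preliminary test-vector argument with $\tilde f_0 := f_0 - \psi_0/(2b(c_0))$ showing directly that $\mathcal{L}\big|_{\{1,\psi_0\}^{\bot}}$ is not positive definite, which you skip by going straight to the matrix count.
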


\begin{proof}
Let $f_0$ be the same as in Lemma \ref{corollary-iii} and define
$$
\tilde{f}_0 := f_0 - \frac{\psi_0}{2b(c_0)}.
$$
Then, $\langle \tilde{f}_0,1 \rangle = \langle \tilde{f}_0, \psi_0 \rangle = 0$, and
$$
\langle \mathcal{L} \tilde{f}_0, \tilde{f}_0 \rangle = \frac{\langle \mathcal{L} \psi_0, \psi_0 \rangle}{4 b(c_0)^2} < 0,
$$
thanks to (\ref{rel-neg}). Therefore, $\mathcal{L} |_{\{1,\psi_0\}^{\perp}}$ is not positive definite
and the periodic wave $\psi_0$ is spectrally unstable. Alternatively, one can compute directly
\begin{eqnarray*}
\langle \mathcal{L}^{-1} 1,1\rangle = 0, \quad
\langle \mathcal{L}^{-1} 1, \psi_0 \rangle = \langle \mathcal{L}^{-1} \psi_0^2, 1 \rangle = -\pi, \quad
\langle \mathcal{L}^{-1} \psi_0, \psi_0 \rangle = \frac{\pi c_0}{2},
\end{eqnarray*}
so that we have $n_0 = 1$ and $z_0 = 0$ in the proof of  Lemma \ref{mainT}.
and the identities (\ref{identnegLL}) yield
\begin{equation}
\label{identnegLLNew}
\left\{ \begin{array}{l}
n(\mathcal{L} \big|_{\{1,\psi_0\}^{\bot}}) = n(\mathcal{L}) - n_0 - z_0 = 1, \\
z(\mathcal{L} \big|_{\{1,\psi_0\}^{\bot}}) = z(\mathcal{L}) + z_0 = 1.
\end{array} \right.
\end{equation}
Hence, the periodic wave $\psi_0$ is spectrally unstable
with exactly one unstable (real, positive) eigenvalue of $\partial_x \mathcal{L}$ in $L^2_{\rm per}(\mathcal{T})$.
\end{proof}

Finally, we show that the condition
${\rm Ker}(\mathcal{L} |_{X_0}) = {\rm span}(\partial_x \psi_0)$
for the $C^1$ continuation of the single-lobe periodic wave with profile $\psi_0$ in Lemma \ref{teoexist}
is sharp in the sense that if
${\rm Ker}(\mathcal{L} |_{X_0}) \neq {\rm span}(\partial_x \psi_0)$,
then the mapping (\ref{branch-psi}) is not differentiable at $c_0$,
in particular, $b'(c_0)$ does not exist.

\begin{lemma}
\label{corollary-ii}
Assume ${\rm Ker}(\mathcal{L} |_{X_0}) \neq {\rm span}(\partial_x \psi_0)$.
Then, $\psi(\cdot,c)$ and $b(c)$ are not $C^1$ functions in $c$ at $c_0$.
\end{lemma}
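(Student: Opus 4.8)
The plan is a two-step argument by contradiction, resting on the fact that the failure of $\mathrm{Ker}(\mathcal{L}|_{X_0})=\mathrm{span}(\partial_x\psi_0)$ produces a kernel element of $\mathcal{L}|_{X_0}$ that is not orthogonal to $\psi_0$. First I would extract the structural consequences of the hypothesis. Since $\partial_x\psi_0\in\mathrm{Ker}(\mathcal{L}|_{X_0})$ always holds and this kernel is finite-dimensional (it is the kernel of a self-adjoint operator with compact resolvent; cf.\ Corollary \ref{cor-mean-value}), the assumption yields a nonzero $f_0\in\mathrm{Ker}(\mathcal{L}|_{X_0})$ with $\langle f_0,\partial_x\psi_0\rangle=0$. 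Lemma \ref{lem-mean-value} then provides $\langle f_0,\psi_0\rangle\neq 0$ (normalize $\langle f_0,\psi_0\rangle=\pi$) together with $\mathrm{Ker}(\mathcal{L})=\mathrm{span}(\partial_x\psi_0)$.

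\emph{Step 1: the map $c\mapsto\psi(\cdot,c)$ is not $C^1$ at $c_0$.} Suppose it were; then it is differentiable at $c_0$ with derivative $v:=\partial_c\psi(\cdot,c_0)\in H^\alpha_{\rm per}(\mathbb{T})\cap X_0$. Linearizing the boundary-value problem (\ref{ode-bvp}) at $(\psi_0,c_0)$ --- equivalently, dividing (\ref{IFT}) by $c-c_0$ and passing to the limit, the quadratic term dropping out since $\psi(\cdot,c)-\psi_0=O(|c-c_0|)$ --- gives $\mathcal{L}|_{X_0}v=-\psi_0$. Pairing with $f_0$ and using self-adjointness of $\mathcal{L}|_{X_0}$ on $X_0$ together with $\mathcal{L}|_{X_0}f_0=0$ yields $0=\langle \mathcal{L}|_{X_0}v,f_0\rangle=-\langle\psi_0,f_0\rangle=-\pi$, a contradiction. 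The same computation can be made quantitative without assuming differentiability: pairing the stationary equation (\ref{ode-wave}) for $\psi(\cdot,c)$ with $f_0$ and using $\mathcal{L}f_0=-1$ and $\langle\psi_0^2,f_0\rangle=0$ gives the identity $(c-c_0)\bigl(\pi+\langle\psi(\cdot,c)-\psi_0,f_0\rangle\bigr)=\langle(\psi(\cdot,c)-\psi_0)^2,f_0\rangle$, whence $\|\psi(\cdot,c)-\psi_0\|_{L^2_{\rm per}}\geq C|c-c_0|^{1/2}$ for $c$ near $c_0$; this exhibits the fold-type behavior explicitly.

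\emph{Step 2: $b(c)$ is not $C^1$ at $c_0$.} Suppose $b$ were $C^1$ at $c_0$ and set $b_0:=b(c_0)$. Since $\mathrm{Ker}(\mathcal{L})=\mathrm{span}(\partial_x\psi_0)$ is contained in the subspace of odd functions ($\psi_0$ being even), the self-adjoint operator with compact resolvent $\mathcal{L}=D^\alpha+c_0-2\psi_0$ has trivial kernel on even functions and is therefore invertible from $H^\alpha_{\rm per,\,even}(\mathbb{T})$ onto $L^2_{\rm per,\,even}(\mathbb{T})$. Applying the Implicit Function Theorem to $\Phi(\psi,c,\beta):=D^\alpha\psi+c\psi-\psi^2+\beta$ at $(\psi_0,c_0,b_0)$ on the even subspace produces a $C^1$ map $(c,\beta)\mapsto\Psi(c,\beta)\in H^\alpha_{\rm per,\,even}(\mathbb{T})$ with $\Phi(\Psi(c,\beta),c,\beta)=0$, $\Psi(c_0,b_0)=\psi_0$, and uniqueness in a neighborhood of $\psi_0$. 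The even single-lobe family $\psi(\cdot,c)$ from Theorem \ref{minlem}, Corollary \ref{cor-existence}, and Proposition \ref{regularity} satisfies $\Phi(\psi(\cdot,c),c,b(c))=0$ and, by the compactness argument in the proof of Theorem \ref{minlem} together with Lemma \ref{lemma-continuity}, converges to $\psi_0$ in $H^\alpha_{\rm per}(\mathbb{T})$ as $c\to c_0$; hence local uniqueness forces $\psi(\cdot,c)=\Psi(c,b(c))$ for $c$ near $c_0$. Since $\beta=b(c)$ is $C^1$ in $c$ and $\Psi$ is $C^1$ in $(c,\beta)$, the composition $c\mapsto\psi(\cdot,c)$ would be $C^1$ at $c_0$, contradicting Step 1.

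I expect the main obstacle to lie in Step 2, namely identifying the minimizer-based solution family with the Implicit-Function-Theorem branch near $c_0$: this requires the continuity of $c\mapsto\psi(\cdot,c)$ in $H^\alpha_{\rm per}(\mathbb{T})$ at $c_0$ (from the compactness in Theorem \ref{minlem}, with bootstrapping to $H^\alpha$) and some care about the possible non-uniqueness of even single-lobe minimizers, for which it suffices to track any continuous family of even single-lobe solutions of (\ref{ode-bvp}) through $\psi_0$. Step 1, by contrast, is a short Fredholm obstruction once the kernel structure from Lemma \ref{lem-mean-value} is in hand.
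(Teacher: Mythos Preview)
Your argument is correct and follows a somewhat different route from the paper's. For Step~1 you give a direct Fredholm obstruction: if $c\mapsto\psi(\cdot,c)$ were $C^1$, differentiating (\ref{ode-bvp}) yields $\mathcal{L}|_{X_0}\,\partial_c\psi(\cdot,c_0)=-\psi_0$, and pairing with $f_0\in\mathrm{Ker}(\mathcal{L}|_{X_0})$ contradicts $\langle f_0,\psi_0\rangle\neq 0$. The paper instead passes through the Galilean transformation (\ref{Gal}) to the problem (\ref{galilean1}), applies the Implicit Function Theorem in the parameter $\omega$ (this is where $\mathrm{Ker}(\mathcal{L})=\mathrm{span}(\partial_x\psi_0)$ is used), and then computes $c'(\omega_0)=0$ to exhibit the fold directly; from non-invertibility of $\omega\mapsto c(\omega)$ it reads off both conclusions at once, and the relation $\omega=(c+2b'(c))c'(\omega)$ forces $b'(c_0)$ to blow up. Your Step~2 plays the analogous role but stays in the $(c,\beta)$ coordinates, invoking the IFT on $\Phi(\psi,c,\beta)=0$ on the even subspace. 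Your approach is more economical and never leaves the zero-mean formulation; the paper's approach, by constructing the $\omega$-branch, also produces the explicit fold picture of Remark~\ref{remark21} as a by-product. The point you flag at the end---identifying the minimizer family with the IFT branch near $c_0$---is handled at the same informal level in the paper's proof, which likewise treats the $\omega$-parametrized family and the family $\psi(\cdot,c)$ of the statement as one and the same without further comment.
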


\begin{proof}
Assume ${\rm Ker}(\mathcal{L} |_{X_0}) = {\rm span}(\partial_x \psi_0,f_0)$.
Then, ${\rm Ker}(\mathcal{L}) = {\rm span}(\partial_x \psi_0)$
and $\langle f_0, \psi_0 \rangle \neq 0$ by Lemma \ref{lem-mean-value}.
Hence, equation (\ref{IFT}) cannot be solved by inverting the operator
$\mathcal{L} |_{X_0}$.

By using the Galilean transformation (\ref{Gal}) of Proposition
\ref{proposition-Gal}, let $\varphi_0 \in H^{\alpha}_{\rm per}(\mathbb{T})$ be
an even solution of the normalized equation (\ref{galilean1}) for parameter $\omega_0$,
where $\varphi_0 := \psi_0 - \frac{1}{2} (c_0 - \omega_0)$ and $\omega_0 := \sqrt{c_0^2 + 4b(c_0)}$.
Let $\varphi \in H^{\alpha}_{\rm per}(\mathbb{T})$ be
a solution of the normalized equation (\ref{galilean1}) for $\omega$ near $\omega_0$.
Then, $\tilde{\varphi} := \varphi - \varphi_0 \in H^{\alpha}_{\rm per}(\mathbb{T})$
satisfies the following equation:
\begin{equation}
\label{IFT-normalized}
\mathcal{L} \tilde{\varphi} = -(\omega-\omega_0) (\varphi_0 + \tilde{\varphi}) + \tilde{\varphi}^2,
\end{equation}
where $\mathcal{L}$ is given by (\ref{operator-omega}) at $\varphi_0$ and $\omega_0$. 
(For simplicity of notations, we do not relabel this linearized operator as $\mathcal{L}_0$, 
compared to the proof of Lemma \ref{teoexist}.)

Since ${\rm Ker}(\mathcal{L}) = {\rm span}(\partial_x \psi_0)$,
applying the same argument as in Lemma \ref{teoexist}
yields the existence of the unique $C^1$ mapping $\mathcal{I}_{\omega} \ni \omega \mapsto \tilde{\varphi}(\cdot,\omega) \in
\tilde{B}_r \subset H^{\alpha}_{\rm per}(\mathbb{T})$ such that $\mathcal{I}_{\omega}$ is an open interval containing $\omega_0$
and $\tilde{\varphi}(\cdot,\omega)$ is an even solution to equation (\ref{IFT-normalized}) for every $\omega \in \mathcal{I}_{\omega}$
and $\tilde{\varphi}(\cdot,\omega_0) = 0$. In particular, we have
\begin{equation}
\label{derivative-varphi}
\partial_{\omega} \varphi(\cdot,\omega_0) = - \mathcal{L}^{-1} \varphi_0.
\end{equation}
Hence, $\varphi(\cdot,\omega)$ is an even solution of the boundary-value problem (\ref{galilean1}) for every $\omega \in \mathcal{I}_{\omega}$.

It follows from the transformation formulas
\begin{equation}
\label{transform}
\psi(\cdot,\omega) = \Pi_0 \varphi(\cdot,\omega), \quad
c(\omega) = \omega - \frac{1}{\pi} \int_{-\pi}^{\pi} \varphi dx, \quad
b(\omega) = \frac{1}{4} (\omega^2 - c^2)
\end{equation}
that $\psi(\cdot,\omega)$, $c(\omega)$, and $b(\omega)$ are $C^1$ functions of $\omega$
for every $\omega \in \mathcal{I}_{\omega}$.
It follows from (\ref{Gal}), (\ref{range-2}), and (\ref{derivative-varphi}) that
$$
\mathcal{L} \left( \partial_{\omega} \varphi(\cdot,\omega_0) - \frac{1}{2} \right) = -\frac{\omega_0}{2}, \quad
\Rightarrow \quad
\mathcal{L} \left( \partial_{\omega} \psi(\cdot,\omega_0) - \frac{1}{2} c'(\omega_0) \right) = -\frac{\omega_0}{2}.
$$
Let $f_0 \in {\rm Ker}(\mathcal{L} |_{X_0})$ be normalized from (\ref{f-0-normalized})
so that $\mathcal{L} f_0 = -1$. Therefore, in the subspace of even functions, we have
$$
\partial_{\omega} \psi(\cdot,\omega_0) - \frac{1}{2} c'(\omega_0) = \frac{\omega_0}{2} f_0,
$$
which implies $c'(\omega_0) = 0$ because $\partial_{\omega} \psi(\cdot,\omega_0)$
and $f_0$ are periodic functions with zero mean. Hence,
the $C^1$ mapping $\mathcal{I}_{\omega} \ni \omega \to c(\omega) \in \mathcal{I}_c$
is not invertible. Consequently, $\psi(\cdot,c)$ and $b(c)$ are not $C^1$ functions of $c$ at $c_0$.
In particular,
the relation $\omega = (c + 2 b'(c)) c'(\omega)$ for
$\omega \in \mathcal{I}_{\omega}$ implies that $b'(c_0)$ does not exist.
\end{proof}

\section{Numerical approximations of periodic waves}
\label{sec-numerics}

Here we compute the existence curve for the single-lobe periodic solutions
of the boundary-value problem (\ref{ode-bvp}) on the parameter plane $(c,b)$ for $\alpha \in \left( \frac{1}{3},2 \right]$.

For the integrable BO equation ($\alpha = 1$), the single-lobe periodic solution
to the boundary-value problem (\ref{galilean1}) is known in the exact form:
\begin{equation}
\label{BO-wave}
\omega = \coth \gamma, \quad \varphi(x) = \frac{\sinh \gamma}{\cosh \gamma - \cos x},
\end{equation}
where $\gamma \in (0,\infty)$ is a free parameter of the solution.
Since $\int_0^{\pi} \varphi(x) dx = \pi$, we compute
explicitly $c = \omega - 2$ and $b = \frac{1}{4} (\omega^2 - c^2) = \omega - 1$.
Eliminating $\omega \in (1,\infty)$ yields $b(c) = c + 1$
shown on Fig. \ref{fig:alpha1} (left).

For the integrable KdV equation ($\alpha = 2$), the single-lobe periodic solution
to the boundary-value problem (\ref{galilean1}) is known in the exact form:
\begin{equation}
\label{KDV-wave}
\omega = \frac{4 K(k)^2}{\pi^2} \sqrt{1-k^2+k^4}
\end{equation}
and
\begin{equation}
\varphi(x) = \frac{2 K(k)^2}{\pi^2} \left[ \sqrt{1-k^2 + k^4} + 1 - 2 k^2 + 3 k^2 {\rm cn}^2 \left(\frac{K(k)}{\pi} x; k\right) \right],
\end{equation}
where the elliptic modulus $k \in (0,1)$ is a free parameter of the solution.
Since
$$
\int_0^{\pi} \varphi(x) dx = \frac{2 K(k)^2}{\pi} \left[ \sqrt{1-k^2 + k^4} + 1 - 2 k^2 \right]
+ \frac{6 K(k)}{\pi} \left[ E(k) + (k^2 -1) K(k) \right],
$$
where $K(k)$ and $E(k)$ are complete elliptic integrals of the first and second kinds, respectively,
we compute explicitly
\begin{equation}\label{value-c}
c = \frac{4 K(k)^2}{\pi^2} \left[ 2 - k^2 - \frac{3 E(k)}{K(k)} \right]
\end{equation}
and
\begin{equation}\label{value-b}
b =  \frac{4 K(k)^4}{\pi^4} \left[ -3 (1-k^2) + (2-k^2) \frac{6 E(k)}{K(k)} - \frac{9 E(k)^2}{K(k)^2} \right].
\end{equation}

Fig.\ref{fig:alpha2} (left) shows the existence curve (\ref{value-c}) and (\ref{value-b})
on the parameter plane $(c,b)$. It follows that the function $b(c)$ is monotonically increasing in $c$.
In the limit $k \to 1$, for which $K(k) \to \infty$ and $E(k) \to 1$, we compute from (\ref{value-c}) and (\ref{value-b})
the asymptotic behavior
$$
b(c) \sim \frac{3}{\pi} c^{3/2} \quad \mbox{\rm as} \quad c \to \infty,
$$
which coincides with the behavior of KdV solitons.

\begin{figure}[h!]
\centering
\begin{subfigure}[t]{0.45\textwidth}
	\centering
	\includegraphics[height=6cm,width=7cm]{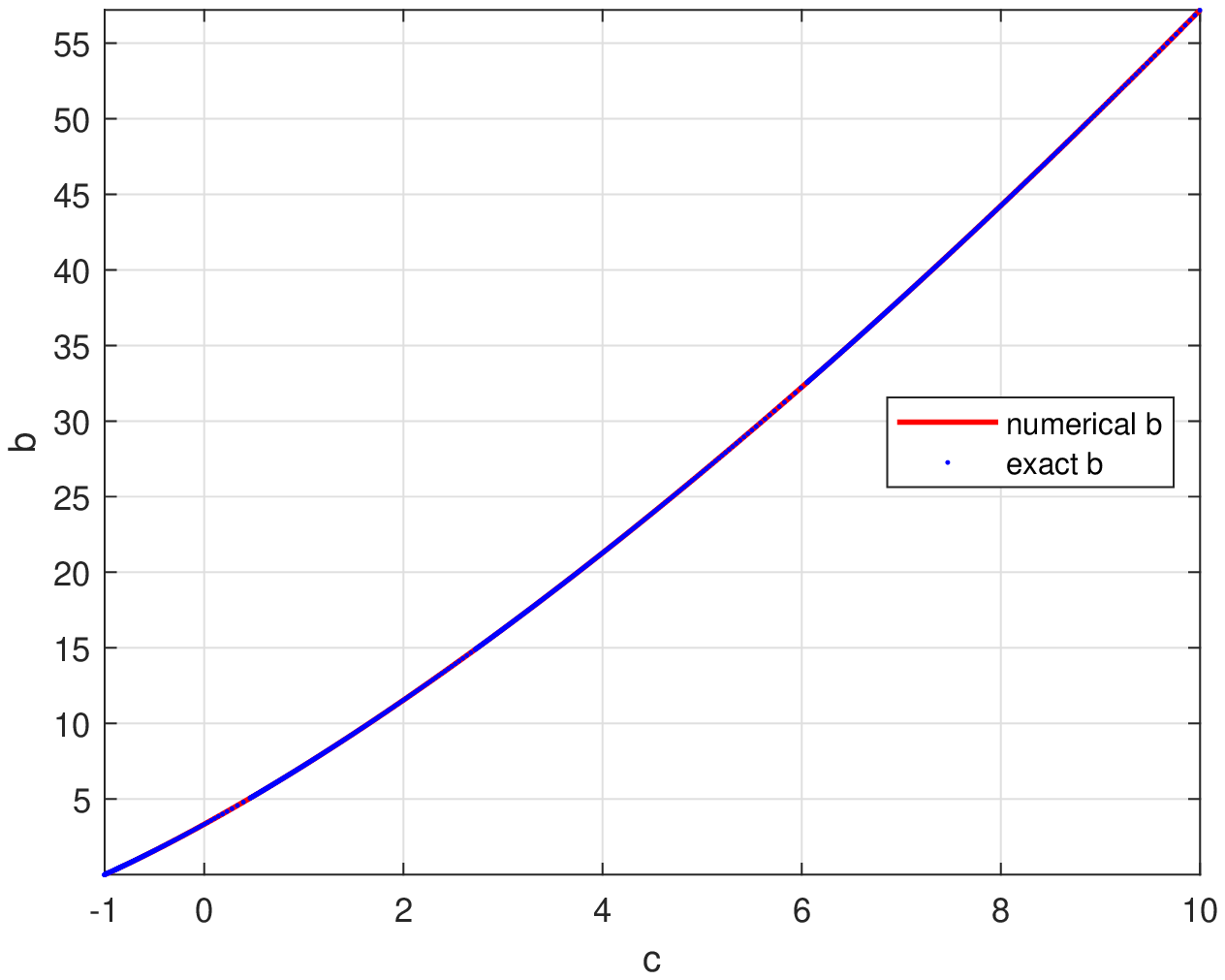}
\end{subfigure}%
\begin{subfigure}[t]{0.45\textwidth}
	\centering
	\includegraphics[height=6cm,width=7cm]{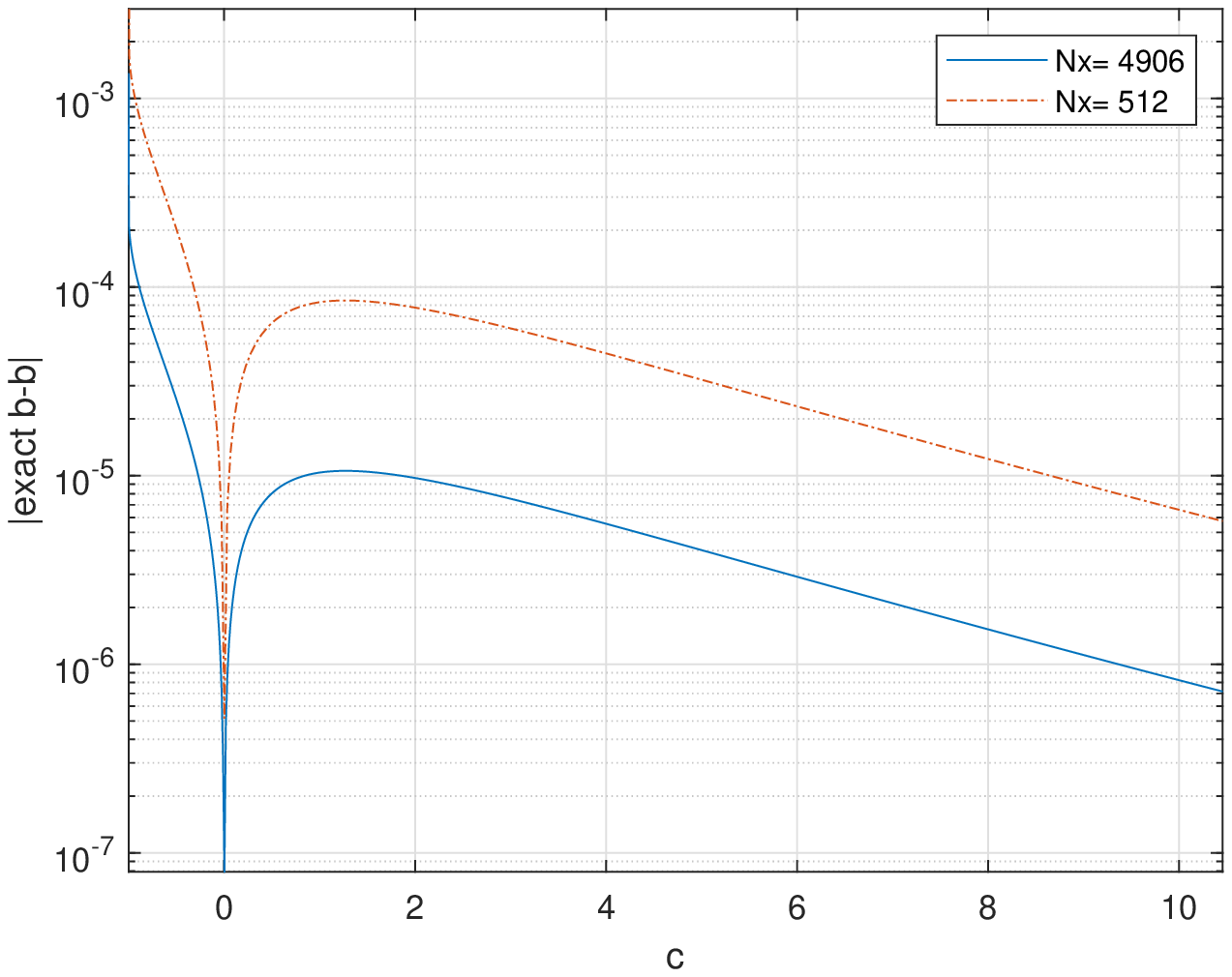}
\end{subfigure}
	\caption{Left: the dependence of $b$ versus $c$ for $\alpha=2$. Right: the difference between
the numerical and exact values of $b$ versus $c$.}
	\label{fig:alpha2}
\end{figure}

The existence curve on the $(c,b)$ plane is also computed numerically
by using the Petviashvili's method from \cite{lepeli} for the stationary equation
\eqref{galilean1} with $\omega \in (1,\infty)$ and applying the transformation formula (\ref{transformation-formula}).
Fig.\ref{fig:alpha2} (left) also shows the numerically obtained existence
curve (invisible from the theoretical curve).
The right panel of Fig.\ref{fig:alpha2} shows the error between the numerical and exact curves
for two computations different by the number $N$ of Fourier modes in the approximation of periodic solutions
(for $N = 512$ by red curve and $N = 4906$ by blue curve). The more Fourier modes are included,
the smaller is the error.

\begin{figure}[h!]
\centering
\begin{subfigure}[t]{0.45\textwidth}
	\centering
	\includegraphics[height=6cm,width=7cm]{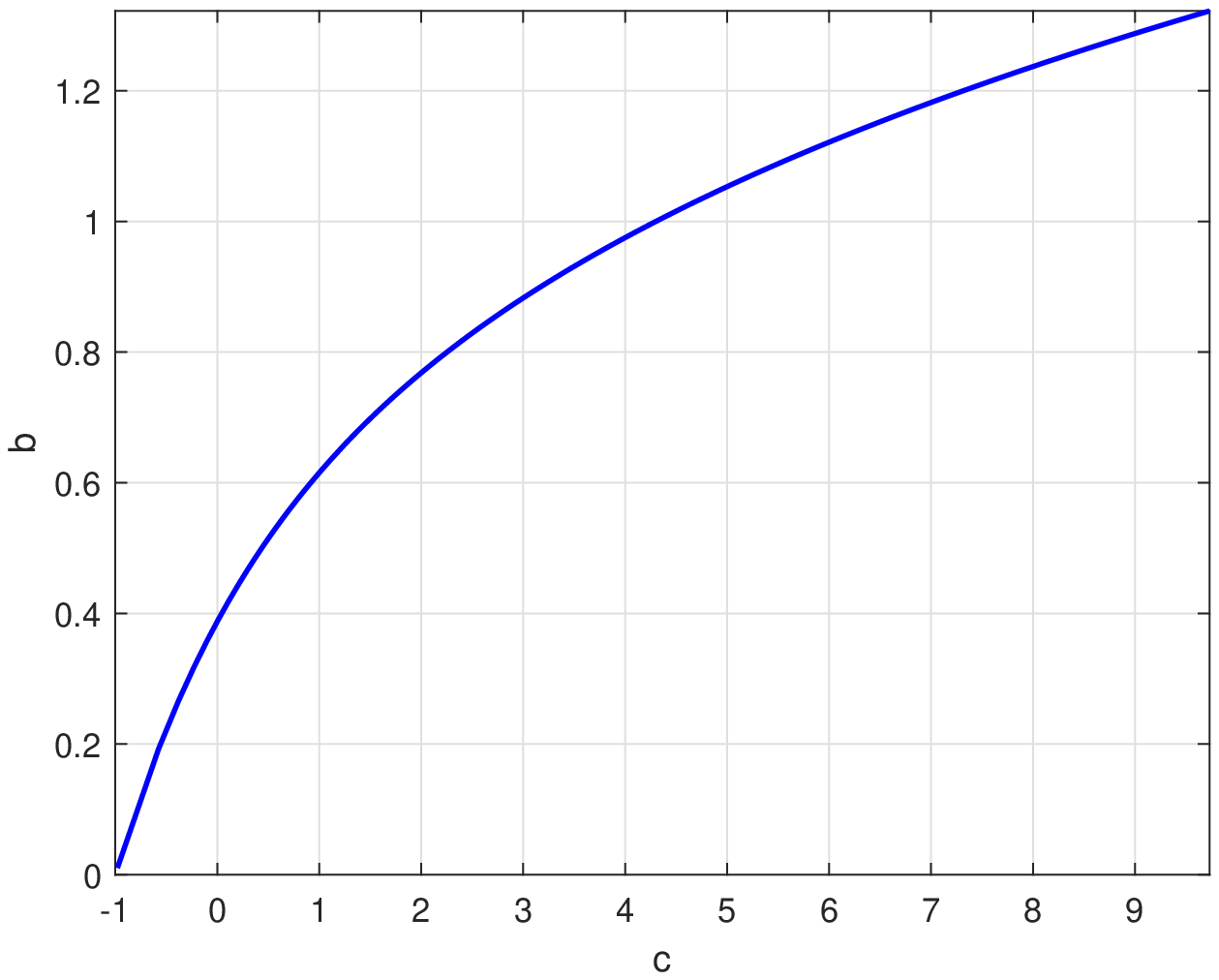}
\end{subfigure}%
\begin{subfigure}[t]{0.45\textwidth}
	\centering
	\includegraphics[height=6cm,width=7cm]{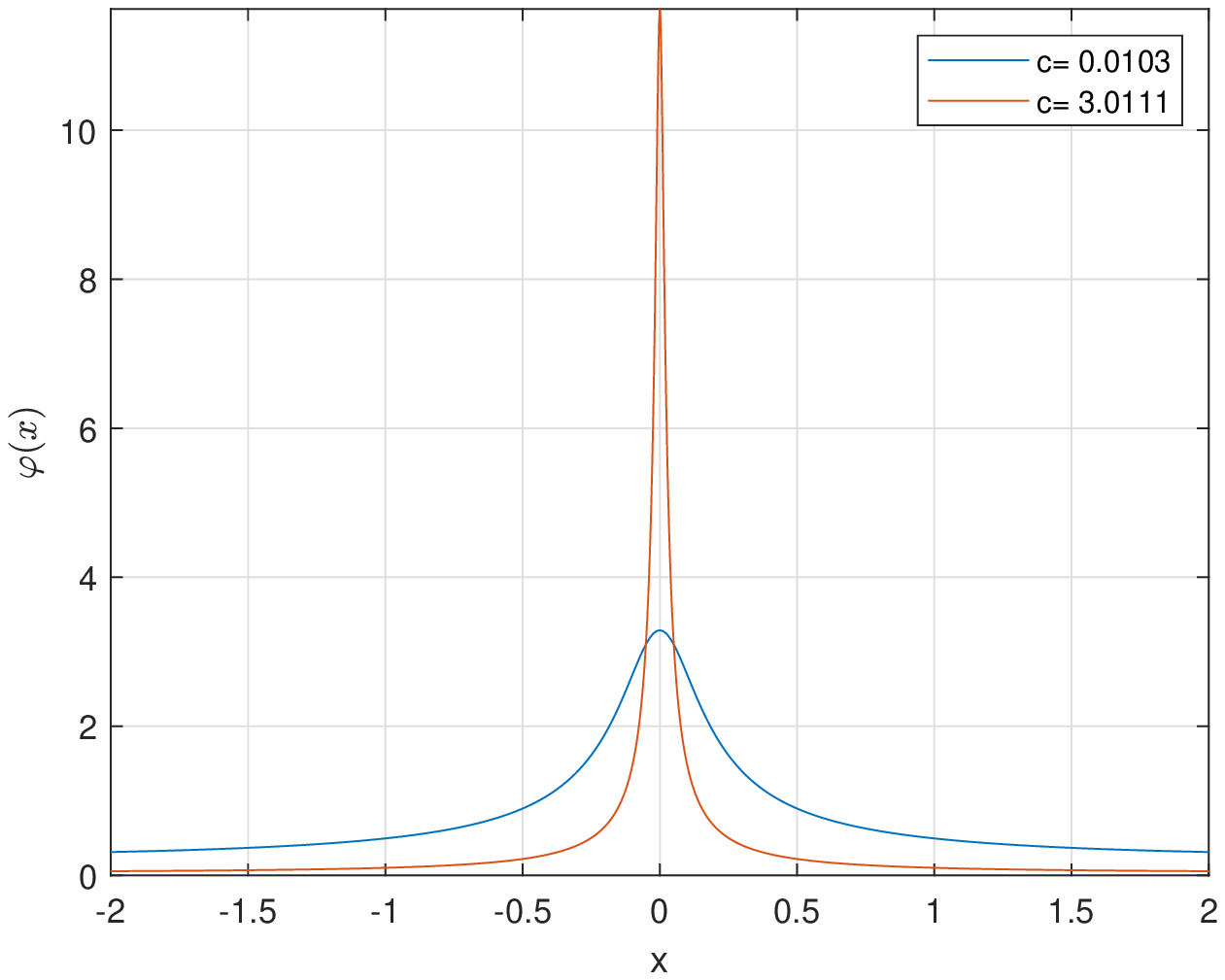}
\end{subfigure}
\caption{Left:  the dependence of $b$ versus $c$ for $\alpha = 0.6$ obtained with the Petviashvili's method.
Right: Profiles of $\varphi$ for two values of $c$.}
\label{fig:alpha06}
\end{figure}

For other values of $\alpha$ in $\left( \frac{1}{3},1 \right)$, we only compute the existence curve
numerically. Fig.\ref{fig:alpha06} shows the existence
curve (left) and two profiles of the numerically computed $\varphi$ in the stationary
equation \eqref{galilean1} (right) in the case $\alpha = 0.6 > \alpha_0$.
The function $b(c)$ is still monotonically increasing in $c$
and the values of $c \in (-1,\infty)$ are obtained monotonically from the values of $\omega \in (1,\infty)$
in the stationary equation (\ref{galilean1}). We also note that the greater is the wave speed $c$,
the larger is the amplitude of the periodic wave and the smaller is its characteristic width.

Fig.\ref{fig:alpha055} (left) shows the existence curve in the case $\alpha = 0.55 < \alpha_0$
computed numerically (blue curve) and by using Stokes expansions (\ref{c-expansion}) and (\ref{b-expansion}) (red curve).
The insert displays the mismatch between the red and blue curves with a small gap.
The reason for mismatch is the lack of numerical data for $c \in (-1,-0.6)$ due to the fold point
discussed in Remarks \ref{remark12}, \ref{remark21}, and \ref{rem-small}. The function $\omega(c)$
is not monotonically increasing near the fold point and there exist two single-humped solutions for
$\omega < 1$. Only the solution with $n(\mathcal{L}) = 1$ can be approximated with
the Petviashvili's method as in \cite{lepeli}, whereas the other solution with $n(\mathcal{L}) = 2$
is unstable in the iterations of the Petviashvili's method which then converge to a constant solution
instead of the single-lobe solution. This is why we augmented the existence curve on Fig. \ref{fig:alpha055} (left) with
the Stokes expansion given by (\ref{c-expansion}) and (\ref{b-expansion}).

\begin{figure}[h!]
\centering
\begin{subfigure}[t]{0.45\textwidth}
\centering
\includegraphics[height=6cm,width=7cm]{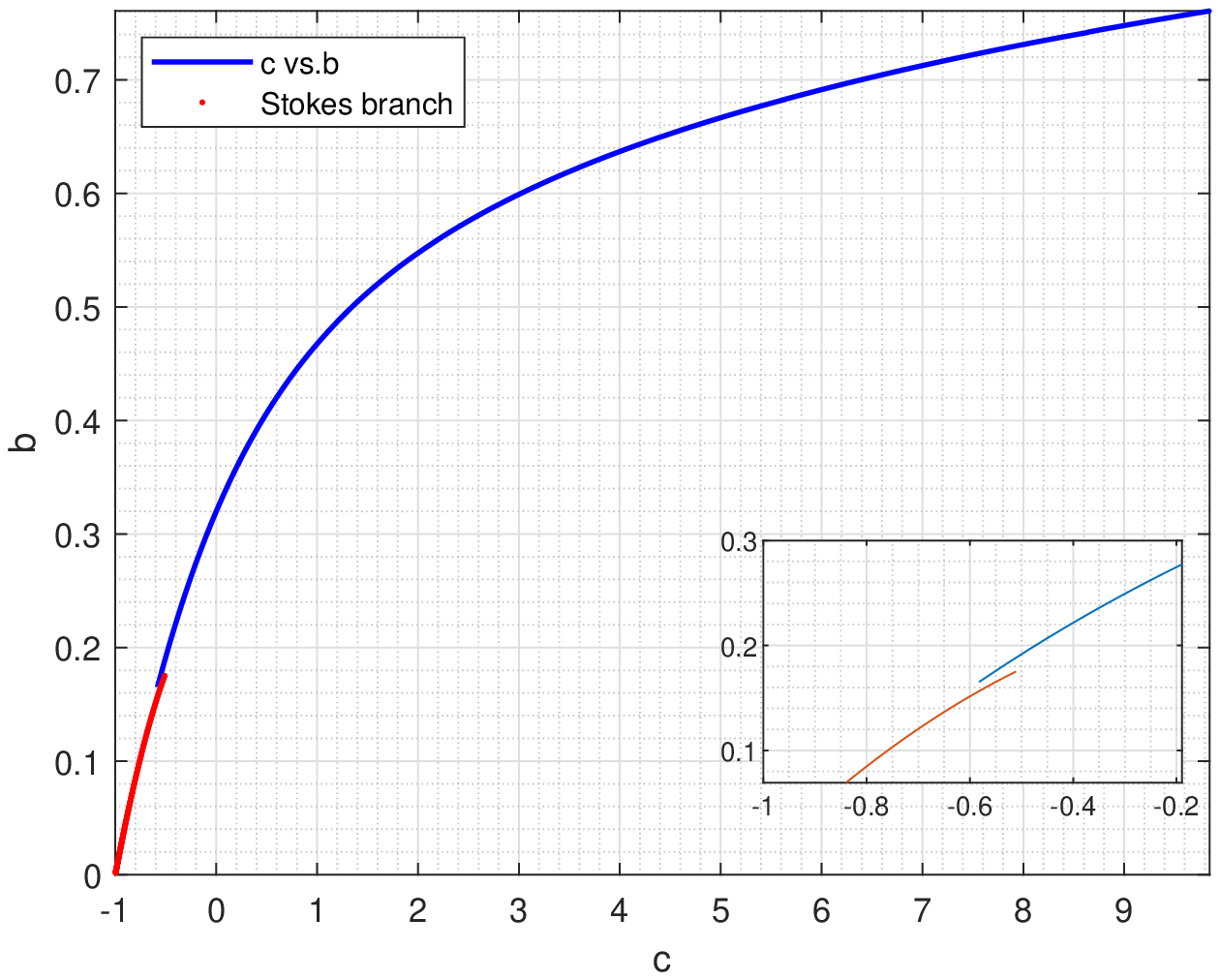}
\end{subfigure}%
\begin{subfigure}[t]{0.45\textwidth}
	\centering
	\includegraphics[height=6cm,width=7cm]{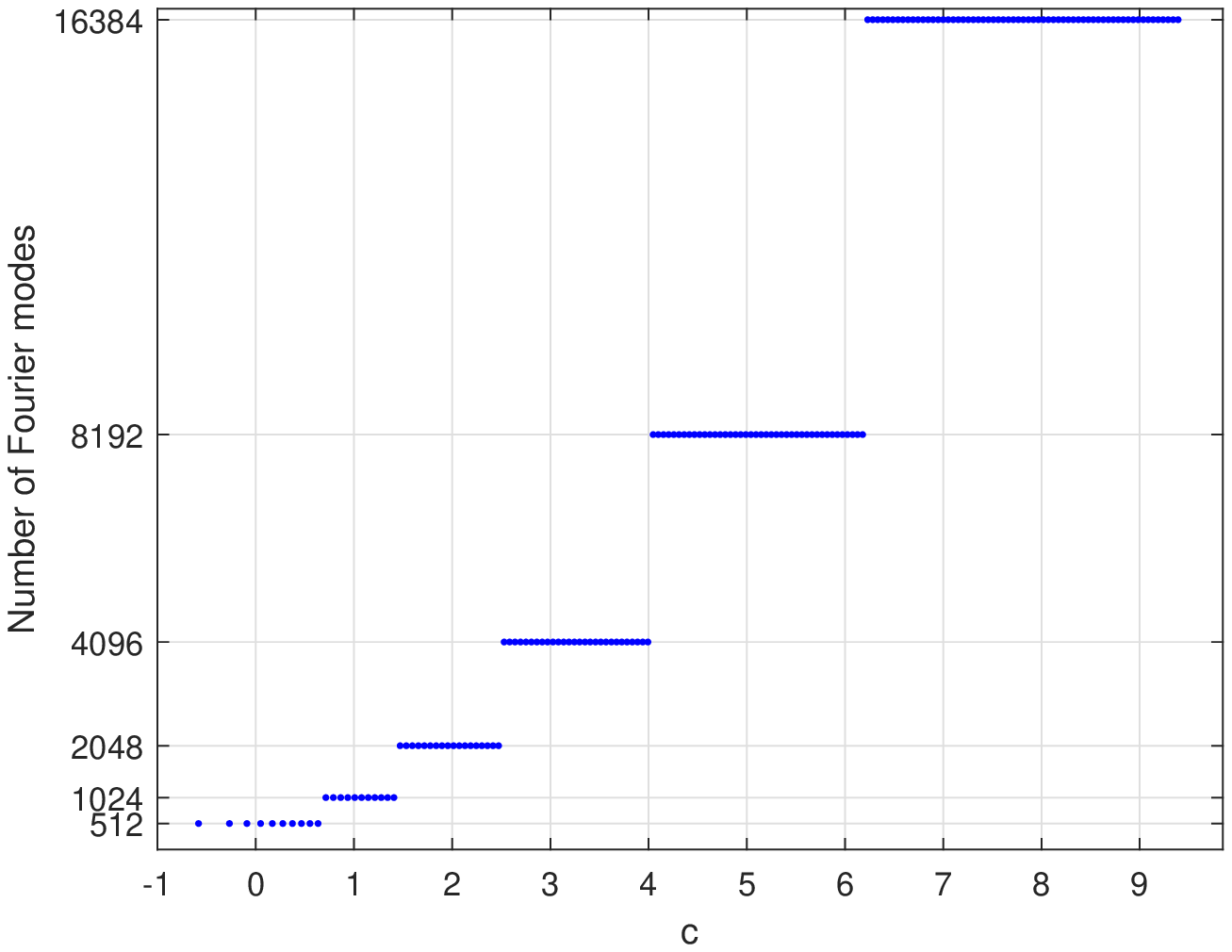}
\end{subfigure}%
\caption{Left: the dependence of $b$ versus $c$ for $\alpha= 0.55$ obtained
with the Petviashvili's method. Right:
The number of Fourier modes versus $c$.}
\label{fig:alpha055}
\end{figure}

The right panel of Fig.\ref{fig:alpha055} shows the number of Fourier modes used in our numerical computations
as the wave speed $c$ increases. We have to increase the number of Fourier modes in order to control the
accuracy of the numerical approximations and to ensure that the strongly compressed solution
with the wave profile $\varphi$ is properly resolved. It follows from the Heisenberg's uncertainty principle
that the narrower is the characteristic width of the wave profile, the weaker is the decay of the Fourier transform at infinity.
We compute the maximum of the Fourier transform at the last ten Fourier modes and increase the number of Fourier
modes every time the maximum becomes bigger than a certain tolerance level of the size $10^{-8}$.
The computational time slows down for larger values of the wave speed, nevertheless, it is clear
that the function $b(c)$ is still monotonically increasing in $c$.

In order to overcome the computational problem seen on Fig.\ref{fig:alpha055} (left),
we have developed the Newton's method for the solutions $\varphi$ to the stationary equation (\ref{galilean1})
near the fold point that exists for $\alpha < \alpha_0$. With the initial guess
from the Stokes expansion in (\ref{wave-expansion}) and (\ref{speed-expansion}), we were able to
find the branch of solutions with $n(\mathcal{L}) = 2$ and connect it with the branch of solutions
with $n(\mathcal{L}) = 1$. As a result, the mismatch seen on the insert of Fig.\ref{fig:alpha055}
for $\alpha = 0.55$ has been eliminated by using the Newton's method (not shown).

Fig.\ref{fig:alpha05-045} shows the existence curve on the parameter plane $(c,b)$ in the cases $\alpha = 0.5$ (left)
and $\alpha = 0.45$ (right) obtained with the Newton's method. It is obvious that
the function $b(c)$ is monotonically increasing in $c$ for $\alpha = 0.5$ and approaches to the
horizontal asymptote as $c \to \infty$, whereas the function $b(c)$ is not monotone in $c$
for $\alpha = 0.45$ and is decreasing for large values of $c$.
This coincides with the conclusion of \cite{A} on the solitary waves which correspond to the limit of $c \to \infty$.

By the stability result of Theorem \ref{theorem-main}, we conjecture based on our numerical results that
the single-lobe periodic waves are spectrally stable for $\alpha \in \left[ \frac{1}{2},2 \right]$
since $b'(c) > 0$ for every $c \in (-1,\infty)$.
On the other hand, for $\alpha \in \left( \frac{1}{3},\frac{1}{2} \right)$, there exists $c_* \in (-1,\infty)$ such that
$b'(c) > 0$ for $c \in (-1,c_*)$ and $b'(c) < 0$ for $c \in (c_*,\infty)$, hence the periodic waves
are spectrally stable for $c \in (-1,c_*)$ and spectrally unstable for $c \in (c_*,\infty)$.

\begin{figure}[h!]
	\centering
	\begin{subfigure}[t]{0.45\textwidth}
		\centering
		\includegraphics[height=6cm,width=7cm]{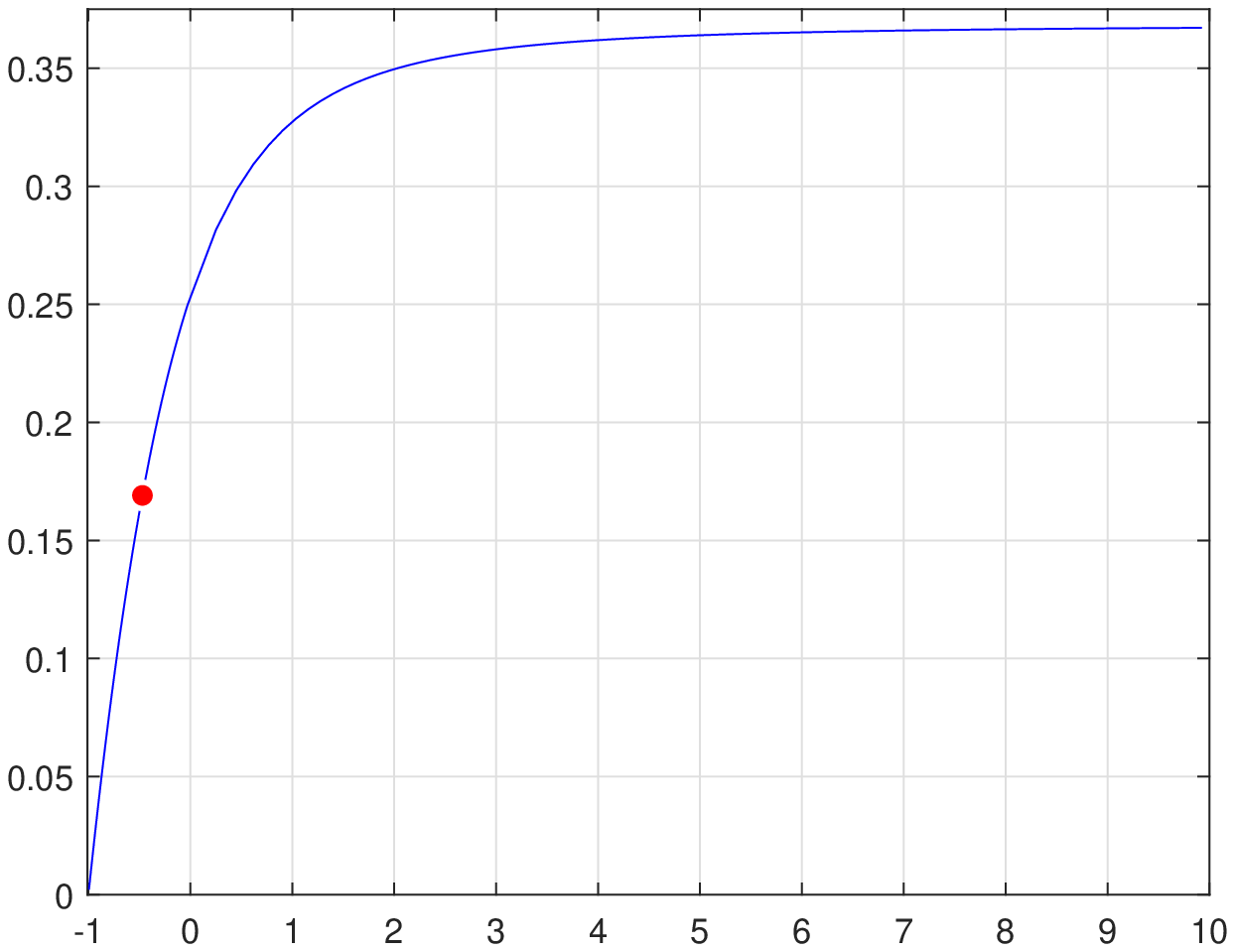}
	\end{subfigure}%
	\begin{subfigure}[t]{0.45\textwidth}
		\centering
		\includegraphics[height=6cm,width=7cm]{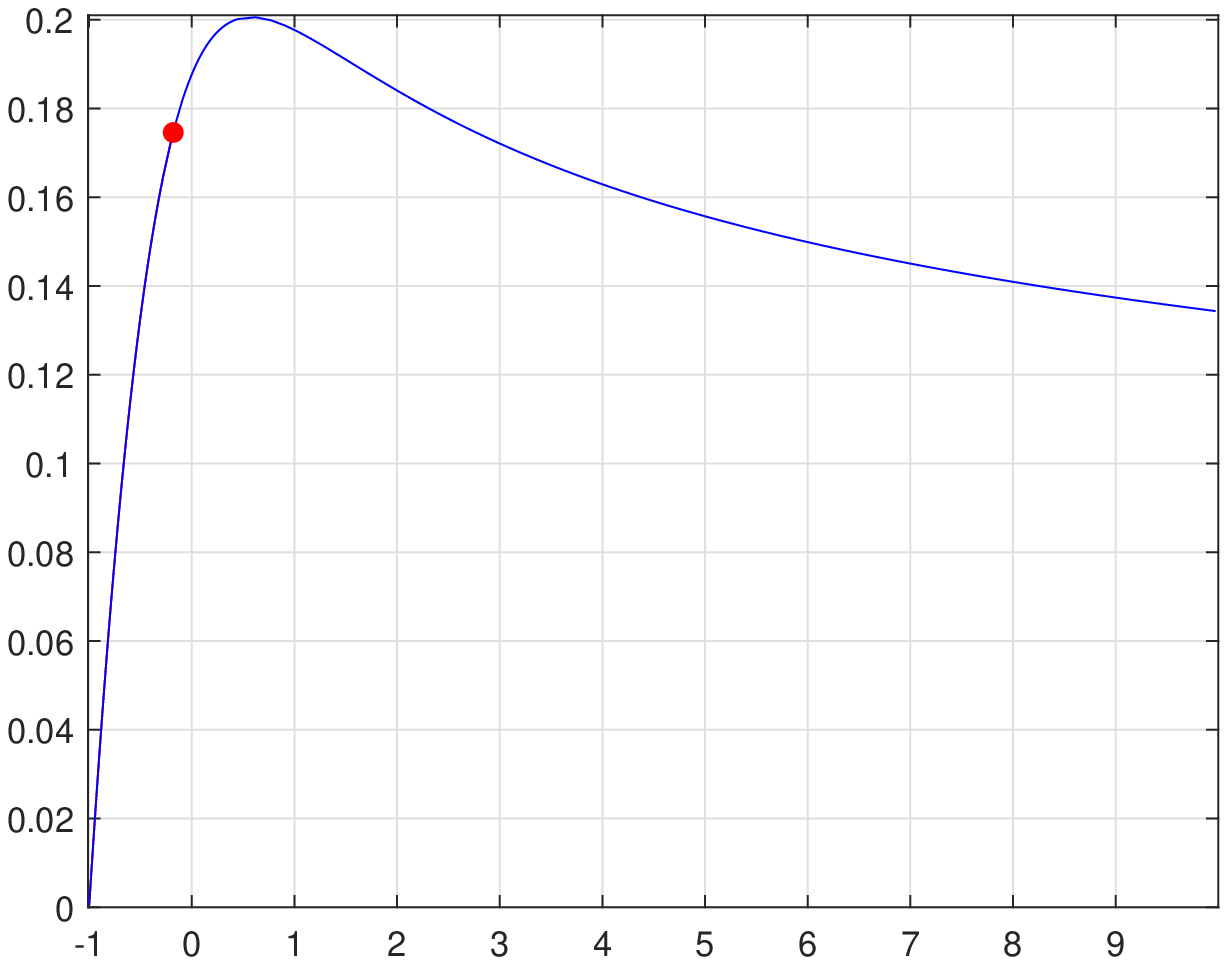}
	\end{subfigure}%
	\caption{The dependence of $b$ versus $c$ for $\alpha= 0.5$ (left) and $\alpha=0.45$ (right)
obtained with the Newton's method.}
	\label{fig:alpha05-045}
\end{figure}

Finally, we reproduce the same results but on the parameter plane $(\omega,\mu)$,
where $\omega$ is the Lagrange multiplier in the boundary-value problem (\ref{galilean1})
and $\mu := \frac{1}{2\pi} \int_{-\pi}^{\pi} \varphi^2 dx$ is the period-normalized momentum computed
at the periodic wave $\varphi$. The parameter plane corresponds to the 
minimization of the energy $E(u)$ subject to the fixed momentum
$F(u)$ with $a = 0$ used in \cite{stefanov}.

The boundary-value problem (\ref{galilean1}) always has the constant solution given by
$\varphi(x) = \omega$ for which $\mu = \omega^2$. As is shown in \cite{lepeli}, the constant solution
is a constrained minimizer of energy for $\mu \in (0,1)$ and is a saddle point of energy for $\mu \in (1,\infty)$.
It is shown by solid black curve for $\mu \in (0,1)$ and by dashed black curve for $\mu \in (1,\infty)$.

For $\alpha = 1$, the exact solution (\ref{BO-wave}) for the single-lobe periodic wave $\varphi$
can be used to compute explicitly
$\mu = \omega$ for $\omega \in (1,\infty)$ shown on Fig. \ref{fig:alpha1} (right) by solid blue curve.
The slope of $\mu$ along the branch for single-lobe periodic waves at $\omega = 1$
can be found directly from the Stokes expansion (\ref{wave-expansion}) and (\ref{c-expansion}) as
$$
\lim_{\omega \searrow 1} \mu'(\omega) = 2 - \frac{1}{2 \omega_2} = \frac{3 \cdot 2^{\alpha} - 5}{2\cdot 2^{\alpha} - 3}.
$$
The slope becomes horizontal at $\alpha = \alpha_* = \frac{\log5 - \log 3}{\log 2} \approx 0.737$, negative for
$\alpha \in (\alpha_0,\alpha_*)$, vertical at $\alpha = \alpha_0 = \frac{\log3 }{\log 2} - 1 \approx 0.585$,
and positive for $\alpha < \alpha_0$. Fig.\ref{fig:Last} shows the bifurcation diagram
on the parameter plane $(\omega,\mu)$ for $\alpha = 0.6$ (left) and $\alpha = 0.5$ (right).

\begin{figure}[h!]
	\centering
	\begin{subfigure}[t]{0.45\textwidth}
		\centering
		\includegraphics[height=6cm,width=7cm]{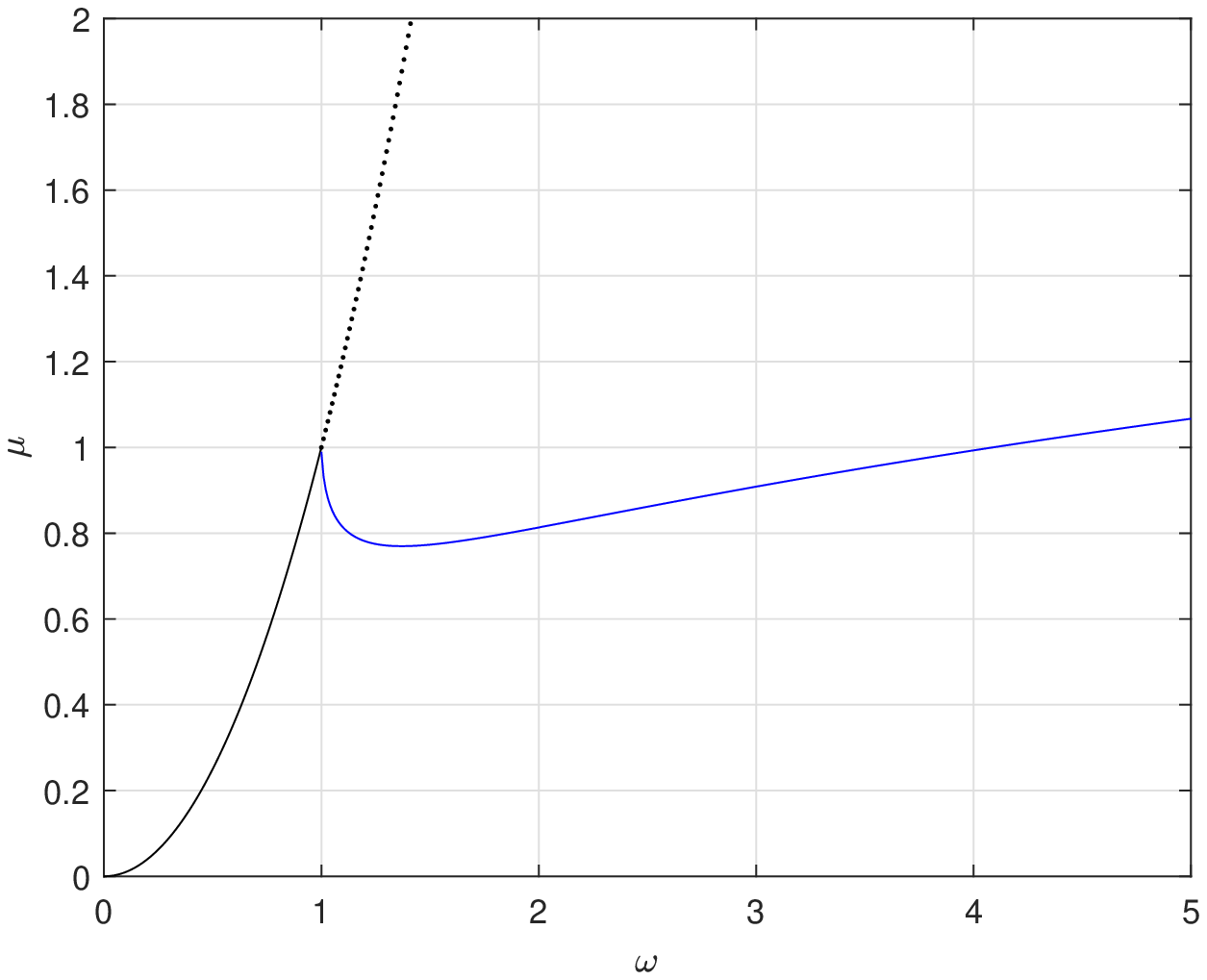}
	\end{subfigure}%
	\begin{subfigure}[t]{0.45\textwidth}
		\centering
		\includegraphics[height=6cm,width=7cm]{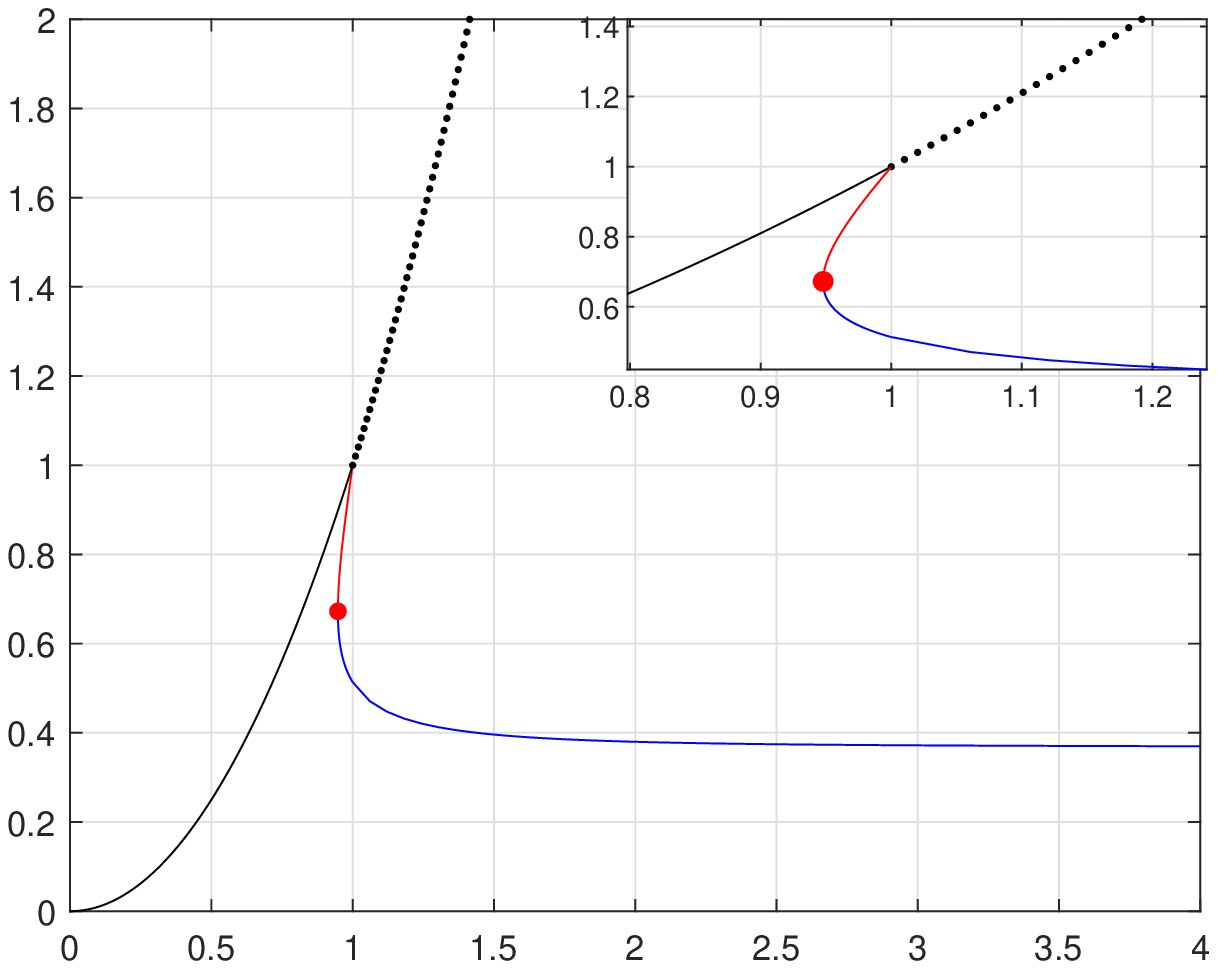}
	\end{subfigure}%
	\caption{The dependence of $\mu$ versus $\omega$ for $\alpha= 0.6$ (left) and $\alpha=0.5$ (right) obtained with
the Newton's method.}
	\label{fig:Last}
\end{figure}

For $\alpha = 0.6$, see Fig. \ref{fig:Last} (left),
two single-lobe periodic waves (blue curve) coexist for the same value of $\mu$ below $1$. The right branch
is a local minimizer of energy $E(u)$ subject to fixed momentum $F(u)$, whereas
the left branch is a saddle point of energy subject to fixed momentum and is
a local minimizer of energy $E(u)$ subject to two constraints of momentum $F(u)$ and mass $M(u)$.
This folded picture is unfolded on Fig. \ref{fig:alpha06} (left), which contains all the single-lobe periodic waves
and none of the constant solutions.

For $\alpha = 0.5$, see Fig. \ref{fig:Last} (right), the folded diagram on the $(\omega,\mu)$ plane
becomes more complicated because two single-lobe periodic waves coexist for $\omega$ below $1$ (red and blue curves)
and two periodic waves coexist for $\mu$ below $1$. The red (blue) curve on Fig. \ref{fig:Last} (right) corresponds
to the part of the curve on Fig. \ref{fig:alpha05-045} (left) below (above) the red point.
Both branches are resolved well by using the Newton's method.
The branch shown by the red curve corresponds to $n(\mathcal{L}) = 2$, nevertheless,
it is a local minimizer of energy $E(u)$ subject to two constraints of momentum $F(u)$ and mass $M(u)$.
At the fold point $\omega_0 \in (0,1)$, the linearized operator $\mathcal{L}$ is degenerate with
$z(\mathcal{L}) = 2$. The branch is continued below the fold point and then to the right with $n(\mathcal{L}) = 1$.
The decreasing and increasing parts of the branch have the same variational characterization as those
on Fig. \ref{fig:Last} (left). The folded picture is again unfolded on Fig. \ref{fig:alpha05-045} (left)
on the parameter plane $(c,b)$, where the scalar condition $b'(c) > 0$ for spectral stability
of the single-lobe periodic waves implies that every point on the folded bifurcation diagram
on the $(\omega,\mu)$ parameter plane correspond to spectrally stable periodic waves.
The fold point on Fig. \ref{fig:Last} (right), where the linearized operator $\mathcal{L}$ is degenerate and the momentum and mass are not
smooth with respect to Lagrange multipliers, appears to be an internal point on the branch on Fig. \ref{fig:alpha05-045} (left)
which remains smooth with respect to the only parameter of the wave speed $c$.

Thus, we conclude that the new variational characterization of the zero-mean single-lobe periodic waves
in the fractional KdV equation (\ref{rDE}) allows us to unfold all the solution branches on the parameter plane $(c,b)$ and
to identify the stable periodic waves using the scalar criterion $b'(c) > 0$.

\vspace{0.5cm}

{\bf Acknowledgements:} The authors thank A. Stefanov for sharing preprint \cite{stefanov}
before publication and for useful comparison between the two different results.
F. Natali is supported by Funda\c{c}\~ao Arauc\'aria and CAPES (visiting professor fellowship).
He would like to express his gratitude to the McMaster University for its hospitality when this work was carried out.
D.E. Pelinovsky acknowledges a financial support from the State task program in the sphere
of scientific activity of Ministry of Education and Science of the Russian Federation
(Task No. 5.5176.2017/8.9) and from the grant of President of Russian Federation
for the leading scientific schools (NSH-2685.2018.5).

\end{document}